\documentclass[10pt,letterpaper,reqno]{amsart}
\usepackage[polutonikogreek, english]{babel}
\usepackage{enumerate}

\usepackage[all]{xy}
\newdir{|>}{!/4.5pt/\dir{|}
*:(1,-.2)\dir^{>}
*:(1,+.2)\dir_{>}}
\newdir{(>}{{}*!/-8pt/\dir{>}}
\newdir^{((}{{}*!/-5pt/\dir^{(}}
\xyoption{2cell}
\xyoption{curve}
\xyoption{rotate}
\UseAllTwocells

\usepackage{bbm}
\usepackage{mathrsfs}
\usepackage{amsfonts}
\usepackage{bm}
\usepackage{amsmath,amssymb,amsthm,stmaryrd}

\usepackage{hyperref}

\usepackage{bussproofs}

\usepackage[numbers, sort&compress]{natbib}
\usepackage{mathbbol}

\swapnumbers

\newcommand{\nocontentsline}[3]{}
\newcommand{\tocless}[2]{\bgroup\let\addcontentsline=\nocontentsline#1*{#2}\egroup}

\newtheorem{teorema}{Theorem}[section]

\newtheorem*{teoA}{Theorem A}
\newtheorem*{teoB}{Theorem B}
\newtheorem*{teoC}{Theorem C}
\newtheorem*{teoD}{Theorem D}
\newtheorem*{teoE}{Theorem E}
\newtheorem{cor}[teorema]{Corollary}

\newtheorem*{que}{Question}

\newtheorem{lema}[teorema]{Lemma}

\newtheorem{prop}[teorema]{Proposition}

\theoremstyle{definition}

\newtheorem{obse}[teorema]{Remark}

\newcommand{\overbar}[1]{\mkern 1.5mu\overline{\mkern-1.5mu#1\mkern-1.5mu}\mkern 1.5mu}

\newcommand{\E}{\mathcal{E}}

\newcommand{\finord}{\mathbf{FinOrd}}
\newcommand{\N}{\mathbb{N}}

\newcommand{\teo}{\mathrm{Th}}

\def\C{\mathscr C}

\newcommand{\subob}{\mathit{\Omega}}

\DeclareMathOperator{\dso}{\Gamma}
\DeclareMathOperator{\dec}{Dec}

\DeclareMathOperator{\ev}{ev}
\DeclareMathOperator{\sh}{Sh}
\newcommand{\set}{\mathbf{Set}}
\DeclareMathOperator{\Fix}{Fix}

\DeclareMathOperator{\Clim}{Colim}

\DeclareMathOperator{\Nat}{Nat}

\newcommand{\Z}{\mathbb{Z}}

\begin{document}
\title{Tininess and right adjoints to exponentials}
\author[Ruiz]{Enrique Ruiz Hern\'andez${}^\dagger$}
\address[$\dagger$]{Centro de Investigaci\'on en Teor\'ia de Categor\'ias y sus Aplicaciones, A.C. M\'e\-xi\-co.}
\curraddr{}
\email{e.ruiz-hernandez@cinvcat.org.mx}

\author[Sol\'orzano]{Pedro Sol\'orzano${}^\star$}
\address[$\star$]{Instituto de Matem\'aticas, Universidad Nacional Aut\'onoma de M\'exico, Oaxaca de Ju\'a\-rez, M\'e\-xi\-co.}
\curraddr{}
\email{pedro.solorzano@matem.unam.mx}
\thanks{($\star$) {SECIHTI-UNAM} Research fellow.}
\date{\today}
\begin{abstract}
Objects $T$ whose exponential functor $(-)^T$ admits a right adjoint $(-)_T$ are known under different names. The fact that they exist, yet that the only set that satisfies this in the category of sets is the singleton made Lawvere suggest they ought to be ``amazingly tiny''---hence Lawvere's acronym ``A.T.O.M.''  

This report explores how intuitively tiny any such object is. Evidences both in favor and to the contrary are produced by looking at their categorical behavior (subobjects, quotients, retracts, etc) when the ambient category is a topos.

The topological behavior (connectedness, contractibility, connected components, etc) of both $T$ and $(-)_T$ is further analyzed in toposes that satisfy certain precohesive conditions over their decidable objects, where this tininess is tested against parts of Lawvere's foundational proposal for Synthetic Differential Geometry. 

\end{abstract}
\subjclass[2020]{Primary 18B25; Secondary 03G30, 03B38}
\keywords{Tininess, amazing right adjoints, precohesion}

\maketitle

\tocless\section{Motivation}
In the traditional models for Synthetic Differential Geometry, the tangent structure is not only represented $(-)^T$ but it is further observed to have an amazing right adjoint $(-)_T$.   Objects $T$ that satisfy this are sometimes called infinitesimal, tiny, or {\em atomic} (e.g. \citet{LawToposLaw}).  The latter name will be used throughout this report. Having certain atomic objects allows for several convenient descriptions of classical objects (see the works by \citet{MR2244115}, \citet{MR1711569}, \citet{MR1083355}, \citet{aM1999}, \citet{MR1930322}, etc.).  

In Lawvere (2011) and earlier, it is suggested that $A^T$ should be isomorphic to $A$ for an object $A$ in an appropriate subcategory of ``discrete spaces''; i.e. spaces of Non becoming.  By reverse engineering, \citet{MR890028} beautifully formalizes this. 

As a natural continuation from the authors' previous work \cite{MR4922622}, this project aims to understand the cohesive properties of an atomic $T$ and of $(-)_T$.  Already in \cite{MR1711569} it is observed that the points of $T$ determine certain enrichments. The atomic objects studied in Synthetic Differential Geometry all have the additional property of being single-pointed, yet there are examples of atomic objects with multiple points.  

A topos $\E$ is precohesive over a topos $\mathcal S$ if there is a string of adjoints 
\[\xymatrix{
\ar@<-3.5ex>@{}[rr]|{\dashv}\ar@<-3.5ex>@{}[rr]|(.15){\dashv}\ar@<-3.5ex>@{}[rr]|(.83){\dashv} & \E\ar@/^.7pc/[d]^{F_\ast}\ar@/_2.5pc/[d]_{F_!} & \\
& \mathcal S\ar@/^.5pc/[u]^{F^\ast}\ar@/_2.5pc/[u]_{F^!} &
}\]
with $F^*$ fully faithful, the counit of $F^*\dashv F_*$ monic, and such that $F_!$ preserves finite products.   

Throughout this report, a {\em McLarty topos} will be a 2-valued topos, where supports split and which is precohesive over a boolean base (see \citet{MR0877866, MR0925615}).  Examples include any model for ETCS (with or without Axiom of Choice) as well as any topos precohesive over the category of sets (See also \citet{MR4928709}). They are naturally the ambient category for models of Synthetic Differential Geometry (SDG, see \cite{MR0925615}) In such case, the precohesive context will be denoted by 
\begin{equation}\label{E:McLarty}
\vcenter{\xymatrix{
\ar@<-3.5ex>@{}[rr]|{\dashv}\ar@<-3.5ex>@{}[rr]|(.26){\dashv}\ar@<-3.5ex>@{}[rr]|(.73){\dashv} & \E\ar@/^.7pc/[d]^{\Gamma}\ar@/_2.5pc/[d]_{\Pi} & \\
& \dec(\E)\ar@/^.7pc/[u]^{\mathcal{I}}\ar@/_2.5pc/[u]_{\Lambda} &
}}
\end{equation}
where $\Pi$ represents connected components (and, for example, an object is connected if its image is terminal), $\mathcal I$ is the inclusion of decidable objects of $\E$ into $\E$, $\Gamma$ is the {\em set} of points (i.e. every global element of $X$ factors through $\mathcal I\Gamma(X)$) and $\Lambda$ is equivalent to the inclusion of $\neg\neg$-sheaves (See \cite{MR0925615} and \cite{MR4928709, MR4922622} for more details).

\

\tocless
\section{Main results}
In a cartesian closed category finite products of atomic objects are atomic and that initial objects are not atomic unless the category is trivial. For any arrow $f:S\to S'$ between atomic objects in a cartesian closed category, one can produce, {\em mutatis mutandis}, a natural transformation 
\begin{equation}\label{E:Subf}
(-)_f:(-)_S\to(-)_{S'},
\end{equation}
in the same way $(-)^f$ is constructed. Similarly, for a given object $Y$ one obtains a functor $Y_{(-)}$ from the full subcategory of atomic objects back into the ambient category.  Any object $Y$ is thus a retract of $Y_T$ for any atomic $T$ with a point---by setting alternatively $S$ and $S'$ equal to $T$ and $1$, from which it follows that $(-)_T$ is faithful.

\begin{teoA}\hypertarget{T:TeorA} Let $\E$ be a topos with an atomic object $T$ with at least one point $p:1\to T$ and let $\varepsilon$ be the counit of $(-)^T\dashv (-)_T$. For any object $X$, the arrow $j_{p, X}:X_T\rightarrow (\subob_T)^X$, given in terms of the internal language as follows

\begin{equation}\label{E:GenSingleton}
(\xi\mapsto (x\mapsto X_p(x)=_T\xi)),
\end{equation}
is a monomorphism making the following diagrams commute:

\begin{equation}
\vcenter{
\xymatrix{X\ar[d]_{X_p}\ar[rr]^{\{-\}_X} && \subob^X\ar[d]^{(\subob_p)^X} \\
X_T\ar[rr]_{j_{p,X}}&& (\subob_T)^X}}
\end{equation}

\begin{equation}
\vcenter{
\xymatrix{(X_T)^T\ar[d]_{\varepsilon_{X}}\ar[rr]^(.4){(j_{p,X})^T} && ((\subob_T)^X)^T\cong ((\subob_T)^T)^X\ar[d]^{(\varepsilon_{\subob})^X} \\
X\ar[rr]_{\{-\}_X}&& \subob^X}}
\end{equation}
wherefrom $\varepsilon_{X}$ factors through $j_{p,X}$ for any point $p:1\to T$.  Moreover, if $S$ is another atomic object and $f:T\to S$, the following diagram commutes. 
\begin{equation}
\vcenter{\xymatrix{X_T\ar[d]_{j_{p,X}}\ar[r]^{X_f} &X_S\ar[d]^{j_{f\circ p,X}} \\
(\subob_T)^X\ar[r]_{(\subob_f)^X}& (\subob_S)^X}}.
\end{equation}
In particular, for $T=1$ and $f=1$, it follows that $j_{1,X}=\{-\}_X$. Because of this, the $j_{p,X}$ can be regarded as generalized singletons. 
\end{teoA}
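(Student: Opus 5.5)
Throughout, write $\widehat{(-)}$ for the transposition bijection $\E(A^T,B)\cong\E(A,B_T)$ given by the adjunction $(-)^T\dashv(-)_T$. The arrow $X_p:X\to X_T$ is the transpose of $X^p:X^T\to X^1\cong X$, i.e.\ of evaluation at $p$. The notation $=_T$ in \eqref{E:GenSingleton} means the equality predicate $\delta_X:X\times X\to\subob$ transported along $(-)_T$. Precisely, since $(-)_T$ is a right adjoint it preserves products, so
\[(\delta_X)_T:(X\times X)_T\cong X_T\times X_T\to \subob_T.\]
Thus $j_{p,X}$ is the exponential transpose of
\[X_T\times X\xrightarrow{1\times X_p}X_T\times X_T\xrightarrow{(\delta_X)_T}\subob_T .\]
My plan is to work entirely with this external description and check each claim by transposing.

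\emph{First square.} I will use naturality of $(-)_p:(-)_T\to(-)_1=\mathrm{id}$, which here goes in the direction $\mathrm{id}\to(-)_T$. Applied to $\delta_X$ it gives $(\delta_X)_T\circ(X\times X)_p=\subob_p\circ\delta_X$. Also, $(X\times X)_p=X_p\times X_p$ under the product isomorphism. Transposing the composite $(\subob_p)^X\circ\{-\}_X$ gives $\subob_p\circ\delta_X$. Transposing $j_{p,X}\circ X_p$ gives $(\delta_X)_T\circ(X_p\times X_p)$. These agree, so the square commutes.

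\emph{Third square.} This uses the same argument with the naturality of $(-)_f:(-)_T\to(-)_S$ applied to $\delta_X$. I also need $X_f\circ X_p=X_{f\circ p}$, which is functoriality of $X_{(-)}$. The case $T=1$, $f=1$ is then immediate, because $X_1=X$ and $(\delta_X)_1=\delta_X$.

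\emph{Second square.} First I identify the right-hand side. The counit $\varepsilon_\subob\circ-$ sends the transpose of $g:Y\to \subob_T$ to $g$'s counterpart $Y^T\to\subob$. Hence going along the top and then down corresponds to the map
\[(X_T)^T\times X\to\subob,\qquad (\xi,x)\mapsto \varepsilon_\subob\bigl(j(\xi)(x)\bigr).\]
I plan to use the description $\varepsilon_B\circ(g_T)^T=g\circ\varepsilon_A$. Together with $\varepsilon$ preserving products (inherited from $(-)^T$ and $(-)_T$ both preserving products), this reduces that map to
\[\delta_X\circ(\varepsilon_X\times \varepsilon_X\circ(X_p)^T).\]
So I need the identity $\varepsilon_X\circ(X_p)^T=$ (the map $X^T$ of constants $\to X$)... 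More carefully, the second factor should be the constant map. I expect $\varepsilon_X\circ (X_p)^T:X^T\to X$ to be $X^{!}$-related, i.e.\ to equal evaluation at $p$ composed appropriately. By the triangle identity and the definition of $X_p$ as the transpose of $\mathrm{ev}_p$, we get $\varepsilon_X\circ(X_p)^T=\mathrm{ev}_p$.

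The parametrized version is subtle, since $x$ enters via the cotensor $((-)^X)^T\cong((-)^T)^X$. Here $X$ is a parameter not exponentiated by $T$, so the $X$ coordinate is carried through the swap as the diagonal/constant $X\to X^T$. Then $\varepsilon$ applied to a constant followed by $X_p$ gives back $x$, via $\mathrm{ev}_p$ of a constant. The final result is $\delta_X(\varepsilon_X\xi,x)$, which is exactly $\{-\}_X\circ\varepsilon_X$. I expect this bookkeeping with the isomorphism $((\subob_T)^X)^T\cong((\subob_T)^T)^X$ and the counit's interaction with the parameter $X$ to be the main obstacle. I would handle it by working over the slice $\E/X$, or equivalently by noting that $(-)^T\dashv(-)_T$ is $\E$-enriched after pulling back along $X\to1$.

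\emph{Monomorphism and factorization.} Faithfulness of $(-)_T$ was noted above, and it reflects monos since it has a left adjoint. I would show $j_{p,X}$ is monic by taking $a,b:Y\to X_T$ with $j a=j b$. Transposing to $\hat a,\hat b:Y^T\to X$, the second square yields $\{-\}\circ\hat a=\{-\}\circ\hat b$ after precomposing with $(\cdot)^T$. Since $\{-\}_X$ is monic, $\hat a=\hat b$ and hence $a=b$. The factorization of $\{-\}_X\circ\varepsilon_X$ through $(\varepsilon_\subob)^X$ is exactly the content of the second square.
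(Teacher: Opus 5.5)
Your proposal is correct in substance. For the two nontrivial claims, monicity and the counit square, it takes a different route from the paper.

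\textbf{How the paper argues.} The paper proves monicity first and independently of the counit square. It constructs $\varphi\colon(\subob^{X^T})_T\cong(\subob_T)^X$ from a chain of natural bijections. It then uses Lemma~\ref{L:commutativediagwithYp} to identify $j_{p,X}=\varphi\circ(\subob^{X^p})_T\circ(\{-\}_X)_T$. Monicity follows because $(-)_T$ preserves monos and $\subob^{X^p}$ is monic ($X^p$ being split epi). The counit square is then a long diagram chase (Lemmas~\ref{L:distdistswapequalswapdistdist} and~\ref{L:swappingwithsigmannames}) resting on that factorization.

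\textbf{How you argue.} You verify the counit square directly by transposing along $(-)\times X\dashv(-)^X$. This needs only naturality of $\varepsilon$ at $\delta_X$, the splitting of $\varepsilon_{X\times X}$ as $\varepsilon_X\times\varepsilon_X$, and $\varepsilon_X\circ(X_p)^T=X^p$ (the defining square of $(-)_p$ with $S=1$). Monicity then comes for free: $j_{p,X}a=j_{p,X}b$ yields $\{-\}_X\circ\varepsilon_X\circ a^T=\{-\}_X\circ\varepsilon_X\circ b^T$, so $a$ and $b$ have equal transposes.

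\textbf{Comparison.} Your route is much shorter and shows that monicity is a formal consequence of the counit square. The paper's route gives an explicit description of $j_{p,X}$ as $(-)_T$ of the ordinary singleton followed by a mono and an isomorphism, which makes the name `generalized singleton' transparent. Your first and third squares follow the paper's argument. For the third, naturality of $(-)_f$ at $\delta_X$ together with $X_f\circ X_p=X_{f\circ p}$ is cleaner than the paper's internal-language sketch.

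\textbf{One correction.} Do not justify the step you flag as the main obstacle by claiming that $(-)^T\dashv(-)_T$ is $\E$-enriched. It is not. An enriched adjunction would give $B^{A^T}\cong(B_T)^A$, and $A=1$ would force $B_T\cong B$ for all $B$. The true identity, used in the paper, is $(B_T)^A\cong(B^{A^T})_T$. Nor is the slice a shortcut, since $X^*(Y_T)\not\cong(X^*Y)_{X^*T}$ in general.

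Nothing of this kind is needed, because the bookkeeping involves only $(-)^T$, which is strong. With $\ev\colon A^X\times X\to A$ and $\ev'\colon(A^T)^X\times X\to A^T$, one has
\[\ev'\circ(\alpha_A\times 1_X)=\ev^T\circ(1\times X^{!})\colon(A^X)^T\times X\to A^T\]
modulo $(A^X\times X)^T\cong(A^X)^T\times X^T$. Both sides are $(u,x)\mapsto(t\mapsto u(t)(x))$, and this is a variant of Lemma~\ref{L:swappingwithsigmannames}.

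This identity turns the transpose of $(\varepsilon_\subob)^X\circ\alpha\circ(j_{p,X})^T$ into $\varepsilon_\subob\circ\bigl((\delta_X)_T\circ(1\times X_p)\bigr)^T\circ(1\times X^{!})$. Only at that point may you invoke naturality of $\varepsilon$, at the honest morphism $\delta_X$. Naturality can never be applied to an $x$-dependent family, which is exactly why $x$ must first be moved into $X^T$ via $X^{!}$.

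\textbf{Minor slips.} The point $p\colon 1\to T$ gives $(-)_p\colon(-)_1\Rightarrow(-)_T$, not the other direction. The remark that $(-)_T$ reflects monos ``since it has a left adjoint'' is confused, and in any case it is unused.
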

Another consequence of the existence of the natural transformations of \eqref{E:Subf} is the following result. Yetter observed that in presheaf categories every representable is atomic, provided that the underlying category has binary products \cite{MR890028}. Madanshekaf further shows that if the underlying category has finite products, every atomic object is a retract of a representable functor \cite{aM1999}. 

\begin{teoB}\hypertarget{T:TeorB} In any topos, retractions of atomic objects are atomic. Furthermore, examples are produced of nonatomic subobjects and quotients of atomic objects. 
\end{teoB}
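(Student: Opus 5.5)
The plan has two parts. First, show abstractly that retracts of atomic objects are atomic. Then produce the counterexamples in a presheaf topos.

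\emph{Retracts.} Let $i:R\to T$ and $r:T\to R$ with $r\circ i=1_R$, and let $T$ be atomic. Exponentiation is contravariant in the exponent, so $(-)^i:(-)^T\Rightarrow(-)^R$ and $(-)^r:(-)^R\Rightarrow(-)^T$ are natural transformations with $(-)^i\circ(-)^r=1$. Hence the functor $(-)^R$ is a retract of $(-)^T$ in the functor category $[\E,\E]$. The functor $(-)^T$ is a left adjoint, so it preserves all colimits that exist. A retract of a colimit-preserving functor again preserves colimits. Indeed, for a diagram $D$ with colimit $\operatorname{colim} D$, the comparison map $\operatorname{colim}(D^R)\to(\operatorname{colim}D)^R$ is a retract, in the arrow category, of the comparison map for $T$, which is an isomorphism. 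A retract of an isomorphism is an isomorphism. Thus $(-)^R$ preserves all small colimits.

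To get an actual right adjoint I would avoid the special adjoint functor theorem and construct $Y_R$ directly as a splitting of an idempotent on $Y_T$. The two natural transformations transpose to $(-)_r:(-)_R\to(-)_T$ style maps, as in \eqref{E:Subf}. Concretely, the idempotent $e=(-)^{i\circ r}$ on $(-)^T$ corresponds under the adjunction to an idempotent $e_Y:Y_T\to Y_T$, natural in $Y$; this is $Y_{i\circ r}$. In a topos idempotents split, so let $Y_R$ be the splitting (equalizer of $e_Y$ and $1$). Then one checks
\[
\E(X^R,Y)\cong\{\,g\in\E(X^T,Y)\mid g\circ X^{i r}=g\,\}\cong\{\,h\in\E(X,Y_T)\mid e_Y h=h\,\}\cong\E(X,Y_R),
\]
naturally in $X$ and $Y$. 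The first bijection sends $g'$ to $g'\circ X^i$, with inverse $g\mapsto g\circ X^r$, using $X^i X^r=1$ and $X^r X^i=X^{ir}$. This verification is routine naturality bookkeeping.

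\emph{Counterexamples.} I would work in presheaves on a category with finite products, where representables are atomic by Yetter's observation. The first candidate is the topos $\set^{\mathbb N^{op}}$, or simpler, $\set^{C^{op}}$ for $C$ the finite-product category generated by an object $D$ with $D\times D$. I would exhibit a representable $y(c)$ with a nonatomic subobject. The simplest choice is $0\rightarrowtail y(c)$: the initial object is never atomic in a nontrivial topos, as noted above. That handles subobjects trivially. The nontrivial quotient example requires more care. I would look for a quotient $Q$ of an atomic object such that $(-)^Q$ fails to preserve some coproduct, for instance $1+1$, and hence has no right adjoint.

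A concrete attempt uses the category of reflexive graphs, or simplicial sets, where $\Delta[1]$ is atomic only in suitable product-having settings. I would rather use $\set^{M}$ for a monoid $M$, or presheaves on a finite-product category with two points. There I would take $T=y(c)$ with two global points and form the coequalizer $Q$ of the two points. I would then test whether $(1+1)^Q\cong 1+1$ fails, by comparing global sections.

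\emph{Main obstacle.} The hardest step is the quotient counterexample: finding a concrete category in which the exponential by the quotient visibly fails to preserve a coproduct or coequalizer. If the identified-points quotient does not work, I would fall back on Madanshekaf's characterization that every atomic object is a retract of a representable. It then suffices to exhibit a quotient of a representable that is not a retract of any representable, for example one whose set of global points is wrong, as can be checked by $\Gamma$.
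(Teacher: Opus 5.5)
Your retract argument is essentially the paper's proof of Theorem~\ref{T:retractoftinytiny}. Both obtain $Y_R$ by splitting the idempotent $Y_{ir}$ on $Y_T$ (the mate of $(-)^{ir}$) and identify $\E(X^R,Y)$ with the maps $X\to Y_T$ that it fixes. The paper splits through the image factorization, which works in any regular cartesian closed category; you split through an equalizer. Your colimit-preservation preamble is not needed. The subobject example $0\rightarrowtail y(c)$ is valid, if trivial. A less trivial one comes from Theorem~\ref{T:tinywithtwopoints}: two distinct points of the atomic $C(2,-)$ are disjoint in that 2-valued topos. They therefore give $1+1\rightarrowtail C(2,-)$, which is disconnected and hence nonatomic by Corollary~\ref{C:tinyimpliesconnected}.

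The genuine gap is the quotient. You produce no example, only a search plan, and the test you propose cannot succeed where you want to apply it. Work in $[C^{op},\mathbf{Set}]$ with $C$ having finite products, and let $Q$ be any quotient of a representable $y(c)$.
\begin{itemize}
\item For each $d$, the presheaf $y(d)\times Q$ is a nonempty quotient of $y(d)\times y(c)\cong y(d\times c)$, because $y(d)\times-$ preserves epimorphisms.
\item Representables are connected, and a nonempty quotient of a connected presheaf is connected, so $y(d)\times Q$ is connected.
\item Hence every map $y(d)\times Q\to A+B$ factors through exactly one summand, i.e.\ $(A+B)^Q\cong A^Q+B^Q$.
\item In particular $(1+1)^Q\cong 1+1$ at every stage, not merely on global sections.
\end{itemize}
So $(-)^Q$ preserves all coproducts, and non-atomicity of such a quotient can only show up on a coequalizer. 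That is what the paper does in Theorem~\ref{T:Nonatomicquotient}. It writes $Q$ as a colimit of representables and shows $C(k,-)^Q\cong C(k,-)$ (Lemma~\ref{L:reptotheQequalsrep} and Remark~\ref{R:reptotheQequalsrep}). Atomicity would then force $Q^Q\cong Q$, contradicting $|Q^Q(n)|=2^{\N}$ against $|Q(n)|=2$ (Lemma~\ref{L:QtotheQcardinality}).

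Your fallback does work, even with your own candidate, but $\Gamma$ alone is too coarse.
\begin{itemize}
\item Take $C=\mathbf{FinSet}$ (skeletal). It has finite products and split idempotents. By Yetter, Madanshekaf and the retract result, the atomic objects of $[C^{op},\mathbf{Set}]$ are then exactly the representables.
\item The coequalizer $Q$ of the two points of $y(2)$ has $|Q(n)|=2^n-1$ for $n\ge 1$.
\item $\Gamma Q=Q(1)$ is a single point, so the only possible representable is $y(1)=1$.
\item But $|Q(2)|=3$, so $Q$ is a nonatomic quotient of the atomic $y(2)$.
\end{itemize}
Global points only narrow down the candidate; a second stage is needed to finish.
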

The proof presented here of \hyperlink{T:TeorB}{Theorem B} explicitly constructs the amazing right adjoint. This result could be compared with the discussion of \citet{MR2177301} (see Proposition 5.25 therein) on small-projective objects, where the preservation of colimits by the exponential functor is seen to be inherited by retracts.

In the context of precohesion, the relation between atomic objects in both toposes is analyzed. In the next result, its first part could be stated more generally for exponential ideals, yet the proof provided for the second part does require locality. 
\begin{teoC}\hypertarget{T:TeorC} Let $\E$ and $\mathcal S$ be toposes.  Let $F:\E\rightarrow\mathcal S$ be precohesive, then
\begin{enumerate}
\item the functor $F^\ast$ reflects atomic objects; and
\item the functor $F_!$ preserves atomic objects.
\end{enumerate}
Furthermore, examples are provided that show that $F_!$ need not reflect atomic objects and that $F_\ast$ may not preserve nor reflect atomicity. 
\end{teoC}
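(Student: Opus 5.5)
Part (1). Suppose $A$ is an object of $\mathcal S$ such that $F^\ast A$ is atomic in $\E$. Since $F^\ast$ is fully faithful and has both adjoints ($F_!\dashv F^\ast\dashv F_\ast$), it preserves finite products and exponentials; the latter is the usual consequence of $F_!$ preserving finite products (Frobenius reciprocity). So for every $B$ in $\mathcal S$ we have $F^\ast(B^A)\cong (F^\ast B)^{F^\ast A}$, and the essential image of $F^\ast$ is an exponential ideal. I will define a candidate right adjoint by
\[
B_A:=F_\ast\bigl((F^\ast B)_{F^\ast A}\bigr)
\]
and check the adjunction directly:
\[
\mathcal S(C^A,B)\cong \E(F^\ast(C^A),F^\ast B)\cong \E((F^\ast C)^{F^\ast A},F^\ast B)\cong \E(F^\ast C,(F^\ast B)_{F^\ast A})\cong \mathcal S(C,F_\ast((F^\ast B)_{F^\ast A})).
\]
The first isomorphism uses full faithfulness of $F^\ast$, the second uses preservation of exponentials, the third uses atomicity of $F^\ast A$, and the last uses $F^\ast\dashv F_\ast$. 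All of these are natural in $C$, so $(-)^A$ has a right adjoint. This part only needs that $F^\ast$ is a fully faithful functor that preserves exponentials and has a right adjoint, which matches the remark that it holds for exponential ideals.

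Part (2). Let $T$ be atomic in $\E$. I want to show that $(-)^{F_!T}$ has a right adjoint in $\mathcal S$. The natural candidate is $B\mapsto F_\ast\bigl((F^\ast B)_T\bigr)$, and the key step is the isomorphism
\[
F^\ast\bigl(B^{F_!T}\bigr)\cong (F^\ast B)^T .
\]
I will verify it on generalized elements. Given $X$ in $\E$,
\[
\E(X,(F^\ast B)^T)\cong \E(X\times T,F^\ast B)\cong \mathcal S(F_!(X\times T),B)\cong \mathcal S(F_!X\times F_!T,B)\cong \mathcal S(F_!X,B^{F_!T})\cong \E(X,F^\ast(B^{F_!T})).
\]
The third isomorphism is exactly where preservation of finite products by $F_!$ enters. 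Yoneda then gives the isomorphism above.

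With that isomorphism in hand, the adjunction follows from
\[
\mathcal S(C^{F_!T},B)\cong \E(F^\ast(C^{F_!T}),F^\ast B)\cong \E((F^\ast C)^T,F^\ast B)\cong \E(F^\ast C,(F^\ast B)_T)\cong \mathcal S(C,F_\ast((F^\ast B)_T)),
\]
using full faithfulness of $F^\ast$ for the first step.

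The main obstacle I expect is the key isomorphism, and specifically its naturality and compatibility with exponentials; this is where I anticipate that the locality/precohesive hypotheses (finite-product preservation of $F_!$ and full faithfulness of $F^\ast$) are genuinely needed. The statement's remark suggests the authors' argument for (2) uses locality more seriously, possibly via the monic counit. If the Yoneda argument above turns out to hide a gap, I would fall back on building the adjunction from the unit and counit, and use the monic counit $F^\ast F_\ast\to 1$ to check the triangle identities.

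For the counterexamples, I would look in a presheaf topos precohesive over $\set$, for instance reflexive graphs or simplicial sets. There $F_!T=1$ is atomic for any connected $T$, while a connected non-atomic $T$ exists (for example a connected graph that is not a retract of a representable, by Madanshekaf's result). This shows $F_!$ does not reflect atomicity. For $F_\ast$, I would use a representable $T$ with several points, so that $\Gamma T$ is a non-singleton set and hence not atomic in $\set$; conversely, I would take a non-atomic $T$ with $\Gamma T=1$.
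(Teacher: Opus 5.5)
Your part (1) is the paper's argument (Theorem \ref{T:ExpIdealsReflect}). Your part (2) is correct but takes a different route from the paper.

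The paper first shows $F_\ast\bigl((F^\ast A)^T\bigr)\cong A^{F_!T}$, using its lemma on reflective--coreflective exponential ideals. It then uses the extra adjunction $F_\ast\dashv F^!$, which gives the right adjoint $B\mapsto F_\ast\bigl((F^!B)_T\bigr)$.

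You instead prove the sharper isomorphism $F^\ast(B^{F_!T})\cong (F^\ast B)^T$ already in $\E$. You then apply full faithfulness of $F^\ast$ to $\mathcal S(C^{F_!T},B)$, which gives $B\mapsto F_\ast\bigl((F^\ast B)_T\bigr)$. Your Yoneda argument has no hidden gap: every bijection in the chain is natural in $X$ and in $B$, so the fallback through the monic counit is unnecessary.

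What your route buys is generality. It never uses $F^!$ or the monic counit, so (2) holds for any connected essential geometric morphism whose $F_!$ preserves binary products. The locality that the paper says its own proof needs is therefore not needed for the result. By uniqueness of adjoints, you also get $F_\ast((F^\ast B)_T)\cong F_\ast((F^!B)_T)$.

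The genuine gap is in your counterexample showing that $F_\ast$ fails to preserve atomicity. In reflexive graphs and in simplicial sets, a representable with several points is \emph{not} atomic.

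To see this, note that in a presheaf topos $(X^T)(c)\cong\E(y(c)\times T,X)$ and colimits are computed pointwise. So if $T$ is atomic, each $y(c)\times T$ is small-projective, hence a retract of a representable. But $y(E)\times y(E)$ has four vertices and $\Delta[1]\times\Delta[1]$ has two nondegenerate $2$-simplices. Both sites are idempotent complete, so neither product is a retract of a representable. In fact $1$ is the only atomic object in either topos, so $\Gamma$ preserves atomicity there trivially.

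You need a precohesive presheaf topos in which some multi-pointed representable is atomic. One way is a site with binary products, where Yetter's observation $X^{y(t)}\cong X\circ(-\times t)$ gives a right adjoint by right Kan extension, together with an object having at least two global elements. For instance, presheaves on nonempty finite sets work, with $\Gamma(y(2))=2$. The paper instead builds a special category $C$ with coproducts with $2$, so that $C(2,-)$ is atomic in $\set^C$ with countably many points (Theorem \ref{T:tinywithtwopoints}).

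The same pointwise argument, taking $c$ to be the terminal object of the site, is also what justifies non-atomicity in your other two examples ($F_!$ not reflecting, $F_\ast$ not reflecting). Madanshekaf's theorem, as quoted in the paper, assumes finite products in the site, and neither the reflexive-graph site nor $\Delta$ has them.
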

In McLarty toposes, both $F^\ast$ and $F^!$ do preserve atomic objects by \hyperlink{T:TeorD}{Theorem D} below for rather trivial reasons. In general, as suggested by \citet[Proposition 3]{MR3288694}, one should not expect $F^!$ to reflect atomic objects, yet an explicit construction within the context of precohesion is not at present available.  

Even in McLarty toposes, both $\Gamma (X^T)$ and $\Pi(X^T)$ can be very different from $\Gamma(X)$ or $\Pi(X)$.  The following result establishes a more rigid behavior for the amazing right adjoint, making it a tad more {\em amazing}. 

\begin{teoD} \hypertarget{T:TeorD}The atomic objects in boolean McLarty toposes are terminal and, therefore, in any McLarty topos atomic objects are connected.  

In any McLarty topos, for any atomic $T$ and for an arbitrary object $Y,$ 
\begin{equation}\label{E:GammasubTandPisubT}
\Gamma(Y_T)\cong\Gamma(Y)\qquad\text{and}\qquad\Pi(Y_T)\cong\Pi(Y).
\end{equation}
\end{teoD}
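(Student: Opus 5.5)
The plan proves the two halves of the first sentence first, and then the two isomorphisms in \eqref{E:GammasubTandPisubT}.

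\emph{Boolean case.} Let $\E$ be a boolean McLarty topos and $T$ atomic. We already know that initial objects are not atomic in a nontrivial cartesian closed category, so $T\not\cong 0$. Since $\E$ is $2$-valued, the support of $T$ is $1$; since supports split, $T$ has a point $p:1\to T$. Now use that $(-)^T$ is a left adjoint, so it preserves the coproduct $1+1$. As $1^T\cong 1$, we get $\subob^T\cong(1+1)^T\cong 1+1\cong\subob$, using that $\E$ is boolean. Global elements of $\subob^T$ are the subobjects of $T$, while $1+1$ has exactly two global elements because $\E$ is $2$-valued. Hence $T$ has only the subobjects $0$ and $T$. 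The image of the point $p$ is a nonzero subobject, so it equals $T$, which forces $T\cong 1$.

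\emph{General case.} For an arbitrary McLarty topos, observe that $\dec(\E)$ is itself a boolean McLarty topos: it is boolean, $2$-valued, has split supports, and is trivially precohesive over itself. By \hyperlink{T:TeorC}{Theorem C}(2), $\Pi T$ is atomic in $\dec(\E)$, so $\Pi T\cong 1$ by the boolean case; that is, $T$ is connected.

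\emph{The isomorphism for $\Gamma$.} The key auxiliary fact is that $(\mathcal I S)^T\cong\mathcal I S$ for every decidable $S$ when $T$ is connected. For any $U$, using that $\Pi$ preserves finite products, we have
\[\E(U,(\mathcal I S)^T)\cong\E(U\times T,\mathcal I S)\cong\dec(\E)(\Pi U\times\Pi T,S)\cong\dec(\E)(\Pi U,S)\cong\E(U,\mathcal I S),\]
naturally in $U$, and Yoneda gives the claim. Then, for decidable $S$,
\[\dec(\E)(S,\Gamma(Y_T))\cong\E(\mathcal I S,Y_T)\cong\E((\mathcal I S)^T,Y)\cong\E(\mathcal I S,Y)\cong\dec(\E)(S,\Gamma Y).\]
By Yoneda in $\dec(\E)$ this yields $\Gamma(Y_T)\cong\Gamma(Y)$. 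The isomorphism is induced by the counit $\varepsilon$ together with the identification $(\mathcal I S)^T\cong\mathcal I S$.

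\emph{The isomorphism for $\Pi$ (main obstacle).} The adjunctions now point the wrong way: $\E(Y_T,\mathcal I S)$ is not directly computable from $(-)^T\dashv(-)_T$. I would first use the retraction already noted in the paper. The composite $Y\xrightarrow{Y_p}Y_T\xrightarrow{Y_!}Y$ is the identity, so $\Pi Y$ is a retract of $\Pi(Y_T)$. It then remains to show that $\Pi(Y_p\circ Y_!)=\Pi(Y_{p\circ !})$ is the identity. My plan is to show that $\Pi(Y_f)$ depends only on the connected component of $f$ in $T^T$. For this I would internalize $f\mapsto Y_f$ to a map $T^T\times Y_T\to Y_T$, obtained by transposing through the adjunction, and then apply $\Pi$. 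Since $\Pi$ preserves products, $T$ is connected, and the points $\mathrm{id}_T$ and $p\circ !$ both lie in one component of $T^T$ (I expect $T^T$ to be connected, e.g.\ via a contraction through $p$), the two induced maps on $\Pi(Y_T)$ coincide.

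If the internalization proves awkward, the fallback is to argue with the maps $j_{p,X}$ of \hyperlink{T:TeorA}{Theorem A} applied to $X=\mathcal I S$. Since $\Omega_T$ receives $\Omega$ as a retract, this would reduce the comparison to the decidable case, where $(\mathcal I S)^T\cong\mathcal I S$ already controls everything.
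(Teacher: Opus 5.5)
Your treatment of the first sentence and of the $\Gamma$-isomorphism is correct. The Boolean case takes a cleaner route than the paper's. The paper transposes to get $\Gamma(2_A)\cong 2$, deduces that there are only four maps $2^A\to 2$, and excludes a second point of $A$ by a counting argument with $2^2$. You instead use that the left adjoint $(-)^T$ preserves $1+1$, so $\Omega^T\cong\Omega$ and $\mathrm{Sub}(T)=\{0,T\}$; this needs only Booleanness, two-valuedness and split supports. Connectedness via Theorem C(2) in $\dec(\E)$, and the $\Gamma$-argument via $(\mathcal I S)^T\cong\mathcal I S$, coincide with the paper's argument.

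The gap is in the $\Pi$-isomorphism. Your internalization is not the problem: transposing $(T^T)^T\times(Y_T)^T\to T^T\times(Y_T)^T\to(Y_T)^T\xrightarrow{\varepsilon}Y$ (restriction along the diagonal, then composition) does give $\mu:T^T\times Y_T\to Y_T$ with $\mu(f,-)=Y_f$ for global $f$. The problem is the claim that $\mathrm{id}_T$ and $p\circ !$ lie in the same connected component of $T^T$. That claim is exactly the statement that $T$ is contractible ($1_T\sim p\,!$, equivalently $\Pi(X^T)\cong\Pi X$ for all $X$). The paper proves contractibility only for Grothendieck toposes precohesive over $\set$. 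In general it only shows contractibility is equivalent to $2_!:2_T\to 2$ being an isomorphism (Theorem E), and it explicitly leaves open whether noncontractible atomic objects exist in McLarty toposes. ``A contraction through $p$'' is precisely what would have to be constructed, so the step is unsupported, and if it worked it would settle that open question.

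The fallback does not repair this. $j_{p,X}$ is a monomorphism out of $X_T$. Neither monomorphisms into $(\Omega_T)^X$ nor information about $(\mathcal IS)_T$ say anything about maps $Y_T\to\mathcal IS$, and those maps are what $\Pi(Y_T)$ measures.

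The missing idea is to combine points with the Booleanness of $\dec(\E)$ and the Nullstellensatz, as the paper does.
\begin{itemize}
\item Since $\Pi(Y_p)$ is split monic in the Boolean topos $\dec(\E)$, we have $\Pi(Y_T)\cong\Pi Y+\bar B$. Pulling back along the unit $Y_T\to\mathcal I\Pi(Y_T)$ splits $Y_T=A+B$, with $A$ lying over $\Pi Y$.
\item Every point $b:1\to Y_T$ factors through $Y_p$. The transpose of $Y_p$ is evaluation at $p$, so $b=Y_p\circ r$, where $r:1\cong 1^T\to Y$ is the transpose of $b$. This is your $\Gamma$-computation made explicit.
\item If $B\neq 0$, the Nullstellensatz gives a point $b$ of $B$. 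By the previous step $b$ lies over $\Pi Y$, so it factors through $A$, contradicting disjointness of $A$ and $B$. Hence $B=0$.
\item Applying $\Pi$, which preserves coproducts and inverts the unit, gives $\bar B\cong\Pi B=0$. So $\Pi(Y_p)$ is an isomorphism with inverse $\Pi(Y_!)$.
\end{itemize}
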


In Lawvere's proposal for a radically synthetic foundation for SDG \cite{MR3288694}, a object $T$ with a unique point $0:1\to T$ is proposed.  The connectedness of the pullback $R$ along $0$ of the evaluation $\ev_T^0:T^T\to T$  (i.e. $\Pi(R)=1$) is argued to be equivalent to 
 \begin{equation}\label{E:RretractSDG}\Pi(X^T)\cong\Pi(X)
 \end{equation}
for an arbitrary object $X$ under the assumption that $R$ be a retract of $T^T$ (see also \citet{MR4749715}).  \citet{MR4922622} observed that, in McLarty toposes, for a given $T$ (not necessarily atomic or with a unique point), \eqref{E:RretractSDG} holds if and only if $T$ is contractible---i.e.,  $T^X$ is connected for any object $X$.  Requiring atomicity would be reasonable as suggested by the following result. 

\begin{teoE} \hypertarget{T:TeorE} Every atomic object is contractible in any Grothendieck topos $\E$ for which the canonical geometric morphism is precohesive over $\set$. 

In a general McLarty topos, an atomic $T$ is contractible if and only if $2_!:2_T\to 2$ is an isomorphism.  
\end{teoE}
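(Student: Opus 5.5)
The plan is to prove the second statement first and then deduce the first from it. Both arguments rest on two facts: the characterization from \cite{MR4922622} that, in a McLarty topos, $T$ is contractible if and only if $\Pi(X^T)\cong\Pi(X)$ naturally in $X$, and the transpositions given by the adjunctions $\Pi\dashv\mathcal I$ and $(-)^T\dashv(-)_T$. By \hyperlink{T:TeorD}{Theorem D}, an atomic $T$ is connected, so it is nonempty. Since supports split, $T$ then has a point $p:1\to T$. Evaluation at $p$ gives a natural map $X^p:X^T\to X$, and hence a natural comparison $\Pi(X^p):\Pi(X^T)\to\Pi(X)$.

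For the second part, we work in $\dec(\E)$, which is a boolean topos in which $2=1+1$ is the subobject classifier. There the functor $\hom(-,2)$ is conservative, because a map $f$ of decidable objects for which $\hom(f,2)$ is bijective is an isomorphism, by comparing complemented subobjects. So $\Pi(X^p)$ is an isomorphism if and only if $\hom(\Pi(X),2)\to\hom(\Pi(X^T),2)$ is bijective. We transpose twice, first along $\Pi\dashv\mathcal I$ and then along $(-)^T\dashv(-)_T$. This identifies the map with $\hom(X,2)\to\hom(X,2_T)$, given by composing with the transpose of $2^p$. That transpose is exactly the section $2_p$ of $2_!:2_T\to 2$ described before \hyperlink{T:TeorA}{Theorem A}. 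By Yoneda, the condition holds for all $X$ if and only if $2_p$ is an isomorphism, equivalently if and only if its retraction $2_!$ is one. Together with the cited characterization of contractibility, this proves the second part. Naturality of all identifications, and independence from the chosen point $p$, follow from the naturality of \eqref{E:Subf}.

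For the first part, we must show that $2_!:2_T\to 2$ is an isomorphism when $\E$ is Grothendieck and precohesive over $\set$. By \hyperlink{T:TeorD}{Theorem D}, $\Gamma(2_T)\cong 2$ and $\Pi(2_T)\cong 2$. We pull $2_T$ back along the two points of $2$ and write $2_T=U_0+U_1$, where each $U_i$ has exactly one point, namely $2_p$ restricted to it, and is connected. It then remains to show that $U_i\cong 1$. Equivalently, every complemented subobject of $X^T$ must be pulled back along $X^p$ from a complemented subobject of $X$.

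Here the Grothendieck hypothesis should enter. Since $(-)^T$ is a left adjoint and $\Pi$ preserves colimits, the comparison $\Pi(X^T)\to\Pi(X)$ commutes with colimits in $X$. It therefore suffices to check it on a generating family, which I would take to consist of connected objects, each having a point because the base is $\set$. For such a generator $G$, the object $G^T$ is again connected, since both projections from $G^T$ have common sections given by constants through points of $G$. The intended argument is that, in a topos precohesive over $\set$, a generator can be chosen so that $G^T$ inherits connectedness from $G$ and $T$ via the product-preservation of $\Pi$.

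This step is where I expect the main obstacle. Showing that the components $U_i$ are terminal requires genuinely Grothendieck input, such as infinitary extensivity together with a colimit presentation by connected pointed generators. The first thing I would try is to prove directly that $X^T$ is connected for connected $X$, by writing $X$ as a colimit of connected generators and controlling how $\Pi$ behaves on the resulting coequalizer diagram.
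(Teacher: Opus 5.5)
Your argument for the second assertion is essentially the paper's. The paper first shows (Theorem \ref{T:Ttiny.TcontractibleiffnAiso}) that contractibility amounts to $A_0$ being invertible for every decidable $A$. It then cuts down to $A=2$ by a unique-extension argument in the Boolean topos $\dec(\E)$. Your transposition of $\dec(\E)(\Pi(X^p),2)$ into composition with $2_p$, followed by Yoneda in $X$, packages the same ingredients more directly. One caveat: $\dec(\E)(-,2)$ is not conservative merely because $\dec(\E)$ is Boolean. For a nontrivial group $G$, $\set^G(-,2)$ sends $G\to 1$ to a bijection. What you actually need is that $\Pi(X^p)$ is split epi with section $\Pi(X^!)$. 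Bijectivity then forces the complement of $\Pi X$ in $\Pi(X^T)$ to admit only one map to $2$, hence to be $0$, which is exactly the paper's final step.

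The genuine gap is in the first assertion, and it cannot be filled, because that assertion is false as stated. Your reduction to a generating family is legitimate, since $X\mapsto\Pi(X^T)$ and $\Pi$ both preserve colimits. The key claim, that $G^T$ is connected for a connected pointed generator $G$, is unsupported: producing points of $G^T$ does not connect them, and $\Pi$ preserves products, not exponentials. In fact the claim fails.

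Let $C$ be a skeleton of finite-dimensional $\mathbb{F}_2$-vector spaces and $\E=[C^{op},\set]$. Because $0$ is a zero object, $\Pi X=\Clim_{C^{op}}X=X(0)=\Gamma X$, and $\Pi$ therefore preserves finite products. The counit $X(0)\to X(a)$ is split monic, and any two objects are joined by zero maps. So the canonical geometric morphism is precohesive and $\E$ is two-valued with split supports, i.e. a Grothendieck McLarty topos. By \cite{MR890028}, $T=y(\mathbb{F}_2)$ is atomic, with $X^T=X((-)\oplus\mathbb{F}_2)$. However, $\Pi(T^T)=T^T(0)\cong C(\mathbb{F}_2,\mathbb{F}_2)$ has two elements: the components of $1_T$ and of the constant map at the unique point of $T$. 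Since $\Pi T=1$, $T$ is not contractible. Consistently with the second assertion, $2_T(\mathbb{F}_2)\cong 2^{C(\mathbb{F}_2,\mathbb{F}_2)}$ has four elements. Replacing $C$ by its product with the category of nonempty finite sets, and taking $T=y(\mathbb{F}_2,1)$, gives a sufficiently cohesive example with the same behaviour. So $G=T$ is a connected pointed representable with $G^T$ disconnected, and your summands $U_0,U_1$ of $2_T$ are connected and single-pointed without being terminal. This also answers the Question following Theorem E affirmatively.

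The paper takes a different route. Since $\E(1,-)$ preserves colimits (Proposition \ref{P:1issmallprojectiveinNS}), so does $\E(T,-)\cong\E(1,(-)^T)$. Hence an atomic $T$ is a retract of a representable, and the paper concludes with Lemmas \ref{L:representabletoitselfisconnected} and \ref{L:retractofcontcont}. The retract step is sound, but Lemma \ref{L:representabletoitselfisconnected} is not. The maps $\pi_2^{-1}\circ f\colon a\times t\to 1\times t$ used in its proof are not of the form $u\times t$, so they are not morphisms of the category of elements of $C(-,t)^{C(-,t)}$. The example above refutes the lemma. The first assertion therefore needs an extra hypothesis, for instance that representables are contractible, under which the paper's retract argument does go through.
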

At the time of writing, the authors are unaware of noncontractible atomic objects in any precohesive context. 
\begin{que} Are there noncontractible atomic objects in McLarty toposes?
\end{que}
Under the weaker assumption of the presence of a connectedness structure in a Cartesian closed category (see \cite{MR4922622}) there ought to be more room for counterexamples, yet this goes beyond the scope of this manuscript.

The results presented herein exhibit how different the existence of right adjoints to exponential functors is from the intuitive notion of tiny.  There are examples of disconnected objects with right adjoints to their exponential functors, in the sense of the number of complemented subobjects (e.g. the terminal object in $\set\times\set$), and not every {\em indivisible}---i.e. connected---object is such that its exponential functor has a right adjoint. Those objects with right adjoints to their exponential functors that arise from axiomatics for Synthetic Differential Geometry are labeled as {\em infinitesimal} because their definition formally satisfies Leibniz's intuition, yet they are not {\em a priori} obtained by a geometrical limit (Appendix 4 of \cite{MR1083355} shows how some of the infinitesimals considered therein are, in some {\em a posteriori }sense, smaller than every open neighborhood of their unique point).  Therefore, neither atomic nor tiny nor infinitesimal are fully adequate names for such objects. Further knowledge of their behavior is needed to propose a better name.

The structure of this report is as follows. Section \ref{S:catAtoms} analyzes the full subcategory of atomic objects in Cartesian closed categories. Section \ref{S:retractatoms} studies the atomicity of retracts of atomic objects. Section \ref{S:jmap} begins the study of atomic objects in toposes, while Sections \ref{S:precohesiveAtom} and \ref{S:McLartyAtoms} specify to the precohesive context, first in general and next for McLarty toposes. Section \ref{S:Grothen} analyzes the special case of presheaf toposes and of Grothendieck toposes. Lastly, the promised examples and counterexamples are explained in Section \ref{S:Examples}.

\

{\bf Acknowledgements} The authors wish to thank Sebasti\'an Mart\'inez Ruiz and Omar Antol\'in for their suggestions. 
\section{The full subcategory of atomic objects}\label{S:catAtoms}
An atomic object in a cartesian closed category is an object $T$ such that the exponential functor $(-)^T$ has a right adjoint
\begin{equation}
(-)^T\dashv (-)_T.
\end{equation}
Denote by $\eta_T$ and $\varepsilon_T$ the unit and counit of this adjunction. Whenever there is no risk of confusion the reference to $T$ might be omitted.  Terminal objects are atomic, but initial objects cannot be atomic. Finite products of atomic objects are also atomic.  In general, limits of atomic objects are not atomic. In Section \ref{S:Grothen}, explicit counterexamples are provided. 

As in the case of the adjunction $(-)\times T\dashv (-)^T$, where natural transformations $(-)^f$ are provided given an arrow $f:S\to T$, for atomic objects $T$ and $S$ and an arrow $f:S\to T$, a natural transformation $(-)_f$ is determined  by the following diagram:
\begin{equation}\label{E:xtothefraction1overf}
\vcenter{\xymatrix{
(X_S)^T\ar[r]^(.53){(X_S)^f}\ar[d]_{(X_f)^T} & (X_S)^S\ar[d]^{\varepsilon_{S,X}} \\
(X_T)^T\ar[r]_(.6){\varepsilon_{T,X}} & X.
}}\end{equation}

For a fixed $X$, $T\mapsto X_T$ and $f\mapsto X_f$ is a covariant functor from the subcategory of atomic objects into the ambient category. 

\begin{lema}\label{L:superTnsubTarefaithful}
Let $\E$ be a cartesian closed category. If $T$ is an atomic object of $\E$ and $T$ has a point $0:1\rightarrow T$, then $(-)^T$ and $(-)_T$ are faithful.
\end{lema}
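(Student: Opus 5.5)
Both functors will be shown faithful by exhibiting each identity functor as a retract of it. A natural transformation $\mathrm{id}\Rightarrow G\Rightarrow \mathrm{id}$ composing to the identity forces $G$ to be faithful: if $Gu=Gv$ for $u,v:X\to Y$, then $u=r_Y\circ Gu\circ s_X=r_Y\circ Gv\circ s_X=v$ by naturality. It is therefore enough, for each of $(-)^T$ and $(-)_T$, to find natural transformations $s:\mathrm{id}\Rightarrow G$ and $r:G\Rightarrow\mathrm{id}$ with $rs=\mathrm{id}$.

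For $(-)^T$ we use the unique arrow $!:T\to 1$ and the point $0:1\to T$. These give natural transformations $(-)^{!}:(-)^1\to(-)^T$ and $(-)^0:(-)^T\to(-)^1$. By contravariant functoriality of exponentiation in the exponent, $(-)^0\circ(-)^{!}=(-)^{!\circ 0}=(-)^{\mathrm{id}_1}=\mathrm{id}$, since $!\circ 0=\mathrm{id}_1$. Identifying $(-)^1\cong\mathrm{id}$ naturally, this exhibits $\mathrm{id}$ as a retract of $(-)^T$, so $(-)^T$ is faithful. Concretely, $X^{!}$ is the ``constant map'' arrow and $X^0$ is evaluation at $0$.

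For $(-)_T$ we repeat the argument with the covariant construction \eqref{E:xtothefraction1overf}. Since $1$ and $T$ are atomic, we get $(-)_0:(-)_1\to(-)_T$ and $(-)_!:(-)_T\to(-)_1$, with $(-)_1\cong\mathrm{id}$ because $(-)^1\cong\mathrm{id}$ and its right adjoint is unique up to isomorphism. The step needing care is that $f\mapsto(-)_f$ is genuinely functorial, so that $(-)_!\circ(-)_0=(-)_{!\circ 0}=(-)_{\mathrm{id}_1}=\mathrm{id}$, and that each $(-)_f$ is natural in $X$.

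Both facts follow by recognising $(-)_f$ as the mate of $(-)^f$ under the adjunctions $(-)^T\dashv(-)_T$ and $(-)^S\dashv(-)_S$. Diagram \eqref{E:xtothefraction1overf} is exactly the equation defining $X_f$ as the transpose of $\varepsilon_{S,X}\circ (X_S)^f$. The mate correspondence between natural transformations $(-)^T\Rightarrow(-)^S$ and $(-)_S\Rightarrow(-)_T$ preserves identities and reverses composition. Since $f\mapsto(-)^f$ is contravariant, $f\mapsto(-)_f$ is covariant. For $f=\mathrm{id}$, the transpose of $\varepsilon_{T,X}$ is $\mathrm{id}_{X_T}$.

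With functoriality in hand, $\mathrm{id}_{(-)}\cong(-)_1$ is a retract of $(-)_T$ via $(-)_0$ and $(-)_!$, and the opening observation gives faithfulness. The only real obstacle is this mate bookkeeping. If one prefers to avoid the general calculus of mates, one can instead check directly, by uniqueness of transposes in \eqref{E:xtothefraction1overf}, that $X_g\circ X_f$ satisfies the defining square for $X_{g\circ f}$.
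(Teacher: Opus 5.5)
Your proof is correct. Its core is the same as the paper's: both rest on the splittings $X^0\circ X^{!}=1$ and $X_{!}\circ X_0=1$, which come from $!\circ 0=1_1$ and functoriality in the exponent. The difference is in how faithfulness is extracted from these splittings.

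\textbf{The paper's route.} It uses the criterion that a right adjoint is faithful iff its counit is epic. Instantiating \eqref{E:xtothefraction1overf} at $f={!}\colon T\to 1$ gives
$\varepsilon_{T,X}\circ(X_T)^{!}=\varepsilon_{1,X}\circ(X_{!})^1$.
The right-hand side is epic, because $X_{!}$ is split epi and $\varepsilon_{1,X}$ is invertible, so $\varepsilon_{T,X}$ is epic. Likewise, the defining square of $X^0$ shows that $\ev\colon X^T\times T\to X$ is epic.

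\textbf{Your route.} You note that the splittings are natural in $X$, so the identity functor is a natural retract of both $(-)^T$ and $(-)_T$. Faithfulness then follows directly from naturality, with no appeal to counits or to the adjunction $(-)\times T\dashv(-)^T$.

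\textbf{What each buys.} Your argument treats both functors uniformly. It is in fact the argument sketched in the paper's own introduction (``any object $Y$ is thus a retract of $Y_T$ \ldots from which it follows that $(-)_T$ is faithful''). Your identification of $(-)_f$ as the mate of $(-)^f$ also justifies the functoriality of $f\mapsto(-)_f$, which the paper asserts without proof but needs for $X_{!}\circ X_0=1$. The paper's route, in exchange, records the identity $\varepsilon_{T,X}\circ(X_T)^{!}\cong X_{!}$. It reuses this later, in the proof that invertibility of $2_0\colon 2\to 2_T$ implies contractibility.
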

\begin{proof}
By \eqref{E:xtothefraction1overf}, the following diagram commutes:
\[\xymatrix{
(X_T)^1\ar[r]^(.53){(X_T)^{!}}\ar[d]_{(X_{!})^1} & (X_T)^T\ar[d]^{\varepsilon_{T,X}} \\
(X_1)^1\ar[r]_(.6){\varepsilon_{1,X}} & X.
}\]
On the other hand, since $\xymatrix{1\ar[r]^0 & T\ar[r]^{!} & 1\ar@{}|{=}[r] & 1_1}$,
\[
X_!X_0=1,
\]
and accordingly $X_!$ is epic. Therefore, since $\varepsilon_{1,X}$ is an isomorphism, $\varepsilon_{T,X}$ is epic, which means that $(-)_T$ is faithful.

Finally, $(-)^T$ is faithful for similar reasons: since $X^0\circ X^!=1$,  $X^0$ is epic and the corresponding diagram of definition for $(-)^0$ makes the counit of $(-)\times T\dashv (-)^T$ epic as well. 
\end{proof}
The diagram \eqref{E:xtothefraction1overf} can be generalized to natural transformations as follows.

\begin{teorema}
Let $\E$ be a cartesian closed category with atomic objects $S,T$. If $\varphi:(-)_T\Rightarrow(-)_S$ is a natural transformation, it determines a natural transformation $\psi:(-)^S\Rightarrow(-)^T$ as follows:
\begin{equation}\label{D:phiinducespsi}
\vcenter{\xymatrix{
X\ar[r]^(.45){\eta_{T,X}}\ar[d]_{\eta_{S,X}} & (X^T)_T\ar[d]^{\varphi_{X^T}} \\
(X^S)_S\ar[r]_{(\psi_X)_S} & (X^T)_S
}}\end{equation}
And vice versa, a natural transformation $\psi:(-)^S\Rightarrow(-)^T$ determines a natural transformation $\varphi:(-)_T\Rightarrow(-)_S$ as follows:
\begin{equation}\label{D:psiinducesphi}
\vcenter{\xymatrix{
(X_T)^S\ar[r]^{(\varphi_X)^S}\ar[d]_{\psi_{X_T}} & (X_S)^S\ar[d]^{\varepsilon_{S,X}} \\
(X_T)^T\ar[r]_(.55){\varepsilon_{T,X}} & X
}}\end{equation}
This is a bijective correspondence.
\end{teorema}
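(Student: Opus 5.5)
This is the standard mate correspondence for natural transformations between adjoints, applied to the adjunctions $(-)^S\dashv(-)_S$ and $(-)^T\dashv(-)_T$. Since $F=(-)^S\dashv G=(-)_S$ and $F'=(-)^T\dashv G'=(-)_T$, natural transformations $\psi:F\Rightarrow F'$ correspond bijectively to natural transformations $\varphi:G'\Rightarrow G$. I will make the correspondence explicit and check that the two diagrams \eqref{D:phiinducespsi} and \eqref{D:psiinducesphi} say exactly that $\psi$ and $\varphi$ are mates.

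\textbf{Step 1: explicit formulas.} Given $\psi$, I define $\varphi_X$ as the transpose under $(-)^S\dashv(-)_S$ of the composite $\varepsilon_{T,X}\circ\psi_{X_T}:(X_T)^S\to X$:
\[
\varphi_X=(\varepsilon_{T,X}\circ\psi_{X_T})_S\circ\eta_{S,X_T}.
\]
By the triangle identities, its transpose $\varepsilon_{S,X}\circ(\varphi_X)^S$ equals $\varepsilon_{T,X}\circ\psi_{X_T}$. This is precisely diagram \eqref{D:psiinducesphi}, and transposition shows that $\varphi_X$ is the unique arrow making it commute.

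Given $\varphi$, I define $\psi_X:X^S\to X^T$ as the transpose under $(-)^T\dashv(-)_T$... but diagram \eqref{D:phiinducespsi} is phrased via $(-)_S$. I will instead take $\psi_X$ to be the transpose under $(-)^S\dashv(-)_S$ of $\varphi_{X^T}\circ\eta_{T,X}:X\to(X^T)_S$, that is,
\[
\psi_X=\varepsilon_{S,X^T}\circ(\varphi_{X^T}\circ\eta_{T,X})^S.
\]
Its transpose is $(\psi_X)_S\circ\eta_{S,X}$, so \eqref{D:phiinducespsi} commutes, and $\psi_X$ is the unique arrow doing so.

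\textbf{Step 2: naturality.} I will check that $\psi$ and $\varphi$ so defined are natural. Each is a composite of natural transformations: for example, $\varphi=(G\varepsilon'\circ G\psi G')\circ\eta G'$. Naturality then follows formally, and I will state it briefly.

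\textbf{Step 3: bijectivity.} I will show that the two assignments are mutually inverse, using uniqueness of transposes. Starting from $\psi$, let $\varphi$ be the arrow produced by \eqref{D:psiinducesphi}. I then need to verify that $\psi$ itself makes \eqref{D:phiinducespsi} commute, i.e. that
\[
(\psi_X)_S\circ\eta_{S,X}=\varphi_{X^T}\circ\eta_{T,X}.
\]
Transposing both sides along $(-)^S\dashv(-)_S$ reduces this to
\[
\psi_X=\varepsilon_{S,X^T}\circ(\varphi_{X^T})^S\circ(\eta_{T,X})^S.
\]
By \eqref{D:psiinducesphi} at $X^T$, the right-hand side is $\varepsilon_{T,X^T}\circ\psi_{(X^T)_T}\circ(\eta_{T,X})^S$. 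Naturality of $\psi$ rewrites this as $\varepsilon_{T,X^T}\circ(\eta_{T,X})^T\circ\psi_X$, and the triangle identity collapses it to $\psi_X$. The converse composite is handled symmetrically: I transpose to reduce it to \eqref{D:phiinducespsi}, then use naturality of $\varphi$ with respect to $\varepsilon_{S,X}$ and a triangle identity.

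\textbf{Main obstacle.} The only delicate point is the diagram-chasing in Step 3, specifically the order in which naturality and the triangle identity are applied. Once that bookkeeping is set up, the argument is simply the standard mates calculus.
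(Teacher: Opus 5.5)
Your proposal is correct and follows essentially the paper's route. It uses the same transpose formulas for $\psi_X$ and $\varphi_X$, gets naturality because both are composites of whiskered transformations, and proves mutual inverseness via naturality plus a triangle identity; the only difference is that you transpose before chasing, while the paper chases the two squares directly after substituting the defining formulas.

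There is one slip in the converse direction. The naturality you need there is that of $\varphi$ with respect to $\varepsilon_{T,X}\colon (X_T)^T\to X$, namely $(\varepsilon_{T,X})_S\circ\varphi_{(X_T)^T}=\varphi_X\circ(\varepsilon_{T,X})_T$. This is followed by the triangle identity $(\varepsilon_{T,X})_T\circ\eta_{T,X_T}=1_{X_T}$. Naturality with respect to $\varepsilon_{S,X}$, as you wrote, is not what the step uses.
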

\begin{proof}
Let $\varphi:(-)_T\Rightarrow(-)_S$ be a natural transformation. Let $\psi_X:X^S\rightarrow X^T$ be the transpose of $\varphi_{X^T}\circ\eta_{T,X}$ under $(-)^S\dashv(-)_S$ as shown in \eqref{D:phiinducespsi}. So
\[
\psi_X=\varepsilon_{S,X^T}\circ(\varphi_{X^T}\circ\eta_{T,X})^S.
\]
Hence $\psi$ is natural in $X$. This $\psi_X$ makes \eqref{D:psiinducesphi} commute: indeed, the following diagram commutes:
\[\xymatrix{
(X_T)^S\ar[r]^(.45){(\eta_{T,X_T})^S}\ar[dr]_1 & (((X_T)^T)_T)^S\ar[r]^(.55){(\varphi_{(X_T)^T})^S}\ar[d]^{((\varepsilon_{T,X})_T)^S} & (((X_T)^T)_S)^S\ar[r]^(.62){\varepsilon_{S,(X_T)^T}}\ar[d]^{((\varepsilon_{T,X})_S)^S} & (X_T)^T\ar[d]^{\varepsilon_{T,X}} \\
& (X_T)^S\ar[r]_{(\varphi_X)^S} & (X_S)^S\ar[r]_{\varepsilon_{S,X}} & X
}\]

Conversely, let $\varphi_X:X_T\rightarrow X_S$ be the transpose of $\varepsilon_{T,X}\circ\psi_X$ under $(-)^S\dashv(-)_S$ as shown in \eqref{D:psiinducesphi}. So
\[
\varphi_X=(\varepsilon_{T,X}\circ\psi_{X_T})_S\circ\eta_{S,X_T}.
\]
Hence $\varphi$ is natural in $X$. This $\phi_X$ makes \eqref{D:phiinducespsi} commute: indeed, the following diagram commutes:
\[\xymatrix{
X\ar[r]^{\eta_{S,X}}\ar[d]_{\eta_{T,X}} & (X^S)_S\ar[r]^{(\psi_X)_S}\ar[d]_{((\eta_{T,X})^S)_S} & (X^T)_S\ar[d]_{((\eta_{T,X})^T)_S}\ar[dr]^1 & \\
(X^T)_T\ar[r]_(.45){\eta_{S,(X^T)_T}} & (((X^T)_T)^S)_S\ar[r]_(.53){(\psi_{(X^T)_T})_S} & (((X^T)_T)^T)_S\ar[r]_(.61){(\varepsilon_{T,X^T})_S} & (X^T)_S
}\]
Therefore there is a bijective correspondence between natural transformations $\varphi:(-)_T\Rightarrow(-)_S$ and natural transformations $\psi:(-)^S\Rightarrow(-)^T$.
\end{proof}
In fact, this is also true of any given adjunction between endofunctors. In particular, one has the following result.  
\begin{teorema}
Let $\E$ be a cartesian closed category with objects $S,T$. There is a bijective correspondence between natural transformations $\varphi:(-)^S\Rightarrow(-)^T$ and natural transformations $\psi:(-)\times T\Rightarrow(-)\times S$.
\end{teorema}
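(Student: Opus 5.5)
This is the previous theorem applied to the adjunction $(-)\times T\dashv(-)^T$ (and $(-)\times S\dashv(-)^S$) instead of $(-)^T\dashv(-)_T$. The remark before the statement says the correspondence holds for any pair of adjunctions between endofunctors, so I will redo that proof abstractly and then specialize.

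Let $L_T=(-)\times T\dashv R_T=(-)^T$ and $L_S\dashv R_S$ likewise, with units $\eta_T,\eta_S$ (the coevaluations $X\to (X\times T)^T$) and counits $\varepsilon_T,\varepsilon_S$ (the evaluations $X^T\times T\to X$).

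\textbf{From $\varphi$ to $\psi$.} Given $\varphi:R_S\Rightarrow R_T$, define $\psi_X: X\times T\to X\times S$ as the transpose under $L_T\dashv R_T$ of
\[
\varphi_{X\times S}\circ\eta_{S,X}:X\to (X\times S)^S\to (X\times S)^T,
\]
that is, $\psi_X=\varepsilon_{T,X\times S}\circ(\varphi_{X\times S}\circ\eta_{S,X})\times T$. Naturality of $\psi$ in $X$ follows from naturality of $\eta_S$, $\varphi$ and $\varepsilon_T$.

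\textbf{From $\psi$ to $\varphi$.} Given $\psi$, define $\varphi_X:X^S\to X^T$ as the transpose under $L_T\dashv R_T$ of
\[
\varepsilon_{S,X}\circ\psi_{X^S}: X^S\times T\to X^S\times S\to X,
\]
that is, $\varphi_X=(\varepsilon_{S,X}\circ\psi_{X^S})^T\circ\eta_{T,X^S}$. Again this is natural.

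\textbf{Mutual inverses.} I would check the two round trips by diagram chases of exactly the shape of the two large diagrams in the previous proof, with the roles of left and right adjoints swapped. In each, one square commutes by naturality of $\varphi$ (or $\psi$) applied to the counit (or unit), and one triangle commutes by a triangle identity.

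For $\psi\mapsto\varphi\mapsto\psi'$, I expand $\psi'_X$ and use naturality of $\psi$ along $\eta_{S,X}:X\to (X\times S)^S$. This gives $\psi_{(X\times S)^S}\circ(\eta_{S,X}\times T)=(\eta_{S,X}\times S)\circ\psi_X$. Then the triangle identity $\varepsilon_{S,X\times S}\circ(\eta_{S,X}\times S)=1$ gives $\psi'=\psi$.

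For $\varphi\mapsto\psi\mapsto\varphi'$, I use naturality of $\varphi$ along $\varepsilon_{S,X}$ and the other triangle identity, dually.

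The only real care needed is matching the variance. Here $\varphi$ goes $S\to T$ on the right adjoints, while $\psi$ goes $T\to S$ on the left adjoints. This is the usual mates correspondence (Kelly–Street), and I do not expect a genuine obstacle beyond keeping the bookkeeping straight in the second chase. Alternatively, I could simply cite that for adjunctions $L\dashv R$ and $L'\dashv R'$, transformations $R\Rightarrow R'$ correspond bijectively to $L'\Rightarrow L$.
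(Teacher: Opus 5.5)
Your proposal is correct and is the approach the paper intends: its proof is just ``similar to that of the previous theorem'', i.e.\ the same transpose-and-diagram-chase (mates) argument, here run for the adjunctions $(-)\times T\dashv(-)^T$ and $(-)\times S\dashv(-)^S$. Your formulas for $\psi_X$ and $\varphi_X$, the choice of $L_T\dashv R_T$ for transposing, and the round-trip checks (naturality of $\psi$ along $\eta_{S,X}$, naturality of $\varphi$ along $\varepsilon_{S,X}$, plus the triangle identities and the routine naturality of transposition) all check out, with the variance matched correctly.
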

\begin{proof}
Similar to that of the previous theorem.
\end{proof}
\begin{prop}
Let $\E$ be a cartesian closed category and $\varphi$ be a natural transformation $(-)^A\Rightarrow(-)^B$. Let $\psi:(-)\times B\Rightarrow(-)\times A$ be the natural transformation induced by $\varphi$. If  for $\varphi_X$ the diagram 
\begin{equation}\label{D:sigmacommutesphi}
\vcenter{\xymatrix{
X\ar[r]^{\sigma_X^A}\ar[dr]_{\sigma_X^B} & X^A\ar[d]^{\varphi_X} \\
& X^B,
}}\end{equation}
where $\sigma_X^A$ is the transpose to the projection under the adjuntion $(-)\times A\dashv (-)^A$, then so does the following diagram:
\[\xymatrix{
X\times B\ar[dr]^{\pi_{B,X}}\ar[d]_{\psi_X} & \\
X\times A\ar[r]_(.6){\pi_{A,X}} & X.
}\]
\end{prop}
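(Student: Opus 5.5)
The plan is to make the correspondence of the preceding theorem explicit and then chase one diagram. Write $e^A_X: X^A\times A\to X$ for the evaluation (the counit of $(-)\times A\dashv(-)^A$) and $\eta^B$ for the unit of $(-)\times B\dashv(-)^B$. Mirroring \eqref{D:psiinducesphi}, the transformation $\psi$ induced by $\varphi$ is characterised by
\[
e^A_X\circ\psi_{X^A} \;=\; e^B_X\circ(\varphi_X\times B)\colon X^A\times B\to X,
\]
and concretely $\psi_X=(\varphi_{X\times A}\circ\eta^A_X)$ transposed along $(-)\times B\dashv(-)^B$. Hence
\[
\psi_X=e^B_{X\times A}\circ\bigl((\varphi_{X\times A}\circ\eta^A_X)\times B\bigr).
\]
I will use this formula for $\psi_X$.

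Next I use naturality of $\varphi$ along the projection $\pi_{A,X}\colon X\times A\to X$. This gives
\[
(\pi_{A,X})^B\circ\varphi_{X\times A}=\varphi_X\circ(\pi_{A,X})^A .
\]
The triangle identity says that $(\pi_{A,X})^A\circ\eta^A_X$ is the transpose of $\pi_{A,X}$, which is exactly $\sigma^A_X$. Combining the two,
\[
(\pi_{A,X})^B\circ\varphi_{X\times A}\circ\eta^A_X=\varphi_X\circ\sigma^A_X=\sigma^B_X,
\]
where the last equality is the hypothesis \eqref{D:sigmacommutesphi}.

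Now compose $\psi_X$ with $\pi_{A,X}$. Naturality of evaluation lets me pull $\pi_{A,X}$ inside:
\[
\pi_{A,X}\circ e^B_{X\times A}=e^B_X\circ\bigl((\pi_{A,X})^B\times B\bigr).
\]
Therefore
\[
\pi_{A,X}\circ\psi_X=e^B_X\circ\bigl(((\pi_{A,X})^B\circ\varphi_{X\times A}\circ\eta^A_X)\times B\bigr)=e^B_X\circ(\sigma^B_X\times B).
\]
By definition of $\sigma^B_X$ as a transpose, the right-hand side is $\pi_{B,X}$, as required.

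The only delicate point is the first step: fixing the correct explicit formula for $\psi$ from the bijection in the previous theorem, since that proof was only sketched as "similar". I would verify it by transposing \eqref{D:psiinducesphi} to the present adjunction $(-)\times T\dashv(-)^T$ in place of $(-)^T\dashv(-)_T$. After that, the argument is routine naturality and triangle-identity bookkeeping.
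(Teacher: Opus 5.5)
Your proof is correct. Its diagram chase runs on the opposite side of the mate correspondence from the paper's.

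The paper records both squares of \eqref{D:phiinducedbypsithroughev}, but its chase uses only the counit-side one, $\ev_{A,X}\circ\psi_{X^A}=\ev_{B,X}\circ(\varphi_X\times B)$. It writes $\pi_{A,X}=\ev_{A,X}\circ(\sigma^A_X\times A)$ and slides $\sigma^A_X$ past $\psi$ by naturality of $\psi$. It then applies the counit-side square, and finishes with the hypothesis and $\ev_{B,X}\circ(\sigma^B_X\times B)=\pi_{B,X}$.

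You instead use the unit-side square, which is exactly the definition of $\psi_X$ as the transpose of $\varphi_{X\times A}\circ\eta^A_X$. You then push $\pi_{A,X}$ inward using naturality of $\varphi$ along $\pi_{A,X}$ and naturality of evaluation.

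Your route is the more self-contained one. It needs only the definition of $\psi$ and the given naturality of $\varphi$. The paper's chase depends on the naturality of $\psi$ and on the counit-side identity, both of which come from the correspondence theorem whose proof the paper only sketches as ``similar''. In exchange, once that theorem is granted, the paper's version is a single square.

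A terminological nit: $(\pi_{A,X})^A\circ\eta^A_X=\sigma^A_X$ is just the unit formula for transposition, not a triangle identity.
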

\begin{proof}
The equivalence is given by the following commutative diagrams:
\begin{equation}\label{D:phiinducedbypsithroughev}
\vcenter{\xymatrix{
X^A\times B\ar[r]^{\varphi_X\times B}\ar[d]_{\psi_{X^A}} & X^B\times B\ar[d]^{\ev_{B,X}} && X\ar[r]^(.45){\eta_{A,X}}\ar[d]_{\eta_{B,X}} & (X\times A)^A\ar[d]^{\varphi_{X\times A}}\\
X^A\times A\ar[r]_(.6){\ev_{A,X}} & X && (X\times B)^B\ar[r]_{(\psi_X)^B} & (X\times A)^B
}}\end{equation}
Now, by the naturality of $\psi$ and \eqref{D:phiinducedbypsithroughev}, the following diagram commutes:
\[\xymatrix{
& &&X\times B\ar@/_2pc/[dlll]^{\psi_X}\ar[dll]_{\sigma_X^A\times B}\ar[d]^{\sigma_X^B\times B}\ar@/^4pc/[dd]^{\pi_{B,X}}  \\
X\times A\ar[dr]^{\sigma_X^A\times A}\ar@/_4pc/[drrr]_{\pi_{A,X}} & X^A\times B\ar[rr]_{\varphi_X\times B}\ar[d]^{\psi_{X^A}}& & X^B\times B\ar[d]_{\ev_{B,X}} \\
 & X^A\times A\ar[rr]_(.6){\ev_{A,X}} & & X
}\]
\end{proof}

\begin{teorema}
Let $\E$ be a category with finite products, $A,B\in\E$, and $\psi:(-)\times B\Rightarrow(-)\times A$ be a natural transformation. Then $\psi=(-)\times f$ for some arrow $f:B\rightarrow A$ iff the following diagram commutes:
\[\xymatrix{
X\times B\ar[dr]^{\pi_{B,X}}\ar[d]_{\psi_X} & \\
X\times A\ar[r]_(.6){\pi_{A,X}} & X
}\]
\end{teorema}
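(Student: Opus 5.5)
The forward direction is immediate. If $\psi=(-)\times f$, then $\pi_{A,X}\circ(X\times f)=\pi_{B,X}$, because $X\times f$ is the identity on the $X$-component. So all the work is in the converse.

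For the converse, assume $\pi_{A,X}\circ\psi_X=\pi_{B,X}$ for every $X$. The candidate arrow comes from the terminal component:
\[
f:=\pi'_{A}\circ\psi_1\circ\langle !,1_B\rangle : B\to 1\times B\to 1\times A\to A,
\]
where $\pi'_A:1\times A\to A$ is the (iso) projection to $A$. The goal is to show $\psi_X=X\times f$ for every $X$. By hypothesis the two arrows $\psi_X$ and $X\times f$ already have the same first component $\pi_{B,X}$. It therefore suffices to compare their second components, i.e.\ to show that the composite $X\times B\xrightarrow{\psi_X}X\times A\to A$ equals $f\circ\pi_B$.

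This is where naturality enters. I apply it to the unique arrow $!_X:X\to 1$, which gives the commuting square
\[
(!_X\times A)\circ\psi_X=\psi_1\circ(!_X\times B).
\]
Composing both sides with the projection $1\times A\to A$, the left side becomes the $A$-component of $\psi_X$, since $!_X\times A$ leaves the $A$-coordinate unchanged. The right side becomes $\pi'_A\circ\psi_1\circ(!_X\times B)$. Now $!_X\times B$ factors as $\langle !,1_B\rangle\circ\pi_B:X\times B\to B\to 1\times B$, so the right side equals $f\circ\pi_B$. This is exactly the required equality of second components.

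Putting the two components together gives $\psi_X=\langle\pi_{B,X},f\circ\pi_B\rangle=X\times f$. I expect no genuine obstacle here. The only care needed is bookkeeping with the isomorphisms $1\times B\cong B$ and $1\times A\cong A$, and checking that the projection identities used above hold with the paper's product conventions, i.e.\ which factor $\pi_{A,X}$ projects onto.
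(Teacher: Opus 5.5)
Your proof is correct and is essentially the paper's argument. The paper also sets $f:=\pi_A\circ\psi_1\circ\pi_B^{-1}$ (your $\langle !,1_B\rangle$ is exactly $\pi_B^{-1}$) and concludes $\psi_X=X\times f$ from the naturality square at $!:X\to 1$ together with the hypothesis on the $X$-component; you additionally spell out the trivial forward direction, which the paper leaves implicit.
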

\begin{proof}
Let $f:=\pi_A\circ\psi_1\circ\pi_B^{-1}$. Since the following diagram commutes:
\[\xymatrix{
& & B\ar[dr]^1 & \\
X\ar[d]_1 & X\times B\ar[l]_{\pi_X}\ar[d]^{\psi_X}\ar[r]_{!\times B}\ar[ur]^{\pi_B} & 1\times B\ar[r]_{\pi_B}\ar[d]^{\psi_1} & B\ar[d]^f \\
X & X\times A\ar[l]^{\pi_X}\ar[r]^{!\times A}\ar[dr]_{\pi_A} & 1\times A\ar[r]^{\pi_A} & A, \\
& & A\ar[ur]_1 &
}\]
we have $\psi_X=X\times f$.
\end{proof}
Since $\pi$ is isomorphic to $(-)\times!$ and $\sigma$ is isomorphic to $(-)^!$, a similar relation is obtained between commutation with $(-)^!$ and commutation with $(-)_!$, as summarized in the following result.
\begin{cor}\label{C:Conditionforfullness}
Let $\E$ be a cartesian closed category with atomic objects $S,T$. Let $\varphi:(-)_T\Rightarrow(-)_S$ be a natural transformation. Then $\varphi=(-)_f$ for some $f:T\rightarrow S$ iff the following diagram commutes:
\[\xymatrix{
X_T\ar[r]^(.53){\varphi_X}\ar[dr]_{X_{!_T}} & X_S\ar[d]^{X_{!_S}} \\
& X.
}\]
\end{cor}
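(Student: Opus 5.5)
The plan is to chain the three correspondences established above. Given $\varphi:(-)_T\Rightarrow(-)_S$, the first bijection (diagrams \eqref{D:phiinducespsi} and \eqref{D:psiinducesphi}) yields $\psi:(-)^S\Rightarrow(-)^T$. The second bijection turns $\psi$ into $\chi:(-)\times T\Rightarrow(-)\times S$. The last theorem then says that $\chi=(-)\times f$ for some $f:T\to S$ exactly when $\pi_{S,X}\circ\chi_X=\pi_{T,X}$. Two facts remain to be checked. First, these bijections send $(-)_f\mapsto(-)^f\mapsto(-)\times f$. Second, the triangle condition in the statement corresponds, step by step, to the projection condition.

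For the first fact, compare the defining square \eqref{E:xtothefraction1overf} of $(-)_f$ with \eqref{D:psiinducesphi}. With $\psi=(-)^f$ they are literally the same square, so $(-)_f$ corresponds to $(-)^f$. In the same way, the square defining $(-)^f$ through evaluation is the left square of \eqref{D:phiinducedbypsithroughev} with $\psi=(-)\times f$. Because both correspondences are bijective, $\varphi=(-)_f$ holds iff $\psi=(-)^f$, which holds iff $\chi=(-)\times f$.

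For the second fact, the key point is that all three correspondences respect composition of natural transformations (vertical composition on the $(-)_T$ side corresponds to vertical composition in the reverse order on the $(-)^T$ side). This is the usual mates calculus: one pastes the unit and counit triangles, much as in the diagrams of the first bijection's proof. Applying this to $S\to 1$, the triangle $X_{!_S}\circ\varphi_X=X_{!_T}$ means $(-)_{!_S}\circ\varphi=(-)_{!_T}$. Under the mate correspondence this becomes $\psi\circ(-)^{!_S}=(-)^{!_T}$, using that $(-)_{!}$ is the mate of $(-)^{!}$ by the first fact. Since $(-)^{!}$ is, up to the isomorphism $X\cong X^1$, the transformation $\sigma$, this is exactly condition \eqref{D:sigmacommutesphi}. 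The Proposition then gives the projection condition for $\chi$. For the converse direction, the same mate argument at the next level gives $\chi$ compatible with $(-)\times!$, which is $\pi$; this translates back to compatibility of $\psi$ with $(-)^!$ and of $\varphi$ with $(-)_!$. The converse is needed because the Proposition is stated in one direction only, so I will prove the reverse implication directly by the same diagram chase.

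The hard step is checking that the correspondences are compatible with composition. The mate bijection for a pair of adjunctions is functorial, so I will invoke that general fact rather than redo pasting diagrams. The only concrete check is that the identification $(-)_1\cong\mathrm{id}$, $(-)^1\cong\mathrm{id}$ makes $(-)_{!}$, $(-)^{!}$ and $(-)\times!$ correspond. This follows from the first fact with $f=!$.
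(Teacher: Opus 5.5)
Your proposal is correct and follows the route the paper intends: the paper's proof is just ``Straightforward'' after noting $\pi\cong(-)\times!$ and $\sigma\cong(-)^!$, and you chain the two mate bijections, transfer the triangle to condition \eqref{D:sigmacommutesphi}, then apply the Proposition and the last Theorem. One small remark: the converse needs no diagram chase, since $X_{!_S}\circ X_f=X_{!_S\circ f}=X_{!_T}$ by functoriality of $f\mapsto X_f$.
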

\begin{proof} Straightforward. 
\end{proof}

\section{Retracts of atomic objects}\label{S:retractatoms}
The purpose of this section is to prove the first statement of \hyperlink{T:TeorB}{Theorem B}. Namely,
\begin{teorema}\label{T:retractoftinytiny}
Let $\E$ be a regular cartesian closed category. Any retract of an atomic object of $\E$ is atomic.
\end{teorema}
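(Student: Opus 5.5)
Let $T$ be atomic and let $s:R\to T$, $r:T\to R$ satisfy $r\circ s=1_R$. The plan is to construct the right adjoint $(-)_R$ explicitly as a subobject of $(-)_T$, cut out by the idempotent $e=s\circ r:T\to T$. In any category, a functor that is a retract of one with a right adjoint acquires a right adjoint once the relevant idempotents split. Regularity will be used to split, or equalize, the induced idempotent on $(-)_T$.

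\emph{Step 1.} Exponentiation is contravariant, so $X^s\circ X^r = X^{r s}=1$. Hence $X^R$ is a retract of $X^T$, naturally in $X$, via $X^r:X^R\to X^T$ and $X^s:X^T\to X^R$. The composite $X^e=X^r\circ X^s$ is a natural idempotent on $(-)^T$.

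\emph{Step 2.} Under the bijection of the first theorem of Section \ref{S:catAtoms} (taking $S=T$), the natural transformation $\psi=(-)^e:(-)^T\Rightarrow(-)^T$ corresponds to a natural transformation $\varphi=(-)_e:(-)_T\Rightarrow(-)_T$. I will check that this correspondence respects composition, so that $(-)_e$ is again idempotent. The check amounts to the functoriality of $f\mapsto X_f$ recorded after \eqref{E:xtothefraction1overf}, since $e\circ e=e$.

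\emph{Step 3.} Define $X_R$ as the equalizer of $1,(X_e):X_T\rightrightarrows X_T$. This is a splitting of the idempotent; equalizers exist since $\E$ is regular, hence finitely complete. Write $X_R\rightarrowtail X_T$ for the inclusion $i_X$ and $q_X:X_T\to X_R$ for the induced retraction with $i_Xq_X=X_e$. Functoriality in $X$ follows from the naturality of $X_e$.

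\emph{Step 4.} For $Y$ and $X$, I will produce the bijection $\E(Y^R,X)\cong\E(Y,X_R)$. A map $g:Y^R\to X$ gives $g\circ Y^s:Y^T\to X$, whose transpose $\bar g:Y\to X_T$ satisfies $X_e\circ\bar g=\bar g$. This holds because precomposition with $Y^e=Y^rY^s$ fixes $g\circ Y^s$ (using $sr s=s$), and, by construction of $(-)_e$ via \eqref{D:psiinducesphi}, precomposing with $Y^e$ transposes to postcomposing with $X_e$. So $\bar g$ factors through $X_R$. Conversely, given $h:Y\to X_R$, transpose $i_X h$ to $k:Y^T\to X$ and set $g=k\circ Y^r$. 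The equation $k\circ Y^e=k$ then gives that these two assignments are mutually inverse. Naturality in $Y$ and $X$ is inherited from the $T$-adjunction together with the naturality of $Y^r$, $Y^s$ and $i$.

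The main obstacle is the transposition compatibility in Step 4. One must show precisely that, for $k:Y^T\to X$ with transpose $\bar k$, the transpose of $k\circ Y^e$ equals $X_e\circ\bar k$. I would derive this directly from the defining square \eqref{D:psiinducesphi} with $\psi=(-)^e$ and $\varphi=(-)_e$, together with the naturality of $\varepsilon_T$ and of $Y^e$ in $Y$. Everything else is formal idempotent-splitting.
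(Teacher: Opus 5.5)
Your proposal is correct and is essentially the paper's proof. There too the value at $Y$ is taken to be an object $Z$ splitting the idempotent $Y_r$ on $Y_T$ (with $r=i\circ q$). The bijection is obtained by transporting the $(-)^T\dashv(-)_T$ bijection along $X^i$ and $X^q$, and the key computation is exactly your transposition identity: the transpose of $Y_r\circ\widehat{f\circ X^i}$ is $f\circ X^i\circ X^r$.

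The only difference is cosmetic. The paper splits $Y_r$ via its epi--mono factorization $Y_r=\varphi\circ\vartheta$, using regularity and cancellation to get $\vartheta\circ\varphi=1$. You split it as the equalizer of $1$ and $X_e$, which makes clear that only the splitting of these idempotents is needed, not the full regular structure.
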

\begin{proof}
Let $T$ be an atomic object of $\E$. Let $Q$ be a retract of $T$; that is, there are arrows $q:T\twoheadrightarrow Q$ and $i:Q\rightarrowtail T$ such that $q\circ i=1_Q$. Let $r:=i\circ q:T\rightarrow T$. For every $Y\in\E$, let
\[\xymatrix{
Y_T\ar@{->>}[dr]_{\vartheta}\ar[rr]^{Y_r} & & Y_T \\
& Z\ar@{(>->}[ur]_{\varphi} &
}\]
be a chosen epi-mono factorization of $Y_r$.  So define
\[
\Psi f:=\vartheta\circ\widehat{f\circ X^i}.
\]
for $f:X^Q\rightarrow Y$ in $\E$, and
\[
\Psi^{-1} g:=\widehat{\varphi\circ g}\circ X^q
\]
for $g:X\rightarrow Z$ in $\E$, where $\widehat{(-)}$ is the transpose under the adjunction $(-)^T\dashv(-)_T$. The assignments $\Psi$ and $\Psi^{-1}$ are inverse of each other. Indeed, since the following diagram commutes:
\[\xymatrix{
X^T\ar[r]^{X^r}\ar[d]_{(\widehat{f\circ X^i})^T} & X^T\ar[d]_(.42){(\widehat{f\circ X^i})^T}\ar@/^2pc/[dd]^{f\circ X^i} \\
(Y_T)^T\ar[r]^(.53){(Y_T)^r}\ar[d]_{(Y_r)^T} & (Y_T)^T\ar[d]_{\varepsilon_{T,Y}} \\
(Y_T)^T\ar[r]_(.6){\varepsilon_{T,Y}} & Y,
}\]
the transpose of $Y_r\circ\widehat{f\circ X^i}$ is $f\circ X^i\circ X^r$, which is $f\circ X^i$. Hence
\[
\Psi^{-1}\Psi(f)=f\circ X^i\circ X^q=f.
\]

Conversely, one has $\widehat{\varphi\circ g}\circ X^q\circ X^i=\widehat{\varphi\circ g}\circ X^r$. A commutative diagram similar to the previous one yields that the transpose of $\widehat{\varphi\circ g}\circ X^r$ is $Y^r\circ\varphi\circ g$, which is
\[
\varphi\circ\vartheta\circ\varphi\circ g=\varphi\circ g,
\]
since $Y_r$ is idempotent, and $\vartheta$ and $\varphi$ are epic and monic, respectively. Hence
\[
\Psi\Psi^{-1}(g)=\vartheta\circ\varphi\circ g=g.
\]

To see that $\Psi$ is natural both in $X$ and $Y$, let $h:X'\rightarrow X$ be an arrow in $\E$. The following diagrams commute:
\[\xymatrix{
(X')^T\ar[r]^{h^T}\ar[d]_{(X')^i} & X^T\ar[d]^{X^i} \\
(X')^Q\ar[r]_{h^Q} & X^Q 
}\]
and
\[\xymatrix{
(X')^T\ar[d]_{h^T} & \\
X^T\ar[dr]^{f\circ X^i}\ar[d]_{(\widehat{f\circ X^i})^T} & \\
(Y_T)^T\ar[r]_(.62){\varepsilon_{T,Y}} & Y.
}\]
Hence
\begin{align*}
\widehat{fh^Q(X')^i} &=\widehat{fX^ih^T}\\
&=\widehat{fX^i}h.
\end{align*}
Hence $\Psi$ is natural in $X$; i.e., the following diagram commutes:
\[\xymatrix{
\E(X^Q,Y)\ar[r]^\Psi\ar[d]_{\E(h^Q,Y)} & \E(X,Z)\ar[d]^{\E(h,Z)}\\
\E((X')^Q,Y)\ar[r]_\Psi & \E(X',Z).
}\]

Now, let $k:Y\rightarrow Y'$ be an arrow in $\E$. Consider the following commutative diagram:
\[\xymatrix{
Y_T\ar[rr]^{Y_r}\ar@{->>}[dr]^\vartheta\ar[dd]_{k_T} & & Y_T\ar[dd]^{k_T} \\
& Z\ar@{(>->}[ur]^\varphi & \\
(Y')_T\ar[rr]_{(Y')_r}\ar@{->>}[dr]_{\vartheta'} & & (Y')_T \\
& Z'\ar@{(>->}[ur]_{\varphi'} &  
}\]
The arrow $\vartheta'\circ k_T\circ\varphi:Z\rightarrow Z'$ makes the above diagram commute. So the following diagram commutes:
\[\xymatrix{
\E(X^Q,Y)\ar[r]^\Psi\ar[d]_{\E(X^Q,k)} & \E(X,Z)\ar[d]^{\E(X,\vartheta'\circ k_T\circ\varphi)} \\
\E(X^Q,Y')\ar[r]_\Psi & \E(X,Z'),
}\]
since
\begin{align*}
\vartheta'\circ k_T\circ\varphi\circ\vartheta\circ\widehat{f\circ X^i} &=\vartheta'\circ k_T\circ Y_r\circ\widehat{f\circ X^i}\\
&=\vartheta'\circ(Y')_r\circ k_T\circ\widehat{f\circ X^i}\\
&=\vartheta'\circ\widehat{k f X^i}.
\end{align*}
Whence $\Psi$ is natural in $Y$. Therefore $(-)^Q$ has a right adjoint functor.
\end{proof}

\section{A generalized singleton in elementary toposes}\label{S:jmap}

The purpose of this section is to prove \hyperlink{T:TeorA}{Theorem A}. This is done in several parts. 
\begin{teorema}\label{T:Monicity of j}
Let $\E$ be a topos and $T$ an atomic object with a point $p:1\rightarrow T$ of $T$. Then for every $Y\in\E$ the transpose $j_{p,Y}$ of $(\delta_Y)_T\circ 1\times Y_p$ is monic. 
\end{teorema}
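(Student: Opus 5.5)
The idea is to turn the equality $j_{p,Y}\circ a=j_{p,Y}\circ b$ into an equality of subobjects of an exponential, using the adjunction $(-)^T\dashv(-)_T$ to identify arrows into $\subob_T$ with subobjects. I will test monicity on generalized elements: let $a,b:U\to Y_T$ satisfy $j_{p,Y}\circ a=j_{p,Y}\circ b$, and show $a=b$. By the exponential adjunction in $\E$, this hypothesis is equivalent to
\[
(\delta_Y)_T\circ(a\times Y_p)=(\delta_Y)_T\circ(b\times Y_p):U\times Y\to\subob_T .
\]
Here I identify $(Y\times Y)_T\cong Y_T\times Y_T$, which holds because $(-)_T$ is a right adjoint and so preserves products.

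Next I transpose once more, now under $(-)^T\dashv(-)_T$. An arrow $W\to\subob_T$ corresponds to an arrow $W^T\to\subob$, that is, to a subobject of $W^T$. Take $W=U\times Y$, so that $W^T\cong U^T\times Y^T$. By naturality of the adjunction bijection, together with the fact that $(-)^T$ and $(-)_T$ both preserve products, the transpose of $(\delta_Y)_T\circ(a\times Y_p)$ is $\delta_Y\circ(\hat a\times\hat{Y_p})$. Here $\hat a:U^T\to Y$ is the transpose of $a$, and $\hat{Y_p}=\varepsilon_{T,Y}\circ (Y_p)^T:Y^T\to Y$ is the transpose of $Y_p$. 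I expect the main bookkeeping obstacle to be identifying $\hat{Y_p}$ with evaluation at the point, $\ev_p=Y^p\circ(Y^T\cong Y^T)$, i.e. $Y^T\to Y^1\cong Y$. This should follow from diagram \eqref{E:xtothefraction1overf} with $S=1$ and $f=p$, together with $\varepsilon_{1}\cong 1$.

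Granting this, the subobject of $U^T\times Y^T$ classified by the transpose of $j_{p,Y}\circ a$ is the equalizer
\[
R_a=\{(u,g)\ :\ \hat a(u)=g(p)\}\rightarrowtail U^T\times Y^T ,
\]
and similarly for $R_b$. The hypothesis gives $R_a=R_b$. Now let $\sigma:Y\to Y^T$ be the transpose of the projection $Y\times T\to Y$ (constant maps), which satisfies $\ev_p\circ\sigma=1_Y$. The arrow $(1,\sigma\circ\hat a):U^T\to U^T\times Y^T$ factors through $R_a$, since $\ev_p\sigma\hat a=\hat a$. Hence it factors through $R_b$, which gives $\hat b=\ev_p\circ\sigma\circ\hat a=\hat a$. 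Transposing back yields $a=b$, so $j_{p,Y}$ is monic.

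Besides the identification of $\hat{Y_p}$, the remaining care goes into checking that the comparison isomorphisms $(U\times Y)^T\cong U^T\times Y^T$ and $(Y\times Y)_T\cong Y_T\times Y_T$ are compatible with the transposes. Both are routine consequences of naturality of the unit and counit.
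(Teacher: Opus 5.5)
Your proof is correct, and it is essentially the paper's argument carried out on generalized elements. The paper uses the isomorphism $(\subob^{Y^T})_T\cong(\subob_T)^Y$ and Lemma~\ref{L:commutativediagwithYp} to write $j_{p,Y}$ as $(\subob^{Y^p}\circ\{-\}_Y)_T$ followed by that isomorphism. It then concludes because $\{-\}_Y$ is monic, $\subob^{Y^p}$ is monic ($Y^p$ being split by $Y^!$, your $\sigma$), and $(-)_T$ preserves monos.

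Your $R_a$ is exactly the subobject named by $\subob^{Y^p}\circ\{-\}_Y\circ\hat a$, and recovering $\hat a$ from it via $\sigma$ is a hands-on proof that this composite is monic. So the two routes differ only in packaging: you replace the object-level isomorphism and the lemma's large diagram with the pointwise transpose bookkeeping you flagged. That bookkeeping does go through; the identification $\widehat{Y_p}=Y^p$ is exactly the first triangle in that lemma's proof.
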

\begin{teorema}\label{T:relationbetweensingletonandj}
Let $\E$ be a topos and be $T\in\E$ an atomic object with a point $p:1\rightarrow T$. The following diagram commutes:
\[\xymatrix{
Y\ar[r]^{\{-\}_Y}\ar[d]_{Y_p} & \subob^Y\ar[d]^{(\subob_p)^Y} \\
Y_T\ar[r]_{j_{p,Y}} & (\subob_T)^Y
}\]
\end{teorema}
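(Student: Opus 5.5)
The plan is to check the square after transposing both composites under the exponential adjunction $(-)\times Y\dashv(-)^Y$. Then it reduces to an equality of two arrows $Y\times Y\to\subob_T$, which should follow from naturality of $(-)_p$ together with the fact that $(-)_T$, being a right adjoint, preserves products.

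First I fix the notation. Write $\delta_Y:Y\times Y\to\subob$ for the characteristic map of the diagonal, so that $\{-\}_Y$ is the transpose of $\delta_Y$. Since $(-)_T$ preserves products, there is a canonical isomorphism $\mu:(Y\times Y)_T\cong Y_T\times Y_T$ whose components are $(\pi_1)_T$ and $(\pi_2)_T$. Accordingly, $j_{p,Y}$ is the transpose of
\[
(\delta_Y)_T\circ\mu^{-1}\circ(1\times Y_p):Y_T\times Y\to\subob_T .
\]
I also identify $Y_1\cong Y$ via the counit $\varepsilon_{1,Y}$, which is an isomorphism, so that $Y_p:Y\to Y_T$ is the component of the natural transformation $(-)_p:(-)_1\Rightarrow(-)_T$ from \eqref{E:xtothefraction1overf}.

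The two transposes are computed as follows.
\begin{itemize}
\item For the top-right path: the transpose of $(\subob_p)^Y\circ\{-\}_Y$ is $\subob_p\circ\delta_Y$. This follows from naturality of the exponential transpose in the codomain.
\item For the left-bottom path: the transpose of $j_{p,Y}\circ Y_p$ is obtained by precomposing the transpose of $j_{p,Y}$ with $Y_p\times Y$. This gives
\[
(\delta_Y)_T\circ\mu^{-1}\circ(Y_p\times Y_p).
\]
\end{itemize}

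It therefore remains to show that $\subob_p\circ\delta_Y=(\delta_Y)_T\circ\mu^{-1}\circ(Y_p\times Y_p)$.
\begin{enumerate}
\item Naturality of $(-)_p$ at $\delta_Y$ gives $\subob_p\circ\delta_Y=(\delta_Y)_T\circ(Y\times Y)_p$. Here $(Y\times Y)_1$ and $\subob_1$ are identified with $Y\times Y$ and $\subob$.
\item So the claim reduces to $\mu\circ(Y\times Y)_p=Y_p\times Y_p$.
\item To see this, compose with each projection of $Y_T\times Y_T$. Naturality of $(-)_p$ at $\pi_i$ gives $(\pi_i)_T\circ(Y\times Y)_p=Y_p\circ\pi_i$, which is exactly the $i$-th component of $Y_p\times Y_p$.
\end{enumerate}

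The only real obstacle is bookkeeping. One must make sure that the identifications $X_1\cong X$ via $\varepsilon_{1,X}$ are compatible with the product isomorphism $\mu$ and with naturality of $(-)_p$. This compatibility holds because $(-)_f$ was defined uniformly through the counits in \eqref{E:xtothefraction1overf}, so it is natural in $X$. Granting that, the argument consists entirely of the naturality squares above.
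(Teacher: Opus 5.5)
Your proposal is correct and follows essentially the same route as the paper. Both arguments transpose the two composites under $(-)\times Y\dashv(-)^Y$, reduce to the equality $\subob_p\circ\delta_Y=(\delta_Y)_T\circ\mu^{-1}\circ(Y_p\times Y_p)$, and conclude from naturality of $(-)_p$ at $\delta_Y$. The one difference is that you verify explicitly, via naturality of $(-)_p$ at the projections, the identification $\mu\circ(Y\times Y)_p=Y_p\times Y_p$, which the paper uses without comment.
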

\begin{teorema}\label{T:functionsnj}
Let $\E$ be a topos and $T$ be an atomic object of $\E$ with a point $p:1\rightarrow T$. If $S$ is another atomic object of $\E$ and $f:T\rightarrow S$ is an arrow in $\E$, the following diagram commutes:
\begin{equation}\label{E:Subarrow and j}\xymatrix{
X_T\ar[r]^{X_f}\ar@{(>->}[d]_{j_{p,X}} & X_S\ar@{(>->}[d]^{j_{f\circ p,X}} \\
(\subob_T)^X\ar[r]_{(\subob_f)^X} & (\subob_S)^X.
}\end{equation}
(For more precision in notation since needed, a dependency is added, as a subindex,  on the point related to $j_X$.)
\end{teorema}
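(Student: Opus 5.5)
The identity to prove is an equality of arrows $X_T\to(\subob_S)^X$. I would pass to exponential transposes under $(-)\times X\dashv(-)^X$ and prove equality of the corresponding arrows $X_T\times X\to\subob_S$. By the definition in Theorem \ref{T:Monicity of j}, $j_{p,X}$ is the transpose of
\[
(\delta_X)_T\circ(1\times X_p)\colon X_T\times X\to X_T\times X_T\cong (X\times X)_T\to\subob_T,
\]
where $\delta_X\colon X\times X\to\subob$ is the equality predicate and $X_p\colon X\cong X_1\to X_T$. The isomorphism $X_T\times X_T\cong(X\times X)_T$ holds because $(-)_T$ is a right adjoint and hence preserves products. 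Transposition is natural in the codomain and in the first variable, so:
\begin{itemize}
\item the transpose of $(\subob_f)^X\circ j_{p,X}$ is $\subob_f\circ(\delta_X)_T\circ(1\times X_p)$;
\item the transpose of $j_{f\circ p,X}\circ X_f$ is $(\delta_X)_S\circ(1\times X_{f\circ p})\circ(X_f\times 1)=(\delta_X)_S\circ(X_f\times X_{f\circ p})$.
\end{itemize}

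Next I rewrite the first expression using naturality of $(-)_f\colon(-)_T\Rightarrow(-)_S$ at the arrow $\delta_X$. This gives
\[
\subob_f\circ(\delta_X)_T=(\delta_X)_S\circ(X\times X)_f .
\]
I then need that, under the product comparison isomorphisms, $(X\times X)_f$ corresponds to $X_f\times X_f$. This follows from naturality of $(-)_f$ with respect to the two projections $X\times X\to X$. The comparison isomorphisms are built from $(\pi_1)_T,(\pi_2)_T$ and $(\pi_1)_S,(\pi_2)_S$, so each component of $(X\times X)_f$ is forced to be $X_f$. The first expression therefore becomes $(\delta_X)_S\circ(X_f\times X_f\circ X_p)$.

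It remains to check that $X_f\circ X_p=X_{f\circ p}$, i.e. functoriality of $T\mapsto X_T$ on atomic objects, as asserted after \eqref{E:xtothefraction1overf}. Here $1$ is atomic with $X_1\cong X$, and $p$ is an arrow $1\to T$. If I wanted to verify this directly, I would use the defining square \eqref{E:xtothefraction1overf}: both $X_f\circ X_p$ and $X_{f\circ p}$ are characterized by the same composite under transposition, using $(-)^{f\circ p}=(-)^p\circ(-)^f$ and the uniqueness of transposes under $(-)^{1}\dashv(-)_1$. Substituting gives $(\delta_X)_S\circ(X_f\times X_{f\circ p})$, which equals the second transpose, and the square commutes.

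I expect the main obstacle to be bookkeeping rather than substance. The compatibility of $(-)_f$ with the product comparison isomorphisms and the identification $X_1\cong X$ inside $X_p$ must be handled so that no hidden twist appears. I would settle both by reducing to naturality of $(-)_f$ along projections and by the uniqueness clause of the adjunction $(-)^S\dashv(-)_S$.
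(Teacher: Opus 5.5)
Your proposal is correct and is essentially the paper's argument written externally. The paper checks the square in the internal language by computing $\subob_f(\xi=_T X_p(x))=(X_f(\xi)=_S X_{f\circ p}(x))$ from naturality of $(-)_f$, which amounts to your steps: naturality of $(-)_f$ at $\delta_X$, compatibility with the product comparison isomorphisms, and $X_f\circ X_p=X_{f\circ p}$.

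One small slip: that last equality follows from uniqueness of transposes under $(-)^S\dashv(-)_S$, since both sides are arrows into $X_S$, not under $(-)^1\dashv(-)_1$. Your final paragraph in fact states it correctly.
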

\begin{cor}
Let $\E$ be a topos and $T$ be an atomic object of $\E$ with a point $p:1\rightarrow T$. Then $j_{!\circ p,X}=\{-\}_X$.
\end{cor}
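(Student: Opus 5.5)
The corollary should follow at once from Theorem~\ref{T:functionsnj}, specialized to $S=1$ and $f={!}:T\to 1$; the only additional work is to check that the maps $X_{!}$, $\subob_{!}$ and $j_{1_1,X}$ in the resulting square are what one expects.

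First, since $1$ is atomic and $(-)^1\cong 1_\E$, its right adjoint $(-)_1$ can be taken to be the identity. Then $X_1=X$ and $\subob_1=\subob$, and the counit $\varepsilon_{1,X}$ is an isomorphism (the identity, as in the proof of Lemma~\ref{L:superTnsubTarefaithful}). Applying Theorem~\ref{T:functionsnj} with $S=1$ and $f={!}$, the point becomes ${!}\circ p=1_1$, and the square reads
\[
(\subob_{!})^X\circ j_{p,X}=j_{1_1,X}\circ X_{!}.
\]
So the claim to prove is that $j_{1_1,X}=\{-\}_X$, where $j_{1_1,X}$ is the map of Theorem~\ref{T:Monicity of j} for $T=1$. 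I read the corollary's $j_{!\circ p,X}$ in exactly this sense: it is the generalized singleton attached to the point ${!}\circ p=1_1$ of the atomic object $1$. This matches the remark in Theorem~A that ``$j_{1,X}=\{-\}_X$''.

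To prove the claim, I will use Theorem~\ref{T:relationbetweensingletonandj} with $T=1$ and the point $1_1$. This gives
\[
j_{1_1,X}\circ X_{1_1}=(\subob_{1_1})^X\circ\{-\}_X .
\]
Since $(-)_1$ is the identity functor, both $X_{1_1}$ and $\subob_{1_1}$ are identities. The latter holds because $(-)_{1_1}$ is the identity natural transformation: in diagram \eqref{E:xtothefraction1overf}, $f=1$ forces $X_f=1$ by uniqueness of transposes. The displayed equation therefore reduces to $j_{1_1,X}=\{-\}_X$, which is the corollary.

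The main point needing care is the identification of $(-)_1$ with the identity. If $(-)_1$ is only given up to a canonical isomorphism $\alpha_X: X_1\cong X$, then the equality holds only after conjugating by $\alpha_X$ and $(\alpha_\subob)^X$. I will fix this by choosing the right adjoint of $(-)^1\cong 1_\E$ to be $1_\E$, which determines all the maps above.

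As a cross-check, I could also unwind the definition of $j$ directly. With $T=1$, the map $(\delta_X)_1\circ(1\times X_{1_1})$ is just $\delta_X$, the characteristic map of the diagonal. Its transpose under the identification $(\subob_1)^X=\subob^X$ is $x\mapsto(y\mapsto y=x)$, which is $\{-\}_X$ by definition, matching \eqref{E:GenSingleton}.
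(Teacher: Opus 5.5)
Your proof is correct, but it is organized differently from the paper's.

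The paper keeps the given $T$ and pastes two squares. On the left is Theorem~\ref{T:relationbetweensingletonandj} for $(T,p)$; on the right is Theorem~\ref{T:functionsnj} for $f={!}:T\to 1$. By functoriality of $X_{(-)}$ and $\subob_{(-)}$, the top row $X_{!}\circ X_p$ and the bottom row $(\subob_{!})^X\circ(\subob_p)^X$ are identities, and $j_{!\circ p,X}=\{-\}_X$ follows.

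You instead note that ${!}\circ p=1_1$, so the statement does not depend on $T$ at all. You then apply Theorem~\ref{T:relationbetweensingletonandj} directly to the atomic object $1$ with point $1_1$, where $X_{1_1}$ and $\subob_{1_1}$ are identities. This is the paper's diagram with $T$ replaced by $1$, in which the right-hand square becomes trivial.

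Two consequences for your write-up:
\begin{enumerate}
\item The specialization of Theorem~\ref{T:functionsnj} in your first paragraph is never used and can be dropped.
\item Your ``cross-check'' is already a complete proof on its own. With $(-)_1=1_\E$, $j_{1_1,X}$ is by definition the transpose of $\delta_X$, which is $\{-\}_X$.
\end{enumerate}

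Your route shows that the corollary is essentially definitional. It rests only on identifying $(-)_1$ with the identity, which the paper also assumes tacitly when it writes $X$ and $\subob^X$ in the right-hand column. The paper's route instead shows how the generalized singletons $j_{p,X}$ of an arbitrary atomic $T$ collapse to the ordinary singleton along $X_{!}$. That is, it exhibits the compatibility of Theorems~\ref{T:relationbetweensingletonandj} and~\ref{T:functionsnj}, which is the point of the ``generalized singleton'' remark in Theorem A.
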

\begin{proof}
By \ref{T:relationbetweensingletonandj} and \ref{T:functionsnj}, the following diagram commutes:
\[\xymatrix{
X\ar[r]_{X_p}\ar[d]_{\{-\}_X}\ar@/^1.2pc/[rr]^1 & X_T\ar[r]_{X_{!}}\ar[d]_{j_{p,X}} & X\ar[d]^{j_{!\circ p,X}} \\
\subob^X\ar[r]^(.45){(\subob_p)^X}\ar@/_1.2pc/[rr]_1 & (\subob_T)^X\ar[r]^(.55){(\subob_{!})^X} & \subob^X
}\]
\end{proof}
\begin{teorema}\label{T:CounitOmega}
Let $\E$ be a topos and $T\in\E$ an atomic object with a point $p:1\rightarrow T$. The following diagram commutes:
\[\xymatrix{
(X_T)^T\ar[r]^{(j_{p,X})^T}\ar[d]_{\epsilon_X} & ((\subob_T)^X)^T\ar[r]^{\alpha_{\subob_T}} & ((\subob_T)^T)^X\ar[d]^{(\epsilon_\subob)^X} \\
X\ar[rr]_{\{-\}_X} & & \subob^X,
}\]
where $\alpha:((-)^T)^X\Rightarrow((-)^X)^T$ is the exponent swapping natural transformation.
\end{teorema}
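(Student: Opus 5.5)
The plan is to compare both composites after transposing them under $(-)\times X\dashv(-)^X$. Both become arrows $(X_T)^T\times X\to\subob$, and the goal is to show that both equal $\delta_X\circ(\varepsilon_X\times 1_X)$, where $\delta_X:X\times X\to\subob$ classifies the diagonal. For the bottom path this is immediate, since $\{-\}_X$ is by definition the transpose of $\delta_X$. So the real work is the top path $(\epsilon_\subob)^X\circ\alpha_{\subob_T}\circ(j_{p,X})^T$.

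\textbf{Step 1: unwind the top path.} Recall that $j_{p,X}$ is the transpose of
\[
(\delta_X)_T\circ(1\times X_p):X_T\times X\to X_T\times X_T\cong(X\times X)_T\to\subob_T,
\]
where we use that $(-)_T$, being a right adjoint, preserves products. Transposing $(j_{p,X})^T$ and then applying the swap $\alpha$ amounts to the following. Start from $(X_T)^T\times X$. Map the $X$ factor into $X^T$ by the constant map $\sigma_X=\sigma^T_X$. Use $(X_T)^T\times X^T\cong(X_T\times X)^T$, then apply $((\delta_X)_T\circ(1\times X_p))^T$, and finally $\epsilon_\subob$.

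Since $(-)^T$ also preserves products, the arrow $(1\times X_p)^T$ is identified with $1\times(X_p)^T$. So the transposed top path becomes
\[
\epsilon_\subob\circ((\delta_X)_T)^T\circ\bigl(1\times (X_p)^T\sigma_X\bigr).
\]
Here $(X_T)^T\times (X_T)^T\cong((X\times X)_T)^T$ is implicit. This bookkeeping of $\alpha$ against the two product-preservation isomorphisms is the step I expect to be the main nuisance. I would handle it by checking everything on generalized elements, using the internal description $\xi\mapsto(x\mapsto X_p(x)=_T\xi)$.

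\textbf{Step 2: naturality of the counit.} Naturality of $\epsilon$ gives $\epsilon_\subob\circ((\delta_X)_T)^T=\delta_X\circ\epsilon_{X\times X}$. The counit of an adjunction between product-preserving functors is compatible with products, so $\epsilon_{X\times X}$ corresponds to $\epsilon_X\times\epsilon_X$. Hence the transposed top path equals
\[
\delta_X\circ\bigl(\epsilon_X\times \epsilon_X(X_p)^T\sigma_X\bigr).
\]

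\textbf{Step 3: show $\epsilon_X\circ(X_p)^T\circ\sigma_X=1_X$.} Apply diagram \eqref{E:xtothefraction1overf} with $S=1$ and $f=p$. It gives
\[
\epsilon_{T,X}\circ(X_p)^T=\epsilon_{1,X}\circ(X_1)^p .
\]
Under $X_1\cong X$ and $\epsilon_{1,X}\cong 1$, this says $\epsilon_{T,X}\circ(X_p)^T=X^p$, the evaluation at $p$. Since $X^p\circ\sigma_X=1_X$ (as $!\circ p=1_1$), we get the identity.

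Therefore the transposed top path is $\delta_X\circ(\epsilon_X\times 1_X)$. This is exactly the transpose of $\{-\}_X\circ\epsilon_X$, which proves the commutativity.
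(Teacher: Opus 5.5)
Your proof is correct, but it is organized differently from the paper's.

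\textbf{The paper's route.} The paper does not transpose the square. Using Lemma \ref{L:commutativediagwithYp} and the monicity argument, it rewrites $j_{p,X}$ as a composite of three maps:
$(\{-\}_X)_T$, then $(\subob^{X^p})_T$, then the isomorphism $(\subob^{X^T})_T\cong(\subob_T)^X$, whose transpose is described through the unit $\eta$.
It then chases a large diagram:
\begin{itemize}
\item naturality of $\varepsilon$ moves $\{-\}_X$ and $\subob^{X^p}$ past the counit;
\item Lemmas \ref{L:distdistswapequalswapdistdist} and \ref{L:swappingwithsigmannames} track how $\alpha$ interacts with $\beta$ and $\sigma$;
\item the triangle identity removes $\eta$;
\item a final internal-language computation shows $\subob^{\sigma_X}\circ\subob^{X^p}=1$.
\end{itemize}
That last identity is the contravariant form of your $X^p\circ\sigma_X=1_X$.

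\textbf{Your route.} You work from the defining description of $j_{p,X}$. All the swap bookkeeping then reduces to one identity: evaluating $\alpha(u)$ at $x$ equals $\ev^T$ applied to $u$ paired with the constant map $\sigma_X(x)$. The rest is naturality of $\varepsilon$, its compatibility with products, and $\varepsilon_X\circ(X_p)^T=X^p$. The paper only records that last fact later, in Section \ref{S:McLartyAtoms}, as $\ev^0_X=\varepsilon_X\circ(X_0)^T$.

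\textbf{Comparison.} Your argument is shorter and shows why the square commutes: $X_p$ is the transpose of evaluation at $p$. The paper's argument instead reuses the factorization through $(\{-\}_X)_T$ that also proves $j_{p,X}$ is monic.

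\textbf{Points to check in a write-up.}
\begin{itemize}
\item The product isomorphisms for $(-)_T$ and $(-)^T$ must be the canonical ones, as in the definition of $j_{p,X}$. Only then does $\varepsilon_{X\times X}$ correspond to $\varepsilon_X\times\varepsilon_X$; this follows from naturality of $\varepsilon$ at the projections.
\item The identification $X_1\cong X$ must be the one that makes $\varepsilon_{1,X}$ the canonical $X^1\cong X$. The same identification must be used for the $X_p$ appearing inside $j_{p,X}$.
\end{itemize}
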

The following lemma is required to prove the monicity of $j_{p,X}$.
\begin{lema}\label{L:commutativediagwithYp}
Let $\E$ be a topos and $T$ be a atomic object with a point $p:1\rightarrow T$. The following diagram commutes:
\[\xymatrix{
Y_T\times Y\ar[r]^(.45){(\widehat{\delta_Y})_T\times 1}\ar[d]_{1\times Y_p} & (\subob^Y)_T\times Y\ar[r]^{(\subob^{Y^p})_T\times 1} & (\subob^{Y^T})_T\times Y\ar[d]^{\eta_{T,(\subob^{Y^T})_T\times Y}} \\
Y_T\times Y_T\ar[d]_\xi & & (((\subob^{Y^T})_T\times Y)^T)_T\ar[d]^{\beta_T} \\
(Y\times Y)_T\ar[ddrr]_{(\delta_Y)_T} & & (((\subob^{Y^T})_T)^T\times Y^T)_T\ar[d]^{(\varepsilon_{T,\subob^{Y^T}}\times 1)_T} \\
& & (\subob^{Y^T}\times Y^T)_T\ar[d]^{\ev_T} \\
& & \subob_T
}\]

\end{lema}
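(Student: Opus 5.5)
The plan is to compare the two composites $Y_T\times Y\to\subob_T$ through their transposes under $(-)^T\dashv(-)_T$. Two maps into $\subob_T$ agree iff their transposes $(Y_T\times Y)^T\to\subob$ agree. Since $(-)^T$ preserves products, I will precompose with the canonical iso $\beta\colon (Y_T\times Y)^T\cong (Y_T)^T\times Y^T$. I will then show that both transposes equal
\[
\delta_Y\circ(\varepsilon_{T,Y}\times Y^p)\colon (Y_T)^T\times Y^T\to Y\times Y\to\subob .
\]
In the internal language this reads $(u,v)\mapsto [\,\varepsilon(u)=v(p)\,]$.

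\emph{Right-hand path.} It has the form $g_T\circ\eta_{T,A}$ with $A=(\subob^{Y^T})_T\times Y$. Its transpose is therefore the composite $\ev\circ(\varepsilon_{T,\subob^{Y^T}}\times 1)\circ\beta$ precomposed with $\bigl(((\subob^{Y^p}\circ\widehat{\delta_Y})_T)\times 1\bigr)^T$. By naturality of $\varepsilon_T$,
\[
\varepsilon_{T,\subob^{Y^T}}\circ\bigl((\subob^{Y^p}\circ\widehat{\delta_Y})_T\bigr)^T=\subob^{Y^p}\circ\widehat{\delta_Y}\circ\varepsilon_{T,Y}.
\]
So the transpose sends $(u,v)$ to $\ev(\subob^{Y^p}(\{\varepsilon(u)\}),v)=\{\varepsilon(u)\}(v(p))$. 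This is exactly $\delta_Y\circ(\varepsilon_{T,Y}\times Y^p)$, using that $\widehat{\delta_Y}=\{-\}_Y$ is the transpose of $\delta_Y$.

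\emph{Left-hand path.} Naturality of $\varepsilon$ at $\delta_Y$ gives that the transpose of $(\delta_Y)_T\circ\xi\circ(1\times Y_p)$ is $\delta_Y\circ\varepsilon_{T,Y\times Y}\circ(\xi\circ(1\times Y_p))^T$. Two facts then finish the argument. The first is the standard compatibility of the comparison iso $\xi\colon Y_T\times Y_T\cong (Y\times Y)_T$ (for a right adjoint preserving products) with the counit:
\[
\varepsilon_{T,Y\times Y}\circ\xi^T=(\varepsilon_{T,Y}\times\varepsilon_{T,Y})\circ\beta .
\]
This holds because $\xi$ is the map whose composites with the projections are $(\pi_i)_T$, and $\varepsilon$ is natural in $\pi_i$. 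The second is $\varepsilon_{T,Y}\circ(Y_p)^T=Y^p$ (modulo $Y_1\cong Y$), which is diagram \eqref{E:xtothefraction1overf} with $S=1$ and $f=p$, using that $\varepsilon_{1,Y}$ is an isomorphism. Combining these, the transpose becomes $\delta_Y\circ(\varepsilon_{T,Y}\times Y^p)\circ\beta$, agreeing with the right-hand side.

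The main obstacle is bookkeeping rather than a conceptual step. I expect it to lie in checking that the same product isomorphism $\beta$ appears consistently on both sides, and in identifying $\varepsilon_{T,Y\times Y}\circ\xi^T$ with $\varepsilon\times\varepsilon$. I would verify the latter componentwise by composing with the two projections and using naturality of $\varepsilon$ together with $\pi_i^T\circ\beta^{-1}=\pi_i$.
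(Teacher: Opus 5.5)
Your proposal is correct, but it runs the argument on the other side of the adjunction from the paper.

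\textbf{The paper's route.} The paper never transposes. It factors $Y_p=(Y^p)_T\circ\eta_{T,Y}$. It then splits $\eta_{T,(\subob^{Y^T})_T\times Y}$ as $\eta_{T,(\subob^{Y^T})_T}\times\eta_{T,Y}$, using that $((-)^T)_T$ preserves products, and cancels $(\varepsilon_{T,\subob^{Y^T}})_T\circ\eta_{T,(\subob^{Y^T})_T}$ by a triangle identity. Finally it applies $(-)_T$ to the dinaturality square $\ev\circ(1\times Y^p)=\ev\circ(\subob^{Y^p}\times 1)$ and to $\ev\circ(\widehat{\delta_Y}\times 1)=\delta_Y$. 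The whole chase thus stays inside the image of $(-)_T$.

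\textbf{Your route.} You transpose both composites to maps $(Y_T)^T\times Y^T\to\subob$ and work with three facts:
\begin{itemize}
\item naturality of $\varepsilon$;
\item the compatibility $\varepsilon_{T,Y\times Y}\circ\xi^T=(\varepsilon_{T,Y}\times\varepsilon_{T,Y})\circ\beta$, which is the counit-side counterpart of the paper's splitting of $\eta$;
\item the defining identity $\varepsilon_{T,Y}\circ(Y_p)^T=Y^p$.
\end{itemize}
The paper's two basic identities (dinaturality of $\ev$ along $Y^p$ and the defining property of the singleton) reappear in your step $\ev(\subob^{Y^p}(\{\varepsilon(u)\}),v)=\{\varepsilon(u)\}(v(p))$.

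\textbf{What each buys.} Your version gives an explicit formula for the common transpose, namely $\delta_Y\circ(\varepsilon_{T,Y}\times Y^p)\circ\beta$, i.e.\ $(u,v)\mapsto[\varepsilon(u)=v(p)]$. This makes the content of the lemma transparent: both paths compare $\varepsilon(u)$ with $v(p)$. The paper's version keeps everything in the form $(\ldots)_T\circ\eta$, which is the shape in which the lemma is used to identify the transpose $\hat\varphi$ of the isomorphism $(\subob^{Y^T})_T\cong(\subob_T)^Y$. Both need the same amount of product-isomorphism bookkeeping, and you correctly identify that as the only real work.
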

\begin{proof}
The following diagrams commute:
\[\xymatrix{
Y\ar[r]^(.4){\eta_{T,Y}}\ar[dr]_{Y_p} & (Y^T)_T\ar[d]^{(Y^p)_T} \\
& Y_T
}\]
\[\xymatrix{
\subob^Y\times Y^T\ar[r]^(.53){1\times Y^p}\ar[d]_{\subob^{Y^p}\times 1} & \subob^Y\times Y\ar[d]^\ev \\
\subob^{Y^T}\times Y^T\ar[r]_(.6)\ev & \subob
}\]
\[\xymatrix{
Y\times Y\ar[dr]^{\delta_Y}\ar[d]_{\widehat{\delta_Y}\times 1} & \\
\subob^Y\times Y\ar[r]_(.6)\ev & \subob
}\]
Since $\eta_T:1\Rightarrow((-)^T)_T$ is a natural transformation and $((-)^T)_T$ preserves products, the following diagram commutes:
\[\xymatrix{
(\subob^{Y^T})_T\times Y\ar[rrr]^(.42){\eta_{T,(\subob^{Y^T})_T}\times\eta_{T,Y}}\ar[drrr]_{\eta_{T,(\subob^{Y^T})_T\times Y}} & & & (((\subob^{Y^T})_T)^T)_T\times(Y^T)_T\ar@{}@<-.5ex>[d]^[right]{\cong} \\
& & & (((\subob^{Y^T})_T\times Y)^T)_T
}\]
Hence, by the commutativity of the previous diagrams, the following diagram commutes:
\[\xymatrix{
Y_T\times Y\ar[r]^(.45){(\widehat{\delta_Y})_T\times 1}\ar[d]_{1\times\eta_{T,Y}} & (\subob^Y)_T\times Y\ar[r]^(.47){(\subob^{Y^p})_T\times 1}\ar[d]_{1\times\eta_{T,Y}} & (\subob^{Y^T})_T\times Y\ar[d]^{1\times\eta_{T,Y}} \\
Y_T\times (Y^T)_T\ar[r]^(.43){(\widehat{\delta_Y})_T\times 1}\ar[d]_{1\times(Y^p)_T} & (\subob^Y)_T\times (Y^T)_T\ar[r]^(.48){(\subob^{Y^p})_T\times 1}\ar[d]_{1\times(Y^p)_T} & (\subob^{Y^T}\times(Y^T)_T\ar@/^1.5pc/[ddl]^{\ev_T} \\
Y_T\times Y_T\ar@/_1pc/[dr]_{(\delta_Y)_T}\ar[r]^(.45){(\widehat{\delta_Y})_T\times 1} & (\subob^Y\times Y)_T\ar[d]_{\ev_T} & \\
& \subob_T & 
}\]
\end{proof}

\begin{proof}[Proof of \ref{T:Monicity of j}]
Let $\vartheta$ denote the composite
\[\xymatrix{
Y_T\ar[r]^(.43){(\widehat{\delta_Y})_T} & (\subob^Y)_T\ar[r]^(.45){(\subob^{Y^p})_T} & (\subob^{Y^T})_T.
}\]
Now, consider the following sequence of bijections natural in $X$:
\[\begin{tabular}{c}
$X\rightarrow(\subob^{Y^T})_T$ \\
\hline
$X^T\rightarrow\subob^{Y^T}$ \\
\hline
$X^T\times Y^T\rightarrow\subob$ \\
\hline
$(X\times Y)^T\rightarrow\subob$ \\
\hline
$X\times Y\rightarrow\subob_T$ \\
\hline
$X\rightarrow(\subob_T)^Y$.
\end{tabular}\]
Therefore $(\subob^{Y^T})_T\cong(\subob_T)^Y$. Let $\varphi$ denote that isomorphism. From the sequence of natural bijections above, it is possible to calculate the transpose $\hat{\varphi}:(\subob^{Y^T})_T\times Y\rightarrow\subob_T$ of $\varphi$: it is precisely the composite arrow from $(\subob^{Y^T})_T\times Y$ to $\subob_T$ in the diagram in \ref{L:commutativediagwithYp}.

Since $Y^pY^!=1$, $Y^p$ is epic. Hence $\subob^{Y^p}$ is monic. Clearly $\widehat{\delta_Y}$ is monic since it is $\{-\}$. Now, as $(-)_T$ is right adjoint, it preserves monics. Hence $\varphi\vartheta$ is monic, whose transpose is $\hat{\varphi}\circ\vartheta\times 1=(\delta_Y)_T\circ 1\times Y_p$ by \ref{L:commutativediagwithYp}.
\end{proof}

\begin{proof}[Proof of \ref{T:relationbetweensingletonandj}]
The diagram commutes if the transposes of the different paths in the diagram commute. The following commtative diagrams yield the respective transposes:
\[\xymatrix{
Y\times Y\ar[d]_{\widehat{\delta_y}\times 1}\ar[dr]^{\delta_Y} & \\
\subob^Y\times Y\ar[r]_(.6)\ev\ar[d]_{(\subob_p)^Y\times 1} & \subob\ar[d]^{\subob_p} \\
(\subob_T)^Y\times Y\ar[r]_(.62)\ev & \subob_T
}\]
and
\[\xymatrix{
Y\times Y\ar[d]_{Y_p\times 1} & \\
Y_T\times Y\ar[d]_{j_{p,Y}\times 1}\ar[dr]^{(\delta_Y)_T\circ 1\times Y_p} & \\
(\subob_T)^Y\times Y\ar[r]_(.6)\ev & \subob_T.
}\]
That is, the following diagram should commute:
\[\xymatrix{
Y\times Y\ar[rr]^{\delta_Y}\ar[d]_{Y_p\times 1} & & \subob\ar[d]^{\subob_p} \\
Y_T\times Y\ar[r]_{1\times Y_p} & Y_T\times Y_T\ar[r]_(.6){(\delta_Y)_T} & \subob_T
}\]
But it clearly commutes since the diagram
\[\xymatrix{
Y\times Y\ar[r]^(.6){\delta_Y}\ar[d]_{(Y\times Y)_p} & \subob\ar[d]^{\subob_p} \\
(Y\times Y)_T\ar[r]_(.62){(\delta_Y)_T} & \subob_T
}\]
commutes.
\end{proof}

\begin{proof}[Proof of \ref{T:functionsnj}]
Since $\subob_f\circ(\{-\}_X)_T=(\{-\}_X)_S\circ X_f$,  internally the diagram \eqref{E:Subarrow and j} commutes:
\[\xymatrix{
\xi\ar@{|->}[d]\ar@{|->}[rr] & & X_f(\xi)\ar@{|->}[d] \\
(x\mapsto\xi=_TX_p(x))\ar@<-.7ex>@{|->}[rr] & & *\txt{$(x\mapsto X_f(\xi)=_SX_{f\circ p}(x))$\\$(x\mapsto\subob_f(\xi=_TX_p(x))$}
}\]
\end{proof}

\begin{lema}\label{L:distdistswapequalswapdistdist}
Let $\E$ be a topos. If $X,T,A,B\in\E$ and $\alpha$ is the induced natural isomorphism $((-)^X)^T\Rightarrow((-)^T)^X$ and $\beta_{A,B}$ is the induced natural isomorphism $(A\times B)^X\cong A^X\times B^X$ for any $A,B\in\E$, then the following diagram commutes:
\[\xymatrix{
((A\times B)^X)^T\ar[r]^{\beta^T}\ar[d]_{\alpha_{A\times B}} & (A^X\times B^X)^T\ar[r]^\beta & (A^X)^T\times (B^X)^T\ar[d]^{\alpha_A\times\alpha_B} \\
((A\times B)^T)^X\ar[r]_{\beta^X} & (A^T\times B^T)^X\ar[r]_\beta & (A^T)^X\times (B^T)^X,
}\]
\end{lema}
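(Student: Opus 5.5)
The lemma is a coherence statement between two canonical isomorphisms, so I plan to prove it by the Yoneda lemma. Both composites are arrows into the product $(A^T)^X\times (B^T)^X$. It therefore suffices to show that they agree after postcomposition with each of the two product projections, and by symmetry I treat only the projection onto $(A^T)^X$.

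First I recall how each isomorphism is characterised. The map $\beta_{A,B}:(A\times B)^X\to A^X\times B^X$ is $\langle \pi_A^X,\pi_B^X\rangle$, where $\pi_A,\pi_B$ are the projections of $A\times B$. The swap $\alpha_C:(C^X)^T\to (C^T)^X$ is natural in $C$. It is the composite of the canonical isomorphisms $(C^X)^T\cong C^{X\times T}\cong C^{T\times X}\cong (C^T)^X$, each of which is natural in $C$.

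Second, I compute the upper path followed by the first projection. By definition of $\beta$, composing $\beta:(A^X\times B^X)^T\to (A^X)^T\times(B^X)^T$ with the first projection gives $(\mathrm{pr}_1)^T$. Composing this with $\beta^T$ and using functoriality of $(-)^T$ yields $(\mathrm{pr}_1\circ\beta)^T=(\pi_A^X)^T$. Postcomposing with $\alpha_A$ then gives $\alpha_A\circ(\pi_A^X)^T$.

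Third, the lower path followed by the first projection gives $(\mathrm{pr}_1\circ \beta)^X\circ\alpha_{A\times B}=(\pi_A^T)^X\circ\alpha_{A\times B}$, in the same way. The two expressions agree exactly by naturality of $\alpha$ in $C$, applied to the arrow $\pi_A:A\times B\to A$. The $B$-component is identical. Since arrows into a product are determined by their components, the square commutes.

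The only step needing care is the naturality of $\alpha$ in the base object $C$. Everything else is functoriality of exponentials and the universal property of products. If one prefers not to invoke naturality abstractly, I would verify it by transposing: under $(-)\times X\times T\dashv (-)^{X\times T}$, both $\alpha_A\circ(\pi_A^X)^T$ and $(\pi_A^T)^X\circ\alpha_{A\times B}$ transpose to $\pi_A\circ \ev$, where $\ev$ is the evaluation $((A\times B)^X)^T\times X\times T\to A\times B$ up to the symmetry of the product. This reduces the check to the equation $\ev\circ(h^{Z}\times Z)=h\circ\ev$, which is immediate. Alternatively, in the internal language of the topos, both paths send $\lambda t.\lambda x.(a(t,x),b(t,x))$ to the pair $(\lambda x.\lambda t.a(t,x),\lambda x.\lambda t.b(t,x))$.
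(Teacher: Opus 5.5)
Your proof is correct, but it takes a different route from the paper: you argue structurally, while the paper computes with elements.

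The paper works entirely in the internal language. It writes an element $u$ of $((A\times B)^X)^T$ as $t\mapsto(x\mapsto\langle u_1(t)(x),u_2(t)(x)\rangle)$. It pushes $u$ along the top path to $\langle x\mapsto(t\mapsto u_1(t)(x)),\,x\mapsto(t\mapsto u_2(t)(x))\rangle$. It then computes only the first step of the bottom path, $\alpha_{A\times B}(u)$, and asserts it is ``clear'' that the rest arrives at the same pair. That is essentially your closing fallback sentence.

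Your main argument instead splits the target product into its components. Functoriality of $(-)^T$ and $(-)^X$ identifies the first components of the two paths as $\alpha_A\circ(\pi_A^X)^T$ and $(\pi_A^T)^X\circ\alpha_{A\times B}$. You then conclude by naturality of $\alpha$ at $\pi_A$. Along the way you correctly read the two middle $\beta$'s and the $\beta$'s appearing inside $\beta^T$ and $\beta^X$ as distributivity isomorphisms for different exponents, even though the statement names only the one for $X$.

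What your route buys:
\begin{itemize}
\item It isolates the one fact doing the work, the naturality of the exponent swap in the base object.
\item It makes explicit the bottom-path computation the paper leaves to the reader.
\item It needs only cartesian closedness, so the topos hypothesis is superfluous.
\end{itemize}
The paper's route is shorter once the internal $\lambda$-calculus is accepted. It also matches the neighbouring lemma on $\sigma$ and names, which is proved by the same kind of internal computation.
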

\begin{proof}
By using the internal language of $\E$, if $u\in((A\times B)^X)^T$, one has
\[
u=(t\mapsto(x\mapsto\langle u_1(t)(x),u_2(t)(x)\rangle)).
\]
Hence, if applying $\beta^T$ to it, one gets
\[
(t\mapsto\langle(x\mapsto u_1(t)(x)),(x\mapsto u_2(t)(x))\rangle).
\]
Applying $\beta$ in turn, one gets
\[
\langle(t\mapsto(x\mapsto u_1(t)(x))),(t\mapsto(x\mapsto u_2(t)(x)))\rangle.
\]
Finally  swapping exponents, it follows that
\[
\langle(x\mapsto(t\mapsto u_1(t)(x))),(x\mapsto(t\mapsto u_2(t)(x)))\rangle.
\]

On the other hand, applying $\alpha_{A\times B}$ to $u$, one gets
\[
(x\mapsto(t\mapsto\langle u_1(t)(x),u_2(t)(x)\rangle)).
\]
It is clear that continuing with the process established by the other path in the diagram, one arrives at the same result.
\end{proof}

\begin{lema}\label{L:swappingwithsigmannames}
Let $\E$ be a topos. If $A,X,T\in\E$ and $\alpha:((-)^X)^T\Rightarrow((-)^T)^X$ is the exponent swapping natural transformation then
\[
\alpha_A\circ(\sigma_A^X)^T=\sigma_{A^T}^X\qquad\text{and}\qquad\alpha_X\circ(\text{`$1_X$'})^T=\text{`$\sigma_X^T$'}.
\]
\end{lema}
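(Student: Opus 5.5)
Both identities are equalities of arrows into an exponential, so the plan is to verify them in the internal language of $\E$, the way Lemma~\ref{L:distdistswapequalswapdistdist} was checked. Throughout, $\sigma^X_A:A\to A^X$ denotes the transpose of the projection $A\times X\to A$, which internally is $a\mapsto(x\mapsto a)$. The exponent swap $\alpha_A:(A^X)^T\to(A^T)^X$ is internally $u\mapsto(x\mapsto(t\mapsto u(t)(x)))$. The notation `$g$' for $g:B\to C$ means the name $1\to C^B$, internally the constant element $(b\mapsto g(b))$.

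For the first identity, take a generalized element $v\in A^T$. Applying $(\sigma^X_A)^T$, which is postcomposition with $\sigma^X_A$, gives $t\mapsto(x\mapsto v(t))$. Applying $\alpha_A$ then gives $x\mapsto(t\mapsto v(t))$, which is $x\mapsto v$. This is exactly $\sigma^X_{A^T}(v)$, so the two arrows $A^T\to(A^T)^X$ agree.

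For the second identity, first note the typing. Here `$1_X$' is an arrow $1\to X^X$, so $(\text{`$1_X$'})^T:1\cong1^T\to(X^X)^T$. Internally it is the element $t\mapsto(x\mapsto x)$. Applying $\alpha_X$ gives $x\mapsto(t\mapsto x)$, and this is $x\mapsto\sigma^T_X(x)$, the name of $\sigma^T_X:X\to X^T$ as an element of $(X^T)^X$.

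If a diagrammatic proof is preferred, both identities follow by transposing twice across $(-)\times T\dashv(-)^T$ and $(-)\times X\dashv(-)^X$. Recall that $\alpha$ is defined as the composite of the bijections $\E(B,(A^X)^T)\cong\E(B\times T\times X,A)\cong\E(B,(A^T)^X)$, using the symmetry $T\times X\cong X\times T$. In the first identity both sides then transpose to the projection $A^T\times X\times T\to A^T\times T\xrightarrow{\ev}A$. In the second, both sides transpose to the projection $1\times T\times X\to X$.

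The only real obstacle is bookkeeping. One must keep straight which exponent is outer and use the identification $1^T\cong1$ for the name. Once the internal descriptions of $\sigma$, $\alpha$ and names are fixed, each identity is a one-line computation.
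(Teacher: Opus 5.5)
Your proposal is correct and is essentially the paper's proof. The paper verifies both identities by the same internal-language computations: first $v\mapsto(t\mapsto(x\mapsto v(t)))\mapsto(x\mapsto v)$, then $(t\mapsto(x\mapsto x))\mapsto(x\mapsto(t\mapsto x))$ on the domain $1^T\cong 1$. Your closing double-transposition argument is a valid but optional repackaging of the same check.
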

\begin{proof}
By using the internal language, one gets
\[
(\sigma_A^X)^T=(u\mapsto\sigma_A^X\circ u)\quad\text{and}\quad\sigma_{A^T}^X=(u\mapsto(x\mapsto u)).
\]
On the other hand,
\[
\sigma_A^X\circ u =(a\mapsto(x\mapsto a))\circ(t\mapsto u(t))=(t\mapsto(x\mapsto u(t)).
\]
Hence, by swapping exponents, one gets
\[
(x\mapsto(t\mapsto u(t))=(x\mapsto u).
\]
Therefore $\alpha_A\circ(\sigma_A^X)^T=\sigma_{A^T}^X$.

Now, internally,
\[
(\text{`$1_X$'})^T=(v\mapsto\text{`$1_X$'}\circ v).
\]
Hence
\[
(v\mapsto\text{`$1_X$'}\circ v)= (\ast\mapsto(x\mapsto x))\circ(t\mapsto\ast)=(t\mapsto(x\mapsto x)).
\]
By swapping exponents, one gets
\[
(x\mapsto(t\mapsto x)).
\]
That is, one gets $(v\mapsto\sigma_X^T):1^T\rightarrow(X^T)^X$, which is `$\sigma_X^T$'.
\end{proof}
\begin{proof}[Proof of \ref{T:CounitOmega}]
Let $\eta$ and $\varepsilon$ be the unit and counit of the adjunction $(-)^T\dashv(-)_T$, resp. Let $\vartheta$ and $\ev$ be the unit and counit of the adjuntion $(-)\times T\dashv(-)^T$. Let $\alpha$ be the induced natural isomorphism $((-)^X)^T\Rightarrow((-)^T)^X$ for $X\in\E$ and let $\beta_{A,B}$ be the induced natural isomorphism $(A\times B)^X\cong A^X\times B^X$ for any $A,B\in\E$. (We drop the subindices whe no risk of confusion). Denote the composite $\ev\circ\varepsilon\times 1\circ\beta$ by $\varphi$. The following outer diagram commutes:
\[\xymatrix{
(X_T)^T\ar[d]_{((\{-\})_T)^T}\ar[rr]^{\varepsilon_X}\ar@{}[ddrrr]|{(1)} & & X\ar[r]^{\{-\}} & \subob^X\ar[d]^{\subob^{X^p}} \\
((\subob^X)_T)^T\ar[d]_{((\subob^{X^p})_T)^T} & & & \subob^{X^T}\ar[d]^{\langle\sigma,\text{`$\sigma$'}!\rangle}\\
((\subob^{X^T})_T)^T\ar[dd]_{\left(\vartheta_{(\subob^{X^T})_T}\right)^T}\ar[r] & \bullet\ar[r]^{\alpha\times\alpha} & \bullet\ar[r]^(.3){\varepsilon^X\times 1} & (\subob^{X^T})^X\times(X^T)^X\ar[dd]^{\beta^{-1}} \\
& & & \\
(((\subob^{X^T})_T\times X)^X)^T\ar[d]_{\left(\left(\eta_{(\subob^{X^T})_T\times X}\right)^X\right)^T}\ar@/^.4pc/[uur]\ar@/_1pc/[uurr]\ar@<4ex>@{}[urr]|{(3)}\ar@<4ex>@{}[uur]|{(2)} & & & (\subob^{X^T}\times X^T)^X\ar[dd]^{\ev^X}\\
(((((\subob^{X^T})_T\times X)^T)_T)^X)^T\ar[d]_{((\varphi_T)^X)^T} & & & \\
((\subob_T)^X)^T\ar[rr]_{\alpha_{\subob_T}}\ar@{}[uuurrr]|{(4)} & & ((\subob_T)^T)^X\ar[r]_(.6){(\varepsilon_\subob)^X} & \subob^X
}\]
Indeed, Diagram (4) commutes:
\[\xymatrix{
(((\subob^{X^T})_T\times X)^X)^T\ar[rr]^{\left(\left(\eta_{(\subob^{X^T})_T\times X}\right)^X\right)^T}\ar[d]_\alpha & & (((((\subob^{X^T})_T\times X)^T)_T)^X)^T\ar[r]^(.7){((\varphi_T)^X)^T}\ar[d]_\alpha & ((\subob_T)^X)^T\ar[d]^{\alpha_{\subob_T}} \\
(((\subob^{X^T})_T\times X)^T)^X\ar[rr]^{\left(\left(\eta_{(\subob^{X^T})_T\times X}\right)^T\right)^X}\ar@/_1.3pc/[drr]_1\ar[d]_{\beta^X} & & (((((\subob^{X^T})_T\times X)^T)_T)^T)^X\ar[r]^(.7){((\varphi_T)^T)^X}\ar[d]_{\left(\varepsilon_{((\subob^{X^T})_T\times X)^T}\right)^X} & ((\subob_T)^T)^X\ar[d]^{(\varepsilon_\subob)^X} \\
(((\subob^{X^T})_T)^T\times X^T)^X\ar@/_.7pc/[drr]_{(\varepsilon\times 1)^X}\ar[d]_\beta & & (((\subob^{X^T})_T\times X)^T)^X\ar[r]_(.65){\varphi^X} & \subob^X \\
(((\subob^{X^T})_T)^T)^X\times (X^T)^X\ar@/_.7pc/[drr]_{\varepsilon^X\times 1} & & (\subob^{X^T}\times X^T)^X\ar@/_.7pc/[ur]_{\ev^X} & \\
& & (\subob^{X^T})^X\times(X^T)^X\ar[u]_{\beta^{-1}}; &
}\]
by \ref{L:distdistswapequalswapdistdist}, Diagram (3) commutes:
\[\xymatrix{
(((\subob^{X^T})_T\times X)^X)^T\ar[r]^{\beta^T}\ar[d]_\alpha & (((\subob^{X^T})_T)^X\times X^X)^T\ar[r]^\beta & (((\subob^{X^T})_T)^X)^T\times (X^X)^T\ar[d]^{\alpha\times\alpha} \\
(((\subob^{X^T})_T\times X)^T)^X\ar[r]_{\beta^X} & (((\subob^{X^T})_T)^T\times X^T)^X\ar[r]_\beta & (((\subob^{X^T})_T)^T)^X\times (X^T)^X;
}\]
Diagram (2) commutes:
\[\xymatrix{
((\subob^{X^T})_T)^T\ar[rr]^(.45){\left(\vartheta_{(\subob^{X^T})_T}\right)^T}\ar[drr]^(.55){\langle\sigma,\text{`$1$'}!\rangle^T}\ar@/_.7pc/[ddrr]_{\langle\sigma^T,(\text{`$1$'}!)^T\rangle} & & (((\subob^{X^T})_T\times X)^X)^T\ar[d]^{\beta^T} \\
& & (((\subob^{X^T})_T)^X\times X^X)^T\ar[d]^\beta \\
& & (((\subob^{X^T})_T)^X)^T\times (X^X)^T,
}\]
and, by \ref{L:swappingwithsigmannames} and the naturality of $\sigma$, Diagram (1) commutes:
\[\xymatrix{
(X_T)^T\ar[d]_{((\{-\})_T)^T}\ar[r]^{\varepsilon_X} & X\ar[r]^{\{-\}} & \subob^X\ar[dd]^{\subob^{X^p}} \\
((\subob^X)_T)^T\ar[d]_{((\subob^{X^p})_T)^T}\ar@/_.7pc/[urr]^\varepsilon & & \\
((\subob^{X^T})_T)^T\ar[d]^{\langle\sigma^T,(\text{`$1$'}!)^T\rangle}\ar@/^1.5pc/[ddrr]^(.6){\langle\varepsilon^X\sigma,\text{`$\sigma_X^T$'}!\rangle}\ar[rr]^(.53)\varepsilon\ar@/_6.5pc/[dd]_{\langle\sigma,\text{`$\sigma_X^T$'}!\rangle} & & \subob^{X^T}\ar[dd]^{\langle\sigma,\text{`$\sigma$'}!\rangle} \\
(((\subob^{X^T})_T)^X)^T\times (X^X)^T\ar[d]^{\alpha\times\alpha} & & \\
(((\subob^{X^T})_T)^T)^X\times (X^T)^X\ar[rr]_(.55){\varepsilon^X\times 1} & & (\subob^{X^T})^X\times(X^T)^X.
}\]

The following diagram commutes:
\[\xymatrix{
\subob^X\ar@/^.7pc/[ddrr]^1\ar[d]_{\subob^{X^p}} & & \\
\subob^{X^T}\ar[d]_{\left\langle\sigma_{\subob^{X^T}}^X,\text{`$\sigma_X^T$'}!\right\rangle}  & & \\
(\subob^{X^T})^X\times(X^T)^X\ar[r]_(.55){\beta^{-1}} & (\subob^{X^T}\times X^T)^X\ar[r]_(.7){\ev^X} & \subob^X.
}\]
Indeed, by using the internal language of the topos, each arrow in the diagram in the up-down-left-right path is described as follows:
\begin{align*}
&u\mapsto(k\mapsto u(k(p)))\\
&v\mapsto\langle x\mapsto v,x\mapsto(t\mapsto x)\rangle\\
&\langle x\mapsto w(x),x\mapsto z(x)\rangle\mapsto(x\mapsto\langle w(x),z(x)\rangle)\\
&f\mapsto(x\mapsto f_1(x)(f_2(x))).
\end{align*}
So by composing them, one gets
\[\xymatrix{
u\ar@{|->}[d]\\
k\mapsto u(k(p))\ar@{|->}[d]\\
\langle x\mapsto(k\mapsto u(k(p))),x\mapsto(t\mapsto x)\rangle\ar@{|->}[d]\\
x\mapsto\langle k\mapsto u(k(p)),t\mapsto x\rangle\ar@{|->}[d]\\
x\mapsto u(x)
}\]
since $t\mapsto x$ is substituted for $k$. That is, the diagram above commutes.

Finally, $j_{p,X}$ is the transpose of the $(\delta_X)_T\circ 1\times X_p$ under the adjunction $(-)\times X\dashv(-)^X$, or alternatively the transpose of the other composite in the diagram of \ref{L:commutativediagwithYp}. Let $g$ be the following subcomposite of that composite: from $\eta$ to $\ev^T$, and substitute $X$ for $Y$. That is, $g:(\subob^{X^T})_T\times X\rightarrow\subob_T$. So the other subcomposite is $(\subob^{X^p})_T\times 1\circ(\{-\})_T\times 1$. Hence, by naturality of $\vartheta$,
\begin{align*}
j_{p,X} &=g^X\circ((\subob^{X^p})_T\times 1\circ(\{-\})_T\times 1)^X\circ\vartheta_{X_T}\\
&= g^X\circ\vartheta_{(\subob^{X^T})_T}\circ\subob^{X^p}\circ(\{-\})_T.
\end{align*}
This finishes the proof.
\end{proof}

\section{General axiomatic cohesion} \label{S:precohesiveAtom}
The purpose of this section is to prove  \hyperlink{T:TeorC}{Theorem C}. This is split into two theorems. 
\begin{teorema}\label{T:ExpIdealsReflect}
Let $\E$ be a cartesian closed category, and $\mathcal{S}$ a reflective and coreflective subcategory of $\E$ with reflector $L$ and coreflector $R$ on $\E$. If $\mathcal{S}$ is an exponential ideal, then the inclusion funtor $\mathcal{S}\hookrightarrow\E$ reflects atomic objects.
\end{teorema}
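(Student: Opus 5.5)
The plan is to build the right adjoint in $\mathcal S$ directly: apply the amazing right adjoint in $\E$ and then the coreflector $R$. Write $i:\mathcal S\hookrightarrow\E$ for the inclusion, which is full and faithful. Suppose $A\in\mathcal S$ is such that $iA$ is atomic in $\E$, with $(-)^{iA}\dashv(-)_{iA}$. We must show that $(-)^A:\mathcal S\to\mathcal S$ has a right adjoint. The candidate is
\[
(-)_A:=R\circ(-)_{iA}\circ i:\mathcal S\to\mathcal S.
\]

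The first step is to check that the cartesian closed structure of $\mathcal S$ is the one inherited from $\E$, so that $i(X^A)\cong (iX)^{iA}$ naturally in $X\in\mathcal S$.
\begin{enumerate}
\item Since $\mathcal S$ is reflective, it is closed in $\E$ under all limits that exist. Hence the terminal object and binary products of $\mathcal S$ are computed in $\E$, and $i$ preserves them.
\item Since $\mathcal S$ is an exponential ideal, $(iY)^{Z}$ lies in $\mathcal S$ for every $Y\in\mathcal S$ and $Z\in\E$.
\item For $X,Y,B\in\mathcal S$, full faithfulness gives natural bijections
\[
\mathcal S(X\times B,Y)\cong\E(iX\times iB,iY)\cong\E(iX,(iY)^{iB}).
\]
So the object of $\mathcal S$ corresponding to $(iY)^{iB}$ is an exponential in $\mathcal S$.
\end{enumerate}
I expect this to be the only point needing care. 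Any choice of exponentials in $\mathcal S$ is canonically isomorphic to this one, so the compatibility isomorphism is natural.

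The second step is the chain of bijections, for $X,Y\in\mathcal S$:
\[
\mathcal S(X^A,Y)\cong\E(i(X^A),iY)\cong\E((iX)^{iA},iY)\cong\E(iX,(iY)_{iA})\cong\mathcal S(X,R((iY)_{iA})).
\]
The first bijection is full faithfulness of $i$. The second is the first step. The third is atomicity of $iA$. The fourth is the adjunction $i\dashv R$ coming from coreflectivity.

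Each bijection is natural in $X$ and $Y$. For the second, this uses the naturality of $i(X^A)\cong(iX)^{iA}$ from the first step. The others are adjunction bijections or hom-functors composed with functors. Hence $(-)^A\dashv R\circ(-)_{iA}\circ i$, and $A$ is atomic in $\mathcal S$.

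Notice that the reflector $L$ is used only to guarantee that finite products in $\mathcal S$ agree with those in $\E$. The coreflector $R$ is what actually produces the right adjoint.
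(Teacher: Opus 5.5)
Your argument is correct and is essentially the paper's proof: the paper gives exactly the chain of natural bijections $A^B\to C$, $A\to C_B$, $A\to R(C_B)$, so the right adjoint in $\mathcal S$ is $R\circ(-)_B$ restricted to $\mathcal S$. The only difference is that you spell out why finite products and exponentials in $\mathcal S$ agree with those in $\E$ (via reflectivity and the exponential-ideal hypothesis), which the paper leaves implicit.
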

\begin{proof}
Let $A,B,C\in\mathcal{S}$ and suppose $B$ is atomic in $\E$. Consider the following bijections natural in $A$ and $C$:
\[\begin{tabular}{c}
$A^B\rightarrow C$ \\
\hline
$A\rightarrow C_{B}$ \\
\hline
$A\rightarrow R(C_{B})$
\end{tabular}\]
\end{proof}

\begin{teorema}\label{T:fshriekpreservesatoms}

Let $F:\E\rightarrow\mathcal{S}$ be a precohesive geometric morphism. Then $F_!$ preserves atomic objects.
\end{teorema}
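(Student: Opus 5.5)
The plan is to write the exponential functor $(-)^{F_!T}$ on $\mathcal S$ as a composite of functors that each have a right adjoint. The right adjoint of $(-)^{F_!T}$ is then the composite of those right adjoints in reverse order. Fix an atomic $T\in\E$ and recall the string $F_!\dashv F^*\dashv F_*\dashv F^!$. Here $F^*$ is fully faithful, so the unit $1\Rightarrow F_!F^*$ is an isomorphism, and $F_!$ preserves finite products.

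\emph{Key step.} For $A\in\mathcal S$ we want a natural isomorphism
\[
A^{F_!T}\;\cong\;F_*\big((F^*A)^T\big).
\]
We check it by Yoneda, using the following chain of bijections natural in $C\in\mathcal S$:
\[\begin{tabular}{c}
$C\rightarrow A^{F_!T}$ \\
\hline
$C\times F_!T\rightarrow A$ \\
\hline
$F_!(F^*C\times T)\rightarrow A$ \\
\hline
$F^*C\times T\rightarrow F^*A$ \\
\hline
$F^*C\rightarrow (F^*A)^T$ \\
\hline
$C\rightarrow F_*\big((F^*A)^T\big)$.
\end{tabular}\]
The second bijection uses
\[
F_!(F^*C\times T)\cong F_!F^*C\times F_!T\cong C\times F_!T,
\]
which holds because $F_!$ preserves binary products and $F_!F^*\cong 1$. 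The remaining bijections are $F_!\dashv F^*$, the exponential adjunction in $\E$, and $F^*\dashv F_*$. Naturality in $A$ is also clear, since every step is functorial in $A$. This gives $(-)^{F_!T}\cong F_*\circ(-)^T\circ F^*$ as functors $\mathcal S\to\mathcal S$.

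\emph{Conclusion.} Each factor has a right adjoint: $F^*\dashv F_*$, $(-)^T\dashv(-)_T$ (atomicity of $T$), and $F_*\dashv F^!$. A composite of left adjoints is a left adjoint, so
\[
(-)^{F_!T}\;\dashv\;F_*\big((F^!(-))_T\big).
\]
Hence $F_!T$ is atomic, and we obtain the explicit formula $A_{F_!T}\cong F_*\big((F^!A)_T\big)$.

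\emph{Main obstacle.} The only nonformal point is the isomorphism $C\times F_!T\cong F_!(F^*C\times T)$, natural in $C$. This is exactly where precohesion is used: product preservation by $F_!$ is essential, and full faithfulness of $F^*$ supplies $F_!F^*\cong 1$. Care is needed to check that this isomorphism is natural in $C$. It is the composite of the canonical comparison map for $F_!$ on products, which is natural, with $F_!F^*C\cong C$, which is the inverse of the unit of $F_!\dashv F^*$ and hence natural. Once that is in place, the rest is formal composition of adjunctions.
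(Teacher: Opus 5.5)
Your argument is correct and is essentially the paper's. The paper also first establishes $A^{F_!T}\cong F_*\big((F^*A)^T\big)$ naturally in $A$, obtaining it from an exponential-ideal lemma that cites Johnstone A4.3.1, whereas you prove it directly by the Yoneda chain. It then composes $F^*\dashv F_*$, $(-)^T\dashv(-)_T$ and $F_*\dashv F^!$ to get $A_{F_!T}\cong F_*\big((F^!A)_T\big)$, exactly as you do. One small slip: $F_!F^*\cong 1$ is the \emph{counit} of $F_!\dashv F^*$, invertible because the right adjoint $F^*$ is fully faithful, not its unit.
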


\begin{lema}
Let $\E$ be a cartesian closed category, and $\mathcal{S}$ a reflective and coreflective subcategory of $\E$ with reflector $L$ and coreflector $R$ on $\E$. If $\mathcal{S}$ is an exponential ideal, then the composite
\[\xymatrix{
A^{LX}\ar[r]^(.47){\eta_{A^{LX}}} & R(A^{LX})\ar[r]^(.53){R(A^{\alpha_X})} & R(A^X)
}\]
is an isomorphism natural in $A\in\mathcal{S}$ for every $X\in\E$.
\end{lema}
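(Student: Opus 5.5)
The plan is to show that each factor of the composite is an isomorphism: the first because $A^{LX}$ already lies in $\mathcal S$, the second by a Yoneda argument inside $\mathcal S$. For the first factor, $\mathcal S$ is an exponential ideal and $A\in\mathcal S$, so $A^{LX}\in\mathcal S$. For an object of the coreflective subcategory, the comparison between the object and its coreflection is an isomorphism. Hence $\eta_{A^{LX}}:A^{LX}\to R(A^{LX})$ is an isomorphism, and everything reduces to showing that $R(A^{\alpha_X})$ is one, where $\alpha_X:X\to LX$ is the unit of the reflection.

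For the second factor I will fix an arbitrary $B\in\mathcal S$ and compute $\mathcal S(B,R(A^{\alpha_X}))$. Since $R$ is right adjoint to the inclusion, this map is identified, naturally in $B$, with postcomposition
\[
A^{\alpha_X}\circ(-):\E(B,A^{LX})\to\E(B,A^X).
\]
Transposing under the cartesian closed structure, this is identified with precomposition
\[
(-)\circ\alpha_X:\E(LX,A^B)\to\E(X,A^B).
\]
This uses only that exponential transposition is natural in the exponent, i.e.\ that it commutes with $A^{\alpha_X}$; writing out the two transposes is the one routine check required. Now $A^B\in\mathcal S$, again because $\mathcal S$ is an exponential ideal. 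By the universal property of the reflection $L\dashv\mathcal S\hookrightarrow\E$, precomposition with $\alpha_X$ is a bijection onto maps into objects of $\mathcal S$. Hence $\mathcal S(B,R(A^{\alpha_X}))$ is a bijection for every $B\in\mathcal S$. By Yoneda in $\mathcal S$, $R(A^{\alpha_X})$ is an isomorphism, and so is the composite.

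Naturality in $A\in\mathcal S$ is immediate, because each constituent is natural in $A$: the coreflection unit $\eta$ is a natural transformation, and $R(A^{\alpha_X})$ is $R$ applied to a map natural in $A$. The only step needing care is matching the chain of natural bijections with the specific map $A^{\alpha_X}\circ(-)$, rather than merely producing some abstract isomorphism $R(A^{LX})\cong R(A^X)$. I expect this step to be the main (though mild) obstacle, and it is settled by the naturality of exponential transposition in the exponent variable.
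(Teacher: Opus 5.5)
Your proof is correct and follows the paper's route. The first factor is invertible because $A^{LX}\in\mathcal S$ and the coreflection unit is invertible on objects of $\mathcal S$. The second factor is handled through $A^{\alpha_X}$: the paper cites Johnstone's Elephant (A4.3.1) for $A^{\alpha_X}:A^{LX}\cong A^X$, whereas you reprove what is needed by a Yoneda argument in $\mathcal S$. Your argument works verbatim for arbitrary test objects $B\in\E$, since $A^B\in\mathcal S$ for every $B$. It therefore already shows that $A^{\alpha_X}$ itself is an isomorphism, which is exactly the cited fact, so the restriction to $B\in\mathcal S$ and the passage through $R$ are not needed.
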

\begin{proof}
Let $\alpha:1_\E\Rightarrow L$ be the unit of the reflection and $\eta:1_\mathcal{S}\Rightarrow R$ that of the coreflection. Let $A\in\mathcal{S}$ and $X\in\E$. By Proposition A4.3.1 in \citet{MR1953060},
\[
A^{\alpha_X}:A^{LX}\cong A^X.
\]
On the other hand, since the inclusion functor $\mathcal{S}\hookrightarrow\E$ is fully faithful, $\eta$ is an isomorphism; hence
\[
\eta_{A^{LX}}:A^{LX}\cong R(A^{LX}).
\]
Therefore $R(A^X)\cong R(A^{LX})\cong A^{LX}$. The naturality is trivial.
\end{proof}

\
\begin{proof}[Proof of \ref{T:fshriekpreservesatoms}]
Without loss of generality, one may think of $F^\ast$ as an inclusion functor $\mathcal{S}\hookrightarrow\E$. Let $T\in\E$ be a atomic object. Let $A,B\in\mathcal{S}$. So, by the previous lemma, there is an isomorphism $F_\ast(A^T)\cong A^{f_! T}$ natural in $A$. Hence the following bijections are natural in $A$ and $B$:
\[\begin{tabular}{c}
$A^{F_! T}\rightarrow B$ \\
\hline
$F_\ast(A^T)\rightarrow B$ \\
\hline
$A^T\rightarrow F^! B$ \\
\hline
$A\rightarrow(F^! B)_{T}$ \\
\hline
$A\rightarrow f_\ast((F^! B)_{T}).$
\end{tabular}\]
Therefore $F_! T$ is atomic in $\mathcal{S}$.
\end{proof}

\section{Atomic objects in McLarty toposes}\label{S:McLartyAtoms}

The main purpose of this section is to prove the first part of \hyperlink{T:TeorD}{Theorem D}.  Recall that a {\em McLarty topos} is a 2-valued topos, where supports split and which is precohesive over a boolean base (see \citet{MR0877866, MR0925615}).   This means that there is a string of adjuntions

\begin{equation}\label{E:McLartybis}
\vcenter{\xymatrix{
\ar@<-3.5ex>@{}[rr]|{\dashv}\ar@<-3.5ex>@{}[rr]|(.26){\dashv}\ar@<-3.5ex>@{}[rr]|(.73){\dashv} & \E\ar@/^.7pc/[d]^{\Gamma}\ar@/_2.5pc/[d]_{\Pi} & \\
& \dec(\E)\ar@/^.7pc/[u]^{\mathcal{I}}\ar@/_2.5pc/[u]_{\Lambda} &
}}
\end{equation}
with $\Pi$ fully faithful, the counit of $\mathcal I\dashv \Gamma$ monic, and such that $\Pi$ preserves finite products.

A  2-valued topos where supports split satisfies the following Nullstellensatz: ``Every object is either initial or has global elements''.  This is equivalent, by Theorem 4.32 in \citet{MR972257}, to the local set theory $\teo(\E)$ associated to $\E$ being strongly witnessed.

\citet[Corollary 4.2]{MM2024bis} proves that for a general precohesion context, as soon as the base $\mathcal S$ is Boolean, it is equivalent to $\dec(\E)$. In order for $\E$ to satisfy the Nullstellensatz above, it is enough that base topos $\mathcal S$ satisfies it (See the Motivation section of \cite{MR4928709}).  Therefore, any Grothendieck topos whose canonical geometric morphism is precohesive is McLarty. 

The Nullstellensatz has an immediate consequence in terms of tininess. Recall that \citet{MR2177301} defines an object $T$ to be tiny---or {\em small-projective}---if $\E(X,-)$ preserves colimits. For pre-sheaf toposes, being atomic is equivalent to being small-projective (see Section 1.4 in \cite{aM1999}). The following result shows this is so for terminal objects in more general toposes and will be used to prove \hyperlink{T:TeorE}{Theorem E} in the next section. 

\begin{prop}\label{P:1issmallprojectiveinNS}
If $\E$ is a topos satisfying the Nullstellensatz, then $\E(1,-)$ preserves colimits.
\end{prop}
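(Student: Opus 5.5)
The plan is to show that $\E(1,-)$ preserves arbitrary coproducts and coequalizers. Every colimit is a coequalizer of a pair of maps between coproducts, so these two cases suffice. Everything will follow from two facts about points in $\E$.

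The first fact is that in a $2$-valued topos every subobject of $1$ is $0$ or $1$. The second is a lifting property: if $e:Y\to Q$ is epic, then every point $x:1\to Q$ lifts along $e$. To see this, pull $e$ back along $x$ to get $P\to 1$. This map is epic, since epis are stable under pullback in a topos. If $P$ were initial, then $1$ would be initial, and $\E$ would be degenerate, which $2$-valuedness excludes. So by the Nullstellensatz $P$ has a global element, and composing it into $Y$ gives the required lift. I would record this lifting property as a preliminary observation.

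For coproducts, take a point $x:1\to\coprod_i X_i$. Coproducts in a topos are universal and disjoint, so pulling back the injections along $x$ expresses $1$ as a coproduct $\coprod_i U_i$ of pairwise disjoint subobjects $U_i\rightarrowtail 1$. Each $U_i$ is $0$ or $1$. Disjointness allows at most one of them to be $1$, and they cannot all be $0$ since their coproduct is $1$. Hence $x$ factors through exactly one injection, and it does so uniquely because the injections are monic. This shows that the canonical map $\coprod_i\E(1,X_i)\to\E(1,\coprod_i X_i)$ is a bijection.

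For coequalizers, let $q:Y\to Q$ be the coequalizer of $f,g:X\rightrightarrows Y$. Surjectivity of $\E(1,Y)\to\E(1,Q)$ is immediate from the lifting property, since $q$ is epic. The main obstacle is injectivity modulo the right relation. We must show that if $qy=qy'$ for points $y,y'$ of $Y$, then $y$ and $y'$ are connected by a finite zig-zag of points of $X$ through $f$ and $g$.

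To do this, I would use that in a topos the kernel pair $R\rightarrowtail Y\times Y$ of $q$ is the equivalence relation generated by the image $R_0$ of $\langle f,g\rangle$. It can be described as the union $\bigcup_n R_n$, where $R_1=\Delta\cup R_0\cup R_0^{\mathrm{op}}$ and $R_{n+1}=R_n\circ R_1$ (relational composition). Since $qy=qy'$, the pair $\langle y,y'\rangle$ is a point of $R$. The union $R$ is the image of $\coprod_n R_n$, so the lifting property and the coproduct case together put $\langle y,y'\rangle$ in some $R_n$. Each $R_{n+1}$ is the image of a pullback $R_n\times_Y R_1$ and $R_1$ is the image of a coproduct, so repeating the same two arguments unwinds this into a chain of points of $Y$. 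Consecutive points in the chain are equal or are the images under $f$ and $g$ (in one order or the other) of a single point of $X$, which is exactly the relation defining the coequalizer in $\set$. Finally, the bijections constructed in both cases are the canonical comparison maps, so $\E(1,-)$ preserves colimits.
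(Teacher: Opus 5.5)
Your argument is correct in any non-degenerate topos with countable coproducts, in particular in every Grothendieck topos. That is the only setting where the paper uses this proposition, in the proof of Theorem~\ref{T:ContractGrothen}.

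Compared with the paper's route:
\begin{itemize}
\item The paper gets preservation of epimorphisms from Bell's results on witnessed local set theories. You instead prove projectivity of $1$ directly, by pulling an epi back along a point.
\item For coproducts, both arguments rest on extensivity and on $1$ having only the complemented subobjects $0$ and $1$.
\item The paper then stops and never explains how arbitrary coequalizers follow from these two facts. Your zig-zag argument through the generated equivalence relation is exactly that missing step, so your write-up is the more complete one.
\end{itemize}

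The one step that fails in the stated generality is $R=\bigcup_n R_n$, taken as the image of $\coprod_n R_n$. An elementary topos need not have countable coproducts. The equivalence relation generated by $R_0$ is then only an internal construction, via power objects or a natural numbers object. A point of it gives a chain of some internal length $n\colon 1\to N$, and that length may be nonstandard.

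This cannot be repaired, because the proposition is false for elementary toposes. Let $\E=\set^{\N}/\mathcal U$ be the filter-power by a nonprincipal ultrafilter $\mathcal U$.
\begin{itemize}
\item $\E$ is Boolean and two-valued, and every non-initial object has a point. So it satisfies the Nullstellensatz; it is even a Boolean McLarty topos.
\item Let $Y$ be the constant family $\N$ and $s\colon Y\to Y$ the successor. A map out of $Y$ coequalizing $1_Y$ and $s$ is, on some member of $\mathcal U$, a family of constant maps. Hence the coequalizer in $\E$ is $Y\to 1$.
\item On the other hand $\E(1,Y)={}^*\N$. The coequalizer in $\set$ of the identity and $a\mapsto a+1$ on ${}^*\N$ puts $0$ and the class of the diagonal sequence $(i)_{i\in\N}$ in different classes, so it is not a singleton.
\end{itemize}
The same example defeats the paper's proof. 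In this topos $1$ is projective and indecomposable, which are the only two facts that proof uses.

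You should therefore state explicitly two hypotheses:
\begin{itemize}
\item $\E$ has countable coproducts, for instance because it is a Grothendieck topos.
\item $\E$ is non-degenerate. The Nullstellensatz alone does not give this, and the conclusion fails in the degenerate topos, since there $\E(1,0)\neq\emptyset$.
\end{itemize}
With these hypotheses your proof goes through as written.
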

\begin{proof}The Nullstellensatz corresponds to the local set theory $\teo(\E)$ associated to $\E$ is strongly witnessed, and by Theorem 4.31 in \citet{MR972257}, it is also witnessed. Whence, by Theorem 4.32 in \citet{MR972257}, $\E(1,-)$ preserves quotients.

Now, it is well known that in extensive categories having exactly two complemented subobjects is equivalent to requiring its representable functor preserve coproducts. That is the case for the terminal object in a topos that satisfies the Nullstellensatz (See \cite{MR4928709}).
\end{proof}

Now, in McLarty toposes, it is easily seen that for atomic $T\in\E$,  
\[
\Gamma(2_{T})\cong 2.\]
Indeed, consider the bijection on arrows
\[\begin{tabular}{c}
$X^T\rightarrow Y$ \\
\hline
$X\rightarrow Y_T$
\end{tabular}\]
Hence if $X=1$ and $Y=2$, then $2\rightarrowtail\dso(2_T)$. Whence 
\begin{equation}\label{E:Gamma2T}\Gamma(2_T)\cong2.
\end{equation}
This is generalized by \hyperlink{T:TeorD}{Theorem D} and is used in the proof of the following result.

\begin{teorema}\label{T:AtomicDecidableisTerminal}
Every atomic decidable object of a McLarty topos is terminal.
\end{teorema}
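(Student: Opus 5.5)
Let $T$ be an atomic decidable object of a McLarty topos $\E$. The plan is to show that $T$ has exactly one point and then pass from points to $T$ itself via the Nullstellensatz. In a McLarty topos, $\dec(\E)$ is Boolean and two-valued with supports splitting, so it behaves like a model of ETCS. Since $T$ is decidable, it lies in the image of $\mathcal I$, and by the counit of $\mathcal I\dashv\Gamma$ we have $T\cong\mathcal I\Gamma(T)$. Thus $T$ is a ``set'' of points, i.e. a coproduct of copies of $1$ indexed (internally to the Boolean base) by $\Gamma(T)$.

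\textbf{Step 1: $T$ is not initial.} Initial objects are not atomic in a nontrivial category, as noted in Section \ref{S:catAtoms}. By the Nullstellensatz, $T$ therefore has a point $p:1\to T$.

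\textbf{Step 2: $T$ has at most one point.} The idea is to use \eqref{E:Gamma2T}, namely $\Gamma(2_T)\cong 2$, and compare it with what decidability forces. A point $1\to 2_T$ corresponds by adjunction to a map $1^T\cong 1\to 2$, so there are exactly two. I will use the points more cleverly by exploiting exponentials of decidable objects. For decidable $T=\mathcal I(S)$ and decidable $X$, the object $X^T$ is decidable, being a product of $S$-many copies of $X$ in the Boolean base. Take $X=2$. Maps $2^T\to 2$ correspond to maps $2\to 2_T$, and these amount to pairs of points of $2_T$. Hence the number of maps $2^T\to 2$ equals $|\Gamma(2_T)|^2=4$.

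On the other hand, $2^T\cong \mathcal I(2^S)$, and $\Gamma(2^T)=2^S$ carries at least $2^{|2^S|}$ maps to $2$. The equation $2^{2^{|S|}}=4$ forces $|S|=1$, so $T\cong 1$. To avoid cardinal arithmetic in a general Boolean base, I would argue directly: if $S$ had two distinct points, then $2^S$ would have at least $4$ distinct points and hence more than $4$ maps to $2$, a contradiction. Since $T$ is decidable, two distinct points are disjoint, so they give a complemented decomposition, and the count is genuine.

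\textbf{Step 3: conclude.} Having exactly one point, the decidable $T\cong\mathcal I\Gamma(T)\cong\mathcal I(1)\cong1$.

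The main obstacle is Step 2: making the counting rigorous in an arbitrary Boolean two-valued base rather than in $\set$. The argument needs that maps $\mathcal I(A)\to 2$ correspond to complemented subobjects of $A$, and that distinct points give distinct such maps, which uses the splitting of supports. An alternative for this step is to apply Theorem~\ref{T:ExpIdealsReflect} together with the fact that $\dec(\E)$ is closed under exponentials between decidable objects. That would make $S$ atomic in the Boolean base, where a Boolean version of the set-theoretic argument (only $1$ is atomic) must then be supplied.
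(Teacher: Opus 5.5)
Your proposal is correct and is essentially the paper's argument. Both proofs use the adjunction to get exactly four maps $2^T\to 2$, and both observe that two distinct points of $T$ would split off $1+1$, making $2^{1+1}\cong 1+1+1+1$ a retract of $2^T$ with sixteen maps to $2$. The only variation is that you count directly in $\E$, via $\E(2^T,2)\cong\E(1,2_T)^2$ and $\E(1,2_T)\cong\E(1^T,2)$, instead of first descending atomicity to $\dec(\E)$ with Theorem~\ref{T:ExpIdealsReflect}. One correction: the decidability of $2^T$ should be justified by $\dec(\E)$ being an exponential ideal (its reflector $\Pi$ preserves finite products), not by a ``product of $S$-many copies''. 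In fact your retract argument in $\E$ does not need that decidability at all.
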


\begin{cor}\label{C:tinyimpliesconnected}
Every atomic object of a McLarty topos is connected.
\end{cor}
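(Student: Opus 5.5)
The plan is to deduce the corollary from Theorem \ref{T:AtomicDecidableisTerminal} together with the fact, Theorem \ref{T:fshriekpreservesatoms}, that $F_!$ preserves atomic objects. In a McLarty topos the leftmost adjoint is $\Pi:\E\to\dec(\E)$, and $\dec(\E)$ is identified with the base topos, which is Boolean. So, for an atomic $T$ in $\E$, Theorem \ref{T:fshriekpreservesatoms} shows that $\Pi(T)$ is atomic in $\dec(\E)$. We must then show that $\Pi(T)\cong 1$, which is exactly the definition of $T$ being connected.

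To apply Theorem \ref{T:AtomicDecidableisTerminal}, I would view $\mathcal{I}\Pi(T)$ as an object of $\E$. Since $\dec(\E)$ is reflective and coreflective, with $\Pi\dashv\mathcal I\dashv\Gamma$, and is an exponential ideal (the relevant lemma uses this, and for decidable objects in a precohesive situation it holds because $\mathcal I$ preserves exponentials: $\Pi$ preserves finite products), we need $\mathcal I\Pi(T)$ to be atomic in $\E$. This step is the main obstacle, since Theorem \ref{T:ExpIdealsReflect} gives only reflection, not preservation. I would therefore avoid it and argue directly inside $\dec(\E)$.

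The direct argument runs as follows. $\dec(\E)$ is itself a Boolean topos, and it inherits from $\E$ the property of being 2-valued with split supports: its subobjects of $1$ are those of $\E$, because $\mathcal I$ is fully faithful and preserves monos and $1$. Being Boolean, it is precohesive over itself via the identity string of adjoints, so it is a (Boolean) McLarty topos. Theorem \ref{T:AtomicDecidableisTerminal} applied to $\dec(\E)$ then says that every atomic object there, all of which are decidable, is terminal. Hence $\Pi(T)\cong 1$.

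Finally, I would check that $\Pi(T)$ is not initial, as a sanity check, though terminality already settles this. By the Nullstellensatz, $T$ has a point, since initial objects are not atomic. Therefore $\Gamma(T)\neq 0$, and so $\Pi(T)\neq 0$. This fits the conclusion, and the corollary follows.
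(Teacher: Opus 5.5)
Your proposal is correct and follows the paper's argument: Theorem \ref{T:fshriekpreservesatoms} makes $\Pi(T)$ atomic in $\dec(\E)$, and Theorem \ref{T:AtomicDecidableisTerminal} then gives $\Pi(T)\cong 1$. Regarding $\dec(\E)$ itself as a Boolean McLarty topos (precohesive over itself via identities, with the Nullstellensatz inherited because $\mathcal I$ is fully faithful and preserves $0$ and $1$) makes explicit a step the paper's one-line proof leaves implicit, namely that Theorem \ref{T:AtomicDecidableisTerminal} only needs atomicity in $\dec(\E)$, not in $\E$; this matches the way Theorem D is phrased.
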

\begin{proof} It follows from  \ref{T:fshriekpreservesatoms} and \ref{T:AtomicDecidableisTerminal}.
\end{proof}
\begin{proof}[Proof of \ref{T:AtomicDecidableisTerminal}]
Let $\E$ be McLarty and $A\in\dec(\E)$ be atomic in $\E$. So, by \ref{T:ExpIdealsReflect}, $A$ is atomic in $\dec(\E)$. Accordingly one has the following bijection on arrows in $\dec(\E)$:
\[\begin{tabular}{c}
$2^A\rightarrow 2$ \\
\hline
$2\rightarrow 2_A$\\
\hline
$2\rightarrow\dso(2_A)$\\
\hline
$2\rightarrow 2$
\end{tabular}\]
since $\dso(2_A)\cong 2$ by \eqref{E:Gamma2T}. Hence $2^A$ has exactly four subobjects in $\dec(\E)$ since $2=1+1$; that is, every arrow $2\rightarrow 2$ is determined by the points of 2.

Now, as $\dec(\E)$ satisfies Nullstellensatz and $A$ is atomic ($A\neq 0$), there is a point $a$ such that $a\in A$. Now, since 
\[
A=\{a\}\quad\text{and}\quad\neg(A=\{a\}).
\]
are closed formulas, they correspond to points in $\subob$, which is 2-valued. Therefore, exactly one of them is true.  If $A=\{a\}$, the proof finishes there. So suppose $\neg(A=\{a\})$. Hence there is a point $b$ such that
\[
b\in A\setminus\{a\}.
\]
That is,
\[
2\cong\{a,b\}\subseteq A.
\]
But then $2^2\subseteq 2^A$. However, since $2^2$ has four different points, then $2^2$ has at least 16 subobjects in $\dec(\E)$, which is a contradiction. Therefore it is not possible to conclude that $\neg(A=\{a\})$. That is, $A=1$.
\end{proof}
\begin{teorema}\label{T:YandYsubTsamepoints}
Let $\E$ be a McLarty topos. If $T$ is an atomic object of $\E$ then $\Gamma X\cong\Gamma(X_{T})$ for every $X\in\E$.
\end{teorema}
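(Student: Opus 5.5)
The plan is to prove the isomorphism by the Yoneda lemma inside $\dec(\E)$, using the adjunction $\mathcal I\dashv\Gamma$ together with the adjunction $(-)^T\dashv(-)_T$. The key intermediate fact is that exponentiating a decidable object by $T$ does nothing. This rests on Corollary \ref{C:tinyimpliesconnected}: $T$ is connected, i.e.\ $\Pi T\cong 1$.

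First I show that for every decidable $A$ the arrow $A^{!}:A\to A^T$ (the transpose of the projection, $\sigma^T_A$) is an isomorphism in $\E$. For an arbitrary $C\in\E$ there is a chain of bijections natural in $C$:
\[
\E(C,A^T)\cong\E(C\times T,\mathcal I A)\cong\dec(\E)(\Pi(C\times T),A)\cong\dec(\E)(\Pi C\times\Pi T,A)\cong\dec(\E)(\Pi C,A)\cong\E(C,\mathcal I A).
\]
Here I use $\Pi\dashv\mathcal I$, the fact that $\Pi$ preserves finite products, and $\Pi T\cong 1$. By Yoneda, $A^T\cong A$. To identify this isomorphism with $A^{!}$, I will check that precomposition with $C\times T\to C$ corresponds, under these bijections, to precomposition with $\Pi(\pi_C)$. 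Since $\Pi T=1$, that arrow is an isomorphism, so the composite bijection is indeed induced by $A^{!}$.

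Next, for any decidable $A$ and any $X\in\E$, I form the chain of bijections natural in $A$:
\[
\dec(\E)(A,\Gamma(X_T))\cong\E(\mathcal I A,X_T)\cong\E(A^T,X)\cong\E(A,X)\cong\dec(\E)(A,\Gamma X).
\]
The third step is precomposition with the isomorphism $A^{!}$ from the previous step. Naturality in $A$ follows from the naturality of $(-)^{!}$ and of the two adjunctions. Yoneda in $\dec(\E)$ then gives $\Gamma(X_T)\cong\Gamma X$, naturally in $X$. The same argument shows that this isomorphism is $\Gamma(X_{!})$, where $X_{!}:X_T\to X_1\cong X$ is the map of \eqref{E:Subf} induced by $!:T\to 1$. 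This is because $X_{!}$ is precisely the transpose of $\varepsilon_{T,X}$ precomposed with $X^{!}$, by \eqref{E:xtothefraction1overf}. As a sanity check, when $T$ has a point $p$, $\Gamma(X_p)$ is then a section of this isomorphism, which agrees with the retraction $X_{!}X_p=1$ used in Lemma \ref{L:superTnsubTarefaithful}.

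The main obstacle is the first step: being careful that $A^T\cong A$ holds in all of $\E$ and not merely on decidable test objects. The chain must run over arbitrary $C\in\E$, which is exactly where $\Pi\dashv\mathcal I$ with $\Pi$ product-preserving is needed. Everything else is formal manipulation of adjunctions. Note that the argument does not use the Nullstellensatz or Booleanness beyond what Corollary \ref{C:tinyimpliesconnected} already required.
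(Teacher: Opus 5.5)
Your proposal is correct and follows the paper's route. The paper also first shows that $A^T\cong A$ for decidable $A$ (Lemma \ref{L:Nomotionispossible}, proved by the same chain of bijections through $\Pi\dashv\mathcal I$, product preservation and $\Pi T\cong 1$), and then runs exactly your Yoneda chain $\dec(\E)(A,\Gamma Y)\cong\E(\mathcal I A,Y)\cong\E(A^T,Y)\cong\E(\mathcal I A,Y_T)\cong\dec(\E)(A,\Gamma(Y_T))$.

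The only difference is cosmetic: you use $A^{!}=\sigma^T_A$ instead of $\ev^0_A$, which avoids needing a point of $T$ and lets you identify the isomorphism as $\Gamma(X_{!})$. Note the small slip there: $X_{!}\cong\varepsilon_{T,X}\circ(X_T)^{!}$, not $\varepsilon_{T,X}\circ X^{!}$.
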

\begin{lema}\label{L:Nomotionispossible}
Let $\E$ be a McLarty topos. If $T$ is a connected object with a point $0:1\to T$ and $A$ is decidable, then
\[
\ev^0_A:A^T\rightarrow A
\]
is an isomorphism.
\end{lema}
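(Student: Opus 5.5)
The plan is to use the precohesive adjunction $\Pi\dashv\mathcal I$: $A$ is decidable, so $A=\mathcal I A$, and $T$ is connected, so $\Pi T\cong 1$. Since $\ev^0_A\circ A^{!}=1_A$ already holds because $0$ is a point, $\ev^0_A$ is a split epi. It therefore suffices to show that $A^{!}:A\to A^T$ is an isomorphism, or equivalently that it is epic; this is the statement that every map $T\to A$ is constant.

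I would argue at the level of generalized elements. A map $X\to A^T$ corresponds to a map $X\times T\to A$. Here $A$ lies in the image of $\mathcal I$, so by $\Pi\dashv\mathcal I$ this corresponds to a map $\Pi(X\times T)\to \Pi A\cong A$. Since $\Pi$ preserves finite products, $\Pi(X\times T)\cong\Pi X\times\Pi T\cong \Pi X$. So maps $X\times T\to A$ correspond naturally to maps $\Pi X\to A$, which in turn correspond to maps $X\to A$, using $\Pi\dashv\mathcal I$ once more.

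The key check is that this composite bijection $\E(X,A)\cong\E(X,A^T)$ is exactly postcomposition with $A^{!}$. In other words, the bijection $\E(X,A)\to\E(X\times T,A)$ should be precomposition with the projection $\pi_X$. This follows from naturality of the unit of $\Pi\dashv\mathcal I$ applied to the projection $X\times T\to X$, together with the fact that $\Pi(\pi_X)$ is an isomorphism because $\Pi T=1$. By Yoneda, $A^{!}$ is then an isomorphism, and hence so is $\ev^0_A$, as its inverse.

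The main obstacle is the naturality bookkeeping: I need to confirm that $\Pi(\pi_X)\colon\Pi(X\times T)\to\Pi X$ really is the product-comparison isomorphism composed with the projection. This holds because $\Pi$ preserves finite products and $\Pi T$ is terminal. The remaining McLarty hypotheses (two-valuedness, split supports) appear not to be needed.
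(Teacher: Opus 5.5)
Your proposal is correct and follows the paper's proof: it uses the same chain of natural bijections $\E(X,A^T)\cong\E(X\times T,A)\cong\E(\Pi(X\times T),A)\cong\E(\Pi X,A)\cong\E(X,A)$, built from $\Pi\dashv\mathcal I$, preservation of products by $\Pi$, and $\Pi T\cong 1$, then applies Yoneda and identifies the correspondence as $\hat f=g\circ\pi_X$. Your last step is slightly cleaner than the paper's: you read off that the Yoneda isomorphism is postcomposition with $A^{!}$ and use the splitting $\ev^0_A\circ A^{!}=1_A$, whereas the paper separately verifies $\ev^T_A=\ev^0_A\circ\pi_0$ via the computation $\Pi_0(\langle\pi_0,0!\rangle)=1$.
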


\begin{proof}
It is clear that there are have natural isomorphisms
\[\xymatrix{
\E\rrtwocell^1_{-\times 1}{\varphi} & & \E
}\]
and
\[\xymatrix{
& \E\ar[dr]^{\Pi_0} & \\
\E\times\E\ar[ur]^\times\ar[dr]_{\Pi_0\times\Pi_0}\rrtwocell<\omit>{\psi} & & \E. \\
& \E\times\E\ar[ur]_\times &
}\]
Therefore the following horizontal composites are natural isomorphisms:
\begin{equation}\label{E:natiso1}
\begin{gathered}
\xymatrix{
\E^{op}\ar[r]^{\Pi_0} & \E^{op}\rrtwocell_1^{-\times 1}{\varphi} & & \E^{op}\ar[r]^{\E(-,A)} & \set
}
\end{gathered}
\end{equation}
and
\begin{equation}\label{E:natiso2}
\begin{gathered}
\xymatrix{
& \E^{op}\times\E^{op}\ar[dr]^\times & \\
\E^{op}\times\E^{op}\ar[ur]^{\Pi_0\times\Pi_0}\ar[dr]_\times\rrtwocell<\omit>{\psi} & & \E^{op}\ar[r]^{\E(-,A)} & \set. \\
& \E^{op}\ar[ur]_{\Pi_0} &
}\end{gathered}
\end{equation}

Now, suppose $A\in\dec(\E)$ and $X$ is any object of $\E$. The following are natural bijections:
\begin{align*}
\E(X,A^T) &\cong\E(X\times T,A) & \\
\E(X\times T,A) &\cong\dec(\E)(\Pi (X\times T),A). &\\
\dec(\E)(\Pi (X\times T),A)&\cong\E(\Pi_0(X\times T),A) & \\
\E(\Pi_0(X\times T),A) &\cong\E(\Pi_0 X\times\Pi_0 T,A) &\text{by }\eqref{E:natiso2}\\
\E(\Pi_0 X\times\Pi_0 T,A) &\cong\E(\Pi_0 X,A) &\text{by }\eqref{E:natiso1}\\
\E(\Pi_0 X,A) &\cong\dec(\E)(\Pi X,A) & \\
\dec(\E)(\Pi X,A) &\cong\E(X,A). &
\end{align*}
Therefore there is a natural isomorphism $\E(-,A^T)\cong\E(-,A)$. So, by Yoneda, $A^T\cong A$. 

More explicitly, consider arrows $f:X\rightarrow A^T$ and $g:X\rightarrow A$ in $\E$ related by this chain of isomorphisms. Successive corresponding arrows in the chain are represented by radial arrows proceeding counter-clockwise from $\hat f$ in the following diagram:
\begin{equation*}
\begin{xy}
(-14.7,20.225)*+{X\times T}="v3";
(-23.725,-7.725)*+{\Pi_0(X\times T)}="v1";%
(0,-25)*+{\Pi_0(X)\times\Pi_0(T)}="v0";
(23.725,-7.725)*+{\Pi_0(X)}="v2";%
(14.7,20.225)*+{X}="v4";
(0,0)*+{A}="v7";%
{\ar "v3"; "v1"}?*!/^3mm/{p}; 
{\ar "v1"; "v0"}?*!/^3mm/{\psi}; 
{\ar "v0"; "v2"}?*!/^3mm/{\pi_0}; 
{\ar "v4"; "v2"}?*!/_3mm/{p}; 
{\ar "v3"; "v4"}?*!/_3mm/{\pi_0}; 
{\ar "v0"; "v7"}?*!/_3mm/{}; 
{\ar "v1"; "v7"}?*!/_3mm/{}; 
{\ar "v2"; "v7"}?*!/_3mm/{}; 
{\ar "v3"; "v7"}?*!/_3mm/{\hat f}; 
{\ar "v4"; "v7"}?*!/^3mm/{g}; 
\end{xy}
\end{equation*}
Since the outer pentagon commutes, the whole diagram commutes with
\begin{equation}
\hat f=g\circ \pi_0,
\end{equation}
which by uniqueness must necessarily always hold.  Thus, to finish the proof, it remains to verify that 
\begin{equation}\label{E:Eval}
\ev_A^T=\widehat{1_{A^T}}=\ev_A^0\circ\pi_0.
\end{equation}
To this effect, recall that  $\ev^0_A=\ev^T_A\circ\langle1,0!\rangle$, so that
\begin{equation}
\ev^0_A\circ\pi_0=\ev^T_A\circ\langle\pi_0,0!\rangle.
\end{equation}
Notice that 
\begin{align*}
\Pi_0(\langle\pi_0,0!\rangle)&=\psi^{-1}\circ\langle\Pi_0(\pi_0),\Pi_0(0!)\rangle\\
&=\psi^{-1}\circ\langle\Pi_0(\pi_0),!\rangle\\
&=\psi^{-1}\circ\langle\Pi_0(\pi_0),\Pi_0(\pi_1)\rangle\\
&=\psi^{-1}\circ\langle\pi_0\circ\psi,\pi_1\circ\psi\rangle\\
&=\psi^{-1}\circ\langle\pi_0,\pi_1\rangle\circ\psi\\
&=1.
\end{align*}
This implies that $\Pi_0(\ev_A^T)=\Pi_0(\ev_A^0\circ\pi_0)$ and---since $p_A$ is an isomorphism---\eqref{E:Eval} as well, which finishes the proof. 
\end{proof}

\begin{prop}
Let $\E$ be a McLarty topos and let $T\in\E$ be an object not 0. If $\ev_0^A:A^T\cong A$ for every $A\in\dec(\E)$, then $T$ is connected.
\end{prop}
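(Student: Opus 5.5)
The plan is to test the hypothesis on the single decidable object $A:=\mathcal I\Pi(T)$ and to show that the unit $u:T\to\mathcal I\Pi T$ of $\Pi\dashv\mathcal I$ is constant. Constancy of $u$ then forces $\Pi T$ to be a retract of $1$. Since $T\neq 0$ and $\E$ satisfies the Nullstellensatz, $T$ has a point $0:1\to T$, and this is the point used in $\ev^0_A$.

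First, global elements of $A^T$ correspond to arrows $T\to A$. Under this correspondence, $\ev^0_A$ sends $h:T\to A$ to $h\circ 0:1\to A$. Consider the two arrows $u$ and $c:=u\circ 0\circ !_T$ from $T$ to $A$. Both have the same value at $0$, because $c\circ 0=u\circ 0\circ !_T\circ 0=u\circ 0$. By hypothesis $\ev^0_A$ is an isomorphism, hence monic, so the corresponding points of $A^T$ coincide and
\[
u=u\circ 0\circ !_T .
\]
Only the monicity of $\ev^0_A$ for this particular $A$ is used.

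Next, apply $\Pi$. Because $\mathcal I$ is fully faithful, the counit $\Pi\mathcal I\to 1$ is an isomorphism. By the triangle identity, $\Pi(u)$ is inverse to it, so up to that identification $\Pi(u)=1_{\Pi T}$. On the other side, $\Pi(u\circ 0\circ !_T)$ factors through $\Pi(1)$, and $\Pi(1)\cong 1$ because $\Pi$ preserves finite products. Hence $1_{\Pi T}$ factors through $1$, so $\Pi T$ is a retract of $1$, that is, a subterminal object. Finally, $\Pi T$ has a point, namely $\Pi(0)$, so this subterminal object is $1$ and $\Pi T\cong 1$. Thus $T$ is connected.

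The step needing most care is the identification $\Pi(u)=1$ modulo the counit isomorphism, together with keeping $\mathcal I$ explicit in the notation. This is routine adjunction bookkeeping, so I do not expect a real obstacle.
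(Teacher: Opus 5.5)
Your argument is correct, but it follows a different route from the paper.

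\textbf{The paper's route.} The paper argues by Yoneda. It uses that $\ev^0_A$ is an isomorphism natural in $A$ and that $\Pi$ preserves binary products. From these it builds bijections
$\dec(\E)(\Pi X\times\Pi T,A)\cong\E(X\times T,A)\cong\E(X,A^T)\cong\E(X,A)\cong\dec(\E)(\Pi X,A)$, natural in $A\in\dec(\E)$. It concludes $\Pi X\times\Pi T\cong\Pi X$ for every $X$ and then sets $X=1$. This needs the hypothesis at every decidable $A$, together with naturality in $A$. On the other hand, it never uses that the isomorphism is evaluation at a point: any natural isomorphism $A^T\cong A$ would do. It also gives the stronger intermediate fact $\Pi X\times\Pi T\cong\Pi X$ for all $X$.

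\textbf{Your route.} You test the hypothesis on a single decidable object, $\mathcal I\Pi T$, using the universal map $u$ into decidables. You only need $\ev^0_{\mathcal I\Pi T}$ to be injective on global elements. In exchange, you do use the specific form of the isomorphism, evaluation at $0$, to identify $u$ with the constant map $u\circ 0\circ !_T$. Your version is therefore more elementary and works under a much weaker hypothesis.

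\textbf{A wording point.} Once you have $1_{\Pi T}=s\circ r$ with $r:\Pi T\to 1$ and $s:1\to\Pi T$, the composite $r\circ s:1\to 1$ is automatically the identity because $1$ is terminal. So $\Pi T\cong 1$ immediately, and the detour through ``subterminal with a point'' is unnecessary. Also, ``retract of $1$, that is, subterminal'' states as an equivalence something that only holds in one direction.
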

\begin{proof}
Since $\ev_0^A$ is natural in $A$, the following bijections are natural in $A$:
\[\begin{tabular}{c}
$\Pi X\times\Pi T\rightarrow A$ \\
\hline
$X\times T\rightarrow A$ \\
\hline
$X\rightarrow A^T$ \\
\hline
$X\rightarrow A$ \\
\hline
$\Pi X\rightarrow A.$
\end{tabular}\]
Hence, by Yoneda, $\Pi X\times\Pi T\cong\Pi X$ for every $X\in\E$; in particular, for $X=1$,
\[
\Pi T\cong 1\times\Pi T\cong 1. 
\]
\end{proof}

\begin{proof}[Proof of \ref{T:YandYsubTsamepoints}]
By the previous result, for every $A\in\dec(\E)$,
\[
\ev_A^0:A^T\cong A.
\]
Hence the following bijections are natural in $A$:
\[\begin{tabular}{c}
$A\rightarrow\Gamma Y$ \\
\hline
$\mathcal{I}A\rightarrow Y$ \\
\hline
$\mathcal{I}A^T\rightarrow Y$ \\
\hline
$\mathcal{I}A\rightarrow Y_{T}$ \\
\hline
$A\rightarrow\Gamma(Y_{T})$
\end{tabular}\]
Therefore, by Yoneda, $\Gamma Y\cong\Gamma(Y_{T})$.
\end{proof}

\begin{teorema}
Let $\E$ be a McLarty topos. If $T$ is an atomic object of $\E$ then 
\[\Pi(X_{!}):\Pi(X_{T})\rightarrow \Pi(X)\]
is an isomorphism for every $X\in\E$.
\end{teorema}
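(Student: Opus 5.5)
The plan is to show that $\Pi(X_!)$ is a split epimorphism with a section that is also a retraction. Throughout, I would work with maps $\Pi(X_T)\to A$ for $A\in\dec(\E)$, via Yoneda and the adjunction $\Pi\dashv\mathcal I$. Since $\E$ satisfies the Nullstellensatz and $T\neq 0$ (initial objects are not atomic), $T$ has a point $p:1\to T$. Put $r:=p\circ !:T\to T$, which is idempotent. From $!\circ p=1_1$ we get $X_!\circ X_p=1_X$, as in the proof of Lemma \ref{L:superTnsubTarefaithful}. Applying $\Pi$ then shows that $\Pi(X)$ is a retract of $\Pi(X_T)$ via $\Pi(X_p)$ and $\Pi(X_!)$. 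It therefore suffices to prove that $\Pi(X_p\circ X_!)=\Pi(X_r)$ is the identity. Equivalently, for every decidable $A$ and every $g:X_T\to\mathcal I A$, we must show $g\circ X_r=g$.

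The key input is that decidable objects see no motion along $T$. By Corollary \ref{C:tinyimpliesconnected}, $T$ is connected, so Lemma \ref{L:Nomotionispossible} gives $\ev^0_A:A^T\cong A$. Its inverse is necessarily $\sigma^T_A$, because $\ev^0_A\circ\sigma^T_A=1$. Consequently $A^r=\sigma^T_A\circ\ev^p_A=1_{A^T}$ for decidable $A$. Naturality of $(-)^r$ then yields
\[
g^T\circ (X_T)^r \;=\; A^r\circ g^T \;=\; g^T .
\]
The defining square \eqref{E:xtothefraction1overf}, applied to $f=r$, gives $\varepsilon_{T,X}\circ (X_r)^T=\varepsilon_{T,X}\circ (X_T)^r$. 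In other words, $X_r$ is precisely the transpose of $\varepsilon_{T,X}\circ (X_T)^r$.

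The remaining step, which I expect to be the main obstacle, is to pass from $g^T\circ (X_T)^r=g^T$ to $g\circ X_r=g$. The difficulty is that $g$ maps \emph{out of} $X_T$, so the adjunction $(-)^T\dashv(-)_T$ cannot be used to transpose it directly. My first attempt would be to replace $g$ by its transpose-friendly version $(g)_T\circ\eta_{T,X_T}:X_T\to (A^T)_T\cong A_T$, and compare $g$ with $A_!\circ g_T\circ\ldots$ using the functoriality $T\mapsto Y_T$. Specifically, I would use naturality of $(-)_r$ in $Y$, namely $g_T\circ (X_T)_r=(\mathcal IA)_r\circ g_T$, together with an identification $(\mathcal IA)_r=1$. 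That identification should follow from the bijection between natural transformations $(-)_T\Rightarrow(-)_T$ and $(-)^T\Rightarrow(-)^T$ established earlier, evaluated at a decidable object where $(-)^r$ is the identity. The obstruction is that this bijection is natural in all objects, not only in decidable ones. If that route fails, the fallback is to exploit faithfulness of $(-)^T$ (Lemma \ref{L:superTnsubTarefaithful}) and the epimorphism $\varepsilon_{T,X}$, so as to reduce the equation $g\circ X_r=g$ to the equation $g^T\circ(X_T)^r=g^T$ already established.

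Once $g\circ X_r=g$ holds for all decidable $A$, Yoneda gives $\Pi(X_r)=1_{\Pi(X_T)}$. Hence $\Pi(X_!)$ is an isomorphism with inverse $\Pi(X_p)$. This completes the second isomorphism of \eqref{E:GammasubTandPisubT}, alongside Theorem \ref{T:YandYsubTsamepoints}.
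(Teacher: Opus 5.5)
Your opening reduction is sound and matches the paper's starting point. Since $X_!\circ X_p=1_X$, $\Pi(X_p)$ is a section of $\Pi(X_!)$. So the theorem is equivalent to $\Pi(X_r)=1$, i.e.\ to $g\circ X_r=g$ for every $g:X_T\to\mathcal IA$ with $A$ decidable. Your computations $A^r=1$ and $g^T\circ(X_T)^r=g^T$ are also correct. But because of that equivalence, the whole content of the theorem lies in the step you leave open, and neither route closes it.

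\textbf{Route 1.} It needs $(\mathcal IA)_r=1$. Since $A_!\circ A_p=1$, this says $A_p$ is an isomorphism. By Theorem~\ref{T:Ttiny.TcontractibleiffnAiso}, having this for all decidable $A$ is equivalent to $T$ being contractible, which the paper explicitly leaves open for general McLarty toposes. The obstruction you sense is real: $A_r$ is the transpose of $\varepsilon_{T,A}\circ(A_T)^r$, and $A_T$ need not be decidable. Even granting it, naturality of $(-)_r$ gives $g_T\circ(X_T)_r=(\mathcal IA)_r\circ g_T$. That concerns $(X_T)_r$ on $(X_T)_T$, not $X_r$ on $X_T$.

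\textbf{Route 2.} Faithfulness of $(-)^T$ reduces the goal to $g^T\circ(X_r)^T=g^T$. However, $(X_r)^T$ and $(X_T)^r$ are only known to agree after composing with $\varepsilon_{T,X}$, and $g^T$ does not factor through $\varepsilon_{T,X}$. Moreover, epicness of $\varepsilon_{T,X}$ only lets you cancel on the right of maps out of $X$, whereas $g$ is a map out of $X_T$.

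What is missing is an essential use of the Nullstellensatz, together with applying your no-motion observation on the \emph{domain} side of elements of $X_T$ rather than on the codomain of maps out of it. For a point $b:1\to X_T$, the transpose of $X_r\circ b$ is $\varepsilon_{T,X}\circ(X_r)^T\circ b^T=\varepsilon_{T,X}\circ(X_T)^r\circ b^T=\varepsilon_{T,X}\circ b^T\circ 1^r=\varepsilon_{T,X}\circ b^T$, since $1^T\cong 1$. Hence $X_r\circ b=b$: every point of $X_T$ factors through $X_p$.

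Now take $g:X_T\to\mathcal IA$ with $A$ decidable. The equalizer of $g$ and $g\circ X_r$ is a complemented subobject of $X_T$. A nonzero complement would have a point $c$ by the Nullstellensatz. But $g\circ X_r\circ c=g\circ c$ puts $c$ in the equalizer as well, which is impossible in a nondegenerate topos.

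This is essentially the paper's proof. It takes the complement $\overbar B$ of $\Pi(X_0)$ in the Boolean topos $\dec(\E)$ and pulls it back along $p_{X_T}$ to a summand $B$ of $X_T$. It then transposes a point $b$ of $B$ to $1^T\to X$ to show that $b$ factors through $X_0$, forcing $B=0$.
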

\begin{proof}
Let $T$ be an atomic object of $\E$. By \ref{C:tinyimpliesconnected}, $T$ is connected; hence $T$ has a point $0:1\rightarrow T$. Therefore, there are arrows
\[\xymatrix{
X\ar[r]^(.42){X_{0}} & X_{T}\ar[r]^(.58){X_{!}} & X\ar@{}|{=}[r] & X\ar[r]^1 & X.
}\]
Whence $\Pi(X_{0}):\Pi X\rightarrow\Pi(X_{T})$ is a split monic. That is, $\Pi X$ is a subobject of $\Pi(X_{T})$ in $\dec(\E)$. So, as $\dec(\E)$ is a topos (see Theorem C in \cite{MR4928709}), $\Pi X$ has a complement $\overbar{B}$. Therefore, since $\E$ is extensive, one has the following diagrams of pullbacks:
\[\xymatrix{
A\ar@{(>->}[rr]^(.47){i_A}\ar[d]_{(p_{X_{T}})^{-1}(\Pi X)} & & A+B\ar[d]^{p_{X_{T}}} & & B\ar@{(>->}[ll]_(.45){i_B}\ar[d]^{(p_{X_{T}})^{-1}(\overbar{B})} \\
\Pi X\ar@{(>->}[rr]_(.42){\Pi(X_{0})} & & \Pi(X_{T}) & & \overbar{B}\ar@{(>->}[ll]^(.4){i_{\overbar{B}}}
}\]
with $X_{T}=A+B$.

Suppose $B\neq 0$; so let $b':1\rightarrowtail B$ be a point, and let $b:=i_B\circ b'$. Let $r':1^T\rightarrow X$ be the transpose of $b$; that is, the following diagram commutes:
\[\xymatrix{
1^T\ar@/^.7pc/[dr]^{r'}\ar[d]_{b^T} & \\
(X_{T})^T\ar[r]_(.62){\varepsilon_{T,X}} & X.
}\]
Let $r$ be the following composite arrow:
\[\xymatrix{
1\ar[r]^(.4){\varepsilon^{-1}_{1,1}}_\cong\ar@/_1.5pc/[drrr]_r & (1_{1})^1\ar[r]^{(1_{1})^{!}}_\cong & (1_{1})^T\ar[r]^(.65)\cong & 1^T\ar[d]^{r'} \\
& & & X.
}\]
The following diagram commutes:
\[\xymatrix{
 & 1^T\ar@/_.7pc/[dl]_(.4)\cong\ar@/^6pc/[ddd]^{r'} & \\
(1_{1})^T\ar[r]^{(1_{1})^0}\ar[d]_{(r_{1})^T} & (1_{1})^1\ar[d]_{(r_{1})^1}\ar[dr]^{\varepsilon_{1,1}} & \\
(X_{1})^T\ar[r]_{(X_{1})^0}\ar[d]_{(X_{0})^T} & (X_{1})^1\ar[d]_{\varepsilon_{1,X}} & 1\ar[dl]_r \\
(X_{T})^T\ar[r]_(.62){\varepsilon_{T,X}} & X &
}\]
Therefore $b=X_{0}\circ r_{1}\circ\cong$. Hence the following diagram commutes:
\[\xymatrix{
1\ar@/^.7pc/[dr]^b\ar[d]_{r_{1}\circ\cong} & \\
X\ar[r]_{X_{0}}\ar[d]_{p_X} & X_{T}\ar[d]^{p_{X_{T}}} \\
\Pi X\ar[r]_(.4){\Pi(X_{0})} & \Pi(X_{T}).
}\]
So there is a unique arrow $1\rightarrow A$ making the following diagram commute:
\[\xymatrix{
1\ar@/^.7pc/[drr]^b\ar@/_.7pc/[ddr]_{p_X\circ r_{1}\circ\cong}\ar@{-->}[dr] & & \\
& A\ar[r]_{i_A}\ar[d] & X_{T}\ar[d]^{p_{X_{T}}} \\
& \Pi X\ar[r]_(.4){\Pi(X_{0})} & \Pi(X_{T}),
}\]
which is a contradiction, and since $\teo(\E)$ is complete and consistent, $B=0$. Therefore $\Pi(X_{0})$ is epic, from which the conclusion follows.
\end{proof}

It is straightforward to verify that for any $X$, the arrow $X_0$ is the transpose of $\ev_X^0$ with respect to the adjunction $(-)^T\dashv(-)_T$, that is.

\[
X_0=(\ev_X^0)_T\circ\eta_X.
\]
and 
\[
\ev_X^0=\varepsilon_X\circ(X_0)^T.
\]

\begin{teorema}\label{T:Ttiny.TcontractibleiffnAiso}
Let $\E$ be a McLarty topos. Let $T$ be an atomic object of $\E$. Then $T$ is contractible iff $A_0:A\rightarrow A_T$ is an isomorphism for every $A\in\dec(\E)$.
\end{teorema}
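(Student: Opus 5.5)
Recall from \cite{MR4922622} that $T$ is contractible iff $\Pi(T^X)\cong 1$ for every $X$. By the remark just after the definition (and as used in Section \ref{S:McLartyAtoms}), $T$ is contractible iff $\Pi(X^T)\cong\Pi(X)$ for every $X$, the comparison being induced by $\ev^0_X:X^T\to X$. The plan is to turn this condition on $\Pi$ into a condition on $(-)_T$ restricted to decidables, using $\Pi\dashv\mathcal I$ and $(-)^T\dashv(-)_T$. The key identity is the one recorded just before the statement,
\[
\ev_X^0=\varepsilon_X\circ(X_0)^T,\qquad X_0=(\ev_X^0)_T\circ\eta_X,
\]
so $A_0$ is the mate of $\ev^0_A$.

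For $X\in\E$ and $A\in\dec(\E)$, I will write down two chains of natural bijections. The first is
\[
\dec(\E)(\Pi(X^T),A)\cong\E(X^T,\mathcal IA)\cong\E(X,(\mathcal IA)_T).
\]
The second is
\[
\dec(\E)(\Pi X,A)\cong\E(X,\mathcal IA).
\]
Precomposition with $\Pi(\ev^0_X)$ on the first side corresponds, under these bijections, to postcomposition with $A_0:\mathcal IA\to(\mathcal IA)_T$ on the second side. This follows by transposing $g\circ\ev^0_X=g\circ\varepsilon\circ(X_0)^T$ and using naturality of $\ev^0$ together with the displayed mate identity.

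Hence the following are equivalent:
\begin{enumerate}
\item $\Pi(\ev^0_X)$ is an isomorphism for all $X$;
\item $\E(X,A_0)$ is a bijection for all $X$ and all decidable $A$;
\item by Yoneda, $A_0$ is an isomorphism for all $A\in\dec(\E)$.
\end{enumerate}
For (1)$\Leftrightarrow$(2), an arrow in $\dec(\E)$ is an isomorphism iff $\dec(\E)(-,A)$ of it is bijective for all $A$. For (2)$\Leftrightarrow$(3) we quantify over all $X$. This gives both directions at once.

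The main obstacle is matching the paper's definition of contractibility with the $\Pi(X^T)\cong\Pi X$ formulation. Two points need care. First, the isomorphism must be the canonical one induced by $\ev^0$, not an abstract isomorphism. Second, $T$ is connected and pointed by Corollary \ref{C:tinyimpliesconnected}, so the point $0$ exists. If the cited equivalence from \cite{MR4922622} only gives an abstract isomorphism, I would argue as follows. Since $\ev^0_X$ is split by $X^!$, the map $\Pi(\ev^0_X)$ is split epi. In a Boolean $\dec(\E)$ with the Nullstellensatz, an abstract isomorphism together with a split epi should force the canonical map to be an isomorphism, by the complement argument used in the preceding theorem on $\Pi(X_!)$. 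Alternatively, I would apply the equivalence directly to the contractibility criterion $\Pi(T^X)=1$ with $X=A$ decidable.
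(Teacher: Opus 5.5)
Your argument is correct and is essentially the paper's proof. The paper likewise passes between $\dec(\E)(\Pi(X^T),A)$ and $\E(X,(\mathcal IA)_T)$ using $\Pi\dashv\mathcal I$ and $(-)^T\dashv(-)_T$, identifies $1_A$ with $A_0$ (respectively $1_{\Pi X}$ with $\Pi(\ev^0_X)$), and concludes each direction by Yoneda, taking from Theorem~1.4 of \cite{MR4922622} that contractibility means exactly that the canonical map $\Pi(\ev^0_X)$ (equivalently $\Pi(X^!)$) is invertible. So your fallback for a merely abstract isomorphism is unnecessary, and the principle it relies on is false: a split epi between isomorphic objects need not be invertible (e.g.\ $n\mapsto\lfloor n/2\rfloor$ on $\N$ in $\set$), so drop it.
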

\begin{proof}
By \ref{C:tinyimpliesconnected}, $T$ is connected. Hence
\[
\ev_A^0:A^T\cong A
\]
by \ref{L:Nomotionispossible}.

Suppose $T$ is contractible; that is, by Theorem 1.4 in \cite{MR4922622},
\[
\Pi(\ev_X^0):\Pi(X^T)\rightarrow\Pi X
\]
is an isomorphism for every $X\in\E$ (the inverse of $\sigma_X$ is $\ev_X^0$). Therefore one has the following natural isomorphism:
\[\begin{tabular}{c}
$X\rightarrow\mathcal{I}A$ \\
\hline
$\Pi X\rightarrow A$ \\
\hline
$\Pi(X^T)\rightarrow A$ \\
\hline
$X^T\rightarrow\mathcal{I}A$ \\
\hline
$X\rightarrow(\mathcal{I}A)_T$
\end{tabular}\]
This yields the following commutative diagram of corresponding arrows:
\[\xymatrix{
& X\ar[dl]_{p_X}\ar[dr]^f & \\
\Pi X\ar[rr]^{\bar{f}} & & A & & (A_T)^T\ar[ll]_{\varepsilon_A}\\
& \Pi(X^T)\ar[ul]^{\Pi(\ev_X^0)}_\cong\ar[ur]_{\tilde{f}} & & X^T\ar[ll]^(.4){p_{X^T}}\ar[ul]_{\hat{f}}\ar[ur]_{(f')^T} &
}\]
So, if $f=1_A$ for $A\in\dec(\E)$, then the diagram
\[\xymatrix{
& A\ar[dl]_{p_A}\ar[dr]^1 & \\
\Pi A\ar[rr]^{p_A^{-1}} & & A & & (A_T)^T\ar[ll]_{\varepsilon_A}\\
& \Pi(A^T)\ar[ul]^{\Pi(\ev_A^0)}_\cong\ar[ur]_{\tilde{1}} & & A^T\ar[ll]^(.4){p_{A^T}}\ar[ul]_{\ev_A^0}\ar[ur]_{(A_0)^T} &
}\]
commutes, and, by Yoneda, $A_0$ is an isomorphism.

Conversely, suppose $A_0:A\rightarrow A_T$ is an isomorphism for every $A\in\dec(\E)$. Therefore one has the following natural isomorphism:
\[\begin{tabular}{c}
$\Pi X\rightarrow A$ \\
\hline
$X\rightarrow\mathcal{I}A$ \\
\hline
$X\rightarrow(\mathcal{I}A)_T$ \\
\hline
$X^T\rightarrow\mathcal{I}A$ \\
\hline
$\Pi(X^T)\rightarrow A$
\end{tabular}\]
This yields the following commutative diagram of corresponding arrows:
\[\xymatrix{
& (\Pi X)^T\ar[dr]^{f^T} & \\
X^T\ar[ur]^{(p_X)^T}\ar[rr]^{(\bar{f})^T}\ar[drr]_{(\tilde{f})^T}\ar@/_.7pc/[ddrr]_{\hat{f}}\ar[dd]_{p_{X^T}} & & A^T\ar[d]_{(A_0)^T}^\cong\ar@/^2.5pc/[dd]^{\ev_A^0} \\
& & (A_T)^T\ar[d]_{\varepsilon_A} \\
\Pi(X^T)\ar[rr]_{f'} & & A 
}\]
So, if $f=1_{\Pi X}$, then the diagram
\[\xymatrix{
& (\Pi X)^T\ar[dr]^{1^T} & \\
X^T\ar[ur]^{(p_X)^T}\ar[rr]^{(p_X)^T}\ar[drr]_{(\tilde{1})^T}\ar@/_.7pc/[ddrr]_{\hat{1}}\ar[dd]_{p_{X^T}} & & (\Pi X)^T\ar[d]_{((\Pi X)_0)^T}^\cong\ar@/^3pc/[dd]^{\ev_{\Pi X}^0} \\
& & ((\Pi X)_T)^T\ar[d]_{\varepsilon_{\Pi X}} \\
\Pi(X^T)\ar[rr]_{1'} & & \Pi X 
}\]
commutes. On the other hand, the following diagram commutes:
\[\xymatrix{
X^T\ar[rr]^{(p_X)^T}\ar[dr]^{\ev_X^0}\ar[dd]_{p_{X^T}} & & (\Pi X)^T\ar[dd]^{\ev_{\Pi X}^0} \\
& X\ar[dr]_{p_X} & \\
\Pi(X^T)\ar[rr]_{\Pi(\ev_X^0)} & & \Pi X
}\]
Therefore $1'=\Pi(\ev_X^0)$ since $p_{X^T}$ is epic. Now, by Yoneda, $\Pi(\ev_X^0)$ is an isomorphism; hence so is $\Pi(\sigma_X)$. In other words, $T$ is contractible.
\end{proof}

The following result is a strengthening of this and is the second part of \hyperlink{T:TeorE}{Theorem E}. 

\begin{teorema}
Let $\E$ be a McLarty topos with an atomic object $T$. Then $T$ is contractible iff $2_0:2\rightarrow 2_T$ is an isomorphism.
\end{teorema}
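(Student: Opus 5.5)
By Theorem \ref{T:Ttiny.TcontractibleiffnAiso}, it suffices to show that if $2_0:2\to 2_T$ is an isomorphism, then $A_0:A\to A_T$ is an isomorphism for every $A\in\dec(\E)$. The converse direction is the special case $A=2$ of that theorem. The plan is to transfer the statement for $2$ to an arbitrary decidable $A$ by exploiting the structure of $\dec(\E)$. This is a Boolean topos, and by \cite{MM2024bis} it is equivalent to the Boolean base.

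The first step is to reduce to a question inside $\dec(\E)$. By Corollary \ref{C:tinyimpliesconnected}, $T$ is connected, so Lemma \ref{L:Nomotionispossible} gives $\ev^0_A:A^T\cong A$ for all decidable $A$. Moreover, for decidable $B$, arrows $\mathcal I B\to A_T$ correspond to arrows $B^T\cong B\to A$. Thus $\Gamma(A_T)\cong A$ via $A_0$, which is essentially Theorem \ref{T:YandYsubTsamepoints}. Hence $A_0$ is the unit-type arrow $A\to \Gamma(A_T)\rightarrowtail A_T$, and $A_0$ is an isomorphism precisely when $A_T$ is decidable, i.e.\ when the counit $\mathcal I\Gamma(A_T)\to A_T$ is an isomorphism. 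So the statement to prove becomes: if $2_T$ is decidable, then $A_T$ is decidable for every decidable $A$.

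The second step uses that decidable objects embed in powers of $2$. In the Boolean topos $\dec(\E)$, the subobject classifier is $2$. The singleton map gives a mono $A\rightarrowtail 2^A$ with $2^A$ decidable, since $\dec(\E)$ is an exponential ideal. Because $(-)_T$ is a right adjoint, it preserves monos, so $A_T\rightarrowtail (2^A)_T$. Taking $p=0$ in Theorem A, the mono $j_{0,A}:A_T\rightarrowtail(\subob_T)^A$ is already available. I would rather use the following computation. For a decidable $A$, arrows $X\to(2^A)_T$ correspond to arrows $X^T\to 2^A$, equivalently $X^T\times A\to 2$. Applying $X^T\times A\cong (X\times A)^T$, which holds because $A^T\cong A$ (Lemma \ref{L:Nomotionispossible}), these correspond to arrows $X\times A\to 2_T$, i.e.\ $X\to(2_T)^A$. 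By Yoneda, $(2^A)_T\cong(2_T)^A$. Under the hypothesis this is $\cong 2^A$, which is decidable.

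The third step, and the main obstacle, is the last one. Subobjects of decidable objects are decidable, so $A_T$ is decidable, and by the first step $A_0$ is an isomorphism. The delicate part is the isomorphism $X^T\times A\cong (X\times A)^T$. It requires that the map $A\to A^T$ given by $\sigma_A$ be an isomorphism with inverse $\ev^0_A$. It also requires checking that the resulting identification $(2^A)_T\cong 2^A$ is compatible with $A_0$ and $2_0$, so that no unrelated isomorphism is being used. I would verify this compatibility through the naturality of the chain of bijections in $X$ and by tracking the transposes of $\ev^0$, as is done in the proof of Theorem \ref{T:Ttiny.TcontractibleiffnAiso}. Once that is done, Theorem \ref{T:Ttiny.TcontractibleiffnAiso} completes the proof.
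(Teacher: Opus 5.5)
Your argument is correct, but it takes a different route from the paper for the converse direction.

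\textbf{The paper's route.} The paper proves the converse directly at the level of $\Pi$. It uses the chain $X\to 2$, $X\to 2_T$, $X^T\to 2$, $\Pi(X^T)\to 2$, which is available because $2_0$ is invertible. This shows that restriction along $\Pi(\sigma^T_X)$ is a bijection from arrows $\Pi(X^T)\to 2$ to arrows $\Pi X\to 2$. Booleanness of $\dec(\E)$ then forces the complement of $\Pi X$ in $\Pi(X^T)$ to be initial. Theorem \ref{T:Ttiny.TcontractibleiffnAiso} is used only for the easy direction.

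\textbf{Your route.} You bootstrap from $2$ to every decidable $A$, using:
\begin{itemize}
\item the mono $A\rightarrowtail 2^A$;
\item the interchange $(2^A)_T\cong(2_T)^A$, valid because $A^T\cong A$; this is the same isomorphism as $(\subob^{Y^T})_T\cong(\subob_T)^Y$ in the proof of Theorem \ref{T:Monicity of j};
\item the exponential-ideal property of $\dec(\E)$;
\item closure of decidable objects under subobjects.
\end{itemize}
You then apply Theorem \ref{T:Ttiny.TcontractibleiffnAiso} in both directions.

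\textbf{Comparison.} Your route is more modular and yields more. As soon as $2_T$ is merely decidable, $(-)_T$ restricts to an endofunctor of $\dec(\E)$ isomorphic to the identity via the $A_0$. Your second step also does not need Booleanness, since $A\rightarrowtail 2^A$ exists for any decidable $A$ in any topos. The paper's route avoids the interchange isomorphism and the detour through all decidable objects.

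\textbf{Where the care is actually needed.} The ``delicate part'' you flag in the third step is not an issue. There you only conclude that $A_T$ is decidable, which is invariant under isomorphism. So any isomorphism $(2^A)_T\cong 2^A$ will do. Likewise, $X^T\times A\cong (X\times A)^T$ needs only some isomorphism $A\cong A^T$; naturality in $X$ is automatic.

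The compatibility that genuinely matters is in your first step. Theorem \ref{T:YandYsubTsamepoints} gives $\Gamma(A_T)\cong A$ only abstractly. An abstract isomorphism $A\cong A_T$ would not force the split mono $A_0$ to be invertible. What you need is that the bijection $\E(\mathcal I B,A)\cong\E(\mathcal I B,A_T)$, for decidable $B$, is postcomposition with $A_0$. This follows from naturality of $\ev^0$ together with $X_0=(\ev^0_X)_T\circ\eta_X$. For $g:\mathcal I B\to A$, the transpose of $g\circ\ev^0_B=\ev^0_A\circ g^T$ is $(\ev^0_A)_T\circ\eta_A\circ g=A_0\circ g$. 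Hence $A_0$ is an isomorphism followed by the counit $\mathcal I\Gamma(A_T)\rightarrowtail A_T$, as you claimed. So $A_0$ is invertible exactly when $A_T$ is decidable.
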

\begin{proof}
Suppose $T$ is contractible. By \ref{T:Ttiny.TcontractibleiffnAiso}, $A_0:A\rightarrow A_T$ is an isomorphism for every $A\in\dec(\E)$; in particular, for 2, which is decidable.

Conversely, suppose $2_0$ is an isomorphism. By the proof of \ref{L:superTnsubTarefaithful}, for every $X\in\E$
\[
\varepsilon_{T,X}\circ(X_T)^!\cong X_!
\]
Hence, for every $X\in\E$
\begin{equation}\label{D:nsigmaepsilon}
\varepsilon_{T,X}\circ(X_T)^!X_0\cong1.
\end{equation}

By \eqref{D:nsigmaepsilon} and the naturality of $(-)^!$, the following diagram commutes:
\[\xymatrix{
\Pi X\ar[r]^{\Pi(X^!)}\ar[d]^{\Pi g} & \Pi(X^T)\ar[d]^{\Pi(g^T)} \\
\Pi 2\ar[r]^(.45){\Pi(2^!)}\ar[d]^{\Pi(2_0)}\ar@/_5.7pc/[ddr]_{\cong} & \Pi(2^T)\ar[d]^{\Pi((2_0)^T)} \\
\Pi(2_T)\ar[r]_(.45){\Pi((2_T)^!)} & \Pi((2_T)^T)\ar[d]^{\Pi(\varepsilon_{T,2})} \\
& \Pi 2\ar[d]^{(p_2)^{-1}} \\
& 2
}\]
By the universality of $p:X\rightarrow\Pi X$, every $h:\Pi X\rightarrow 2$ is the composite $(p_2)^{-1}\circ\cong\circ\Pi g$ for a unique arrow $g:X\rightarrow 2$ in $\E$. Hence the previous diagram says that every $h:\Pi X\rightarrow 2$ factors through $\Pi(X^!)$ as indicated, and thus also through $\Pi(\sigma^T_X)$. 

That is, every $h:\Pi X\rightarrow 2$ can be extended to $\Pi(X^T)$. Since  $\Pi(\sigma^T_X)$ is monic, the extension is unique. And vice versa, every arrow $f:\Pi(X^T)\rightarrow 2$ can be restricted to $\Pi X$ via $\Pi(\sigma^T_X)$. So denote by $\theta(h)$ that extension of $h$, and by $\phi(f)$ that restriction of $f$. It is clear that $\phi\theta=1$.

To verify that $\theta\phi=1$, given $f:\Pi(X^T)\rightarrow 2$, one can get an arrow $g:X\rightarrow 2$ through the natural bijection (natural in $X$)
\[\begin{tabular}{c}
$X\underset{g}{\longrightarrow} 2$ \\
\hline
$X\underset{2_0\circ g}{\longrightarrow} 2_T$ \\
\hline
$X^T\underset{\varepsilon_2\circ (2_0)^T\circ g^T}{\longrightarrow} 2$ \\
\hline
$\Pi(X^T)\underset{p^{-1}\circ\Pi(\varepsilon_2\circ (2_0)^T\circ g^T)}{\longrightarrow} 2$
\end{tabular}\]
by going upwards. So, by going downdwards, one gets $f$ back, which is the composite at the end. Hence $\theta\phi(f)=f$. Therefore for $f_i:\Pi(X^T)\to 2$, if
$f_0\Pi(\sigma^T_X)=f_1\Pi(\sigma^T_X)$,
then
\begin{equation}\label{E:uniqueextension}f_0=f_1.
\end{equation}
Now, to see that $\Pi(\sigma^T_X)$ is epic, suppose it is not. Then, since $\dec(\E)$ is a Boolean topos, every subobject is complemented in $\dec(\E)$. Since $\Pi(\sigma^T_X)$ is a monic, $\Pi X$ is a complemented subobject of $\Pi(X^T)$, 
\[
\Pi(X^T)=\Pi X+(\Pi X)^c.
\]
Therefore, $\Pi(\sigma^T_X)$ is epic if and only if $(\Pi X)^c$ is initial. If $\Pi X)^c$ is not initial, there are at least two different arrows $(\Pi X)^c\rightarrow 2$: one $q_0$ sending everything to $\bot$ and one $q_1$ sending everything to $\top$. Therefore given an arrow $h:\Pi X\rightarrow 2$, it has at least two different extensions to $\Pi(X^T)$, v.z. the copairs $f_i:=\{h,q_i\}$. But
\[
\{h,q_0\}\circ\Pi(\sigma^T_X)=h=\{h,q_1\}\circ\Pi(\sigma^T_X).
\] 
Therefore, by \eqref{E:uniqueextension}, $\{h,q_0\}=\{h,q_1\}$, which is a contradiction. 
\end{proof}

\section{In presheaf toposes}\label{S:Grothen}
A couple observations about atomic objects in presheaf toposes with some conditions on the exponent categories are given.
\begin{teorema}\label{T:ContractPresheaf}
If $C$ is a small category with finite products and the canonical geometric morphism $g:[C^{op},\set]\rightarrow\set$ is precohesive, i.e. if $[C^{op},\set]$ is a McLarty topos, then every atomic object of $[C^{op},\set]$ is contractible.
\end{teorema}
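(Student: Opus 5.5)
By the second part of Theorem E (the last theorem of the previous section), it suffices to show that $2_0:2\to 2_T$ is an isomorphism for the atomic object $T$. Here $0:1\to T$ is a point, which exists because $T$ is connected by \ref{C:tinyimpliesconnected}. The plan is to use the presheaf structure to compute $2_T$ explicitly.

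The first step uses Madanshekaf's result quoted before Theorem B. Since $C$ has finite products, $T$ is a retract of a representable $y(c)$. Because $C$ has binary products, the exponential of a presheaf by a representable is
\[ X^{y(c)}(d)\cong X(d\times c). \]
The right adjoint is then given explicitly by
\[ Y_{y(c)}(d)\cong \mathrm{Nat}\bigl((y d)^{y c},Y\bigr)=\mathrm{Nat}\bigl(y(-\times c)\text{ restricted},Y\bigr), \]
i.e. by Yetter's formula $Y_{y(c)}(d)=\E(y(d)^{y(c)},Y)$. For $T$ a retract of $y(c)$, with $T=\mathrm{im}(r)$ for an idempotent $r$, the proof of \ref{T:retractoftinytiny} identifies $Y_T$ as the image of the idempotent $Y_r$ on $Y_{y(c)}$. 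So
\[ Y_T(d)\cong\{\phi\in\E(y(d)^T,Y)\}, \]
naturally in $d$, directly by Yoneda and adjunction.

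The second step specializes to $Y=2$, the constant presheaf $\mathcal I 2=1+1$, which is decidable. Then $2_T(d)\cong\E(y(d)^T,2)$, and since $2$ is decidable this is $\dec(\E)(\Pi(y(d)^T),2)$. The task is therefore to show that $y(d)^T$ is connected for every $d$. Granting this, $2_T(d)\cong 2$ for every $d$, so $2_T$ is the constant presheaf $2$. One must then check that the map $2_0$ realizes this identification; in the Yoneda description it is precomposition with $\ev^0:y(d)^T\to y(d)$ followed by the unique map $y(d)\to 1$, and both endpoints $\top$ and $\bot$ are preserved. Hence $2_0$ is a bijection at each stage $d$ and is an isomorphism.

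The main obstacle is proving that $y(d)^T$ is connected. I would first try the following. The precohesive hypothesis gives that $\Pi=\pi_0$ preserves finite products and that representables are connected; for a presheaf topos over $\set$, $\Pi$ is the colimit functor. Connectedness of the representables follows because each $y(d)$ is nonempty and $\Pi(y(d))=1$. Next, $y(d)^T$ is a retract of $y(d)^{y(c)}$, and $y(d)^{y(c)}(e)=C(e\times c,d)$. The product $y(d)\times y(c)\cong y(d\times c)$ is connected, and the projection maps $y(d\times c)\to y(d)^{y(c)}$ by $\sigma$-transposition. I would show that every element $f:e\times c\to d$ of $y(d)^{y(c)}(e)$ is connected, inside the category of elements, to a constant element. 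This can be done via the point of $c$ inherited from $T$ (using the terminal object of $C$ and the product structure, with elements $e\times c\to c\to d$ linked through $e\times c$). Finally, a retract of a connected nonempty presheaf is connected, because $\Pi$ preserves retracts and a retract of $1$ in $\set$ is $1$. If this zig-zag argument gets stuck, the fallback is to prove contractibility of $y(c)$-retracts directly from finite products: the terminal object gives the point, and connectedness of $X^{y(c)}$ is transported along the product-projection zig-zags.
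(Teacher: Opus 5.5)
Your reduction is fine as far as it goes. Invoking the last theorem of Section~\ref{S:McLartyAtoms} is legitimate, and so is the Yoneda identification $2_T(d)\cong\E(y(d)^T,2)\cong\set(\Pi(y(d)^T),2)$. Your check that $2_0$ sends $\top,\bot$ to the two constant maps works, as does passing from $y(d)^{y(c)}$ to its retract $y(d)^T$.

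The gap is the step you flag as the main obstacle, and it cannot be filled. A point of $e$ lets you move any element $(e,f)$, with $f\colon e\times c\to d$, to an element over the terminal object, i.e.\ to an arrow $g\colon c\to d$. All constant arrows $c\to 1\to d$ then lie in one component, through $(d,\pi_1)$. But morphisms in the category of elements of $C(-\times c,d)$ are only of the form $v\times c$. So joining a nonconstant $g$ (e.g.\ $1_c$ when $d=c$) to a constant needs a homotopy: some $h\colon e\times c\to d$ and points $x_0,x_1$ of $e$ with $h\circ(x_0\times c)=g$ and $h\circ(x_1\times c)$ constant, or a zig-zag of such. Finite products do not supply this, and a point of $c$ does not help.

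In fact the statement is false. Let $C$ be a skeleton of $\mathbf{FinSet}_*^{op}$, the Lawvere theory of pointed sets. It has finite products (wedge sums in $\mathbf{FinSet}_*$). Its terminal object $\{*\}$ is a zero object of $\mathbf{FinSet}_*$, and every object has a point. Hence $[C^{op},\set]=\set^{\mathbf{FinSet}_*}$ is precohesive, with $\Gamma$ and $\Pi$ both evaluation at $\{*\}$. It is also $2$-valued with split supports, so it is McLarty. The representable $U=\mathbf{FinSet}_*(S^0,-)$, with $S^0=\{*,x\}$, is the forgetful functor. It is atomic by Yetter's result, and $U^U(A)=U(A\vee S^0)$. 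Therefore $\Pi(U^U)=U(S^0)$ has two elements, and the identity of $U$ and the constant endomorphism lie in different components. So $U$ is not contractible. Likewise $2_U(S^0)\cong\set(\Pi(U^U),2)$ has four elements, so $2_0$ is not invertible.

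The paper's own proof breaks at the same place. Lemma~\ref{L:representabletoitselfisconnected} uses $\pi_2^{-1}\circ f\colon a\times t\to 1\times t$ as if it were a morphism of the category of elements of $C(-\times t,t)$. It is of the required form $!_a\times t$ only when $f$ factors through $\pi_2$. What both arguments lack is a hypothesis giving the objects of $C$ a contraction as above, as multiplication $R\times R\to R$ does in the usual SDG sites. With such a hypothesis your zig-zag argument goes through.
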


\begin{obse}\label{R:(j,F)connectedcomps}
Given small categories $J,J'$ and a functor $F:J'\rightarrow J$,
\[\Clim J(j,F-)=\C(j\downarrow F),\]
where $\C(j\downarrow F)$ is the set of the connected components of the comma category $(j\downarrow F)$ and $J(j,-):J\rightarrow\set$ is the representable functor of $j$. Indeed, $(u,k)\sim(u',k')$ if and only if there is a zigzag of arrows
\[\xymatrix{
k_0\ar[r]^{v_0} & k_1 & k_2\ar[l]_{v_1}\ar[r]^{v_2} & k_3 &\cdots\ar[l]_{v_3}\ar[r]^{v_{r-1}} & k_r
}\]
in $J'$, with $k_0=k$ and $k_r=k'$, and $(u_i,k_i)\in(j\downarrow F)$ for $i=1,\ldots,r-1$ such that the diagram
\[\xymatrix{
& & j\ar[dll]_u\ar[dl]^{u_1}\ar[d]^(.55){u_2}\ar[dr]^(.6){u_3}\ar[drrr]^{u'} & & &\\
Fk \ar[r]_{Fv_0} & Fk_1 & Fk_2\ar[l]^{Fv_1}\ar[r]_{Fv_2} & Fk_3 &\cdots\ar[l]^{Fv_3}\ar[r]_{Fv_{r-1}} & Fk'
}\]
commutes, where $\sim$ is the equivalence relation $R(S)$ in the previous remark. That is,
\begin{align*}
J(j,Fv_0)(u) &=u_1,\\
J(j,Fv_1)(u_2) &=u_1,\\
J(j,Fv_2)(u_2) &=u_3,\\
\vdots
\end{align*}
\end{obse}

\begin{lema}\label{L:representabletoitselfisconnected}
If $C$ is a small category with finite products, then for every object $t\in C$
\[
\Clim(C(-,t)^{C(-,t)})=1.
\]
\end{lema}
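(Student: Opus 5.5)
We need to show that $\Clim$ of the presheaf $P:=C(-,t)^{C(-,t)}$ is a singleton, where $\Clim$ denotes the colimit over $C^{op}$, i.e. the set of connected components of the category of elements of $P$. The plan is to compute $P$ explicitly using products in $C$, exhibit one element, and join every element to it by a zigzag.

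First, since $C$ has binary products, $C(-,c)\times C(-,t)\cong C(-,c\times t)$. Yoneda then gives
\[
P(c)\cong \mathrm{Nat}(C(-,c)\times C(-,t),C(-,t))\cong \mathrm{Nat}(C(-,c\times t),C(-,t))\cong C(c\times t,t).
\]
For $u:c'\to c$, the restriction map $P(c)\to P(c')$ is precomposition with $u\times 1_t$. Consequently an element of the category of elements is a pair $(c,f)$ with $f:c\times t\to t$. A morphism $(c',f')\to(c,f)$ is an arrow $u:c'\to c$ satisfying $f\circ(u\times 1_t)=f'$.

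Next, choose the base element $(1,\pi_t)$, where $1$ is the terminal object of $C$ and $\pi_t:1\times t\to t$ is the projection (the identity up to isomorphism). This shows the colimit is nonempty.

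Now connect an arbitrary $(c,f)$ to the base element by a two-step zigzag through the object $c\times t$.

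For the first leg, take $u=\pi_c:c\times t\to c$. Its restriction of $f$ is $f\circ(\pi_c\times 1_t):(c\times t)\times t\to t$, which sends $(x,s,s')$ to $f(x,s')$. This gives a morphism $(c\times t,f\circ(\pi_c\times 1))\to(c,f)$, which is the wrong element for what follows.

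So instead use $u=\langle \pi_c,f\rangle:c\times t\to c\times t$? That does not help directly. The better choice is the pullback along $\pi_c$ combined with a second map. Consider $g:=\pi_2:(c\times t)\times t\to t$ and $v=f:c\times t\to t$. Then $\pi_t\circ(v\times 1_t)$, viewed as an element over $c\times t$ pulled back from $(t,\pi_2)$, equals $\pi_2$. Hence every element of the form $(d,\pi_2)$ is connected to $(1,\pi_t)$ via $!:d\to 1$.

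It therefore suffices to connect $(c,f)$ to some element of the form $(d,\pi_2)$. This is the main obstacle. Precomposition only reindexes the $c$ variable and never alters the dependence on the $t$ variable, so any zigzag must exploit spans.

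The key trick is to use $d=c\times t$ with two different maps into $c\times t$, namely $1$ and $\langle\pi_c,\pi_t\rangle$ twisted by the extra $t$. The element $h:(c\times t)\times t\to t$, $h(x,s,s')=f(x,s)$, is the restriction of $(c,f)$ along $u=\pi_c$ only after swapping. Concretely, $h$ is the restriction of $(c\times t,\ f\circ\pi_{c\times t})$ along the identity, and also $h=\pi_2\circ(\langle\ldots\rangle)$ in the following sense. The element $(c\times t,(x,s,s')\mapsto f(x,s))$ is the restriction of $(t,\pi_2)$ along the map $f:c\times t\to t$. It is also the restriction of $(c,f)$? That fails in general.

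I expect the real argument to instead go through the element $(t,\pi_2)$ together with $(t,\pi_1)$, where $\pi_1:t\times t\to t$ is the first projection. The element $(t,\pi_1)$ restricts along $x:c\to t$ to constants in $s$, and $(c,f)$ should be linked to these via the evaluation $f\circ\langle 1,\text{point}\rangle$ when $t$ has a point. I would therefore try to use the global points of $t$ supplied by the Nullstellensatz, and check each leg of the zigzag with the explicit formula for the restriction maps given above.
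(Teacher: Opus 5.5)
Your setup is right and agrees with the paper's: the Yoneda identification $P(c)\cong C(c\times t,t)$, where restriction along $u$ is precomposition with $u\times 1_t$. But there is a genuine gap. The step you flag as the main obstacle, joining an arbitrary $(c,f)$ to an element of the form $(d,\pi_2)$, is never carried out. The last paragraph, which appeals to global points of $t$ from a Nullstellensatz that is not among the hypotheses, is a hope rather than an argument.

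Worse, the gap cannot be filled. Your remark that restriction only reindexes the $c$-variable is fatal, and the statement is false in this generality. Let $C$ have finite products and a zero object $z$, for instance a skeleton of finite pointed sets. Then $z$ is terminal in $C^{op}$, so the colimit over $C^{op}$ is evaluation at $z$:
\[
\Clim\bigl(C(-,t)^{C(-,t)}\bigr)\cong C(z\times t,t)\cong C(t,t).
\]
In your terms, put $0_c\colon t\to z\to c$. The assignment $(c,f)\mapsto f\circ\langle 0_{c},1_t\rangle$ is unchanged along every morphism $u$ of the category of elements, because $u\circ 0_{c'}=0_c$, and it is onto $C(t,t)$. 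Take $t$ to be the two-element pointed set. Then $C(t,t)$ consists of the identity and the constant map, so the colimit has two elements. Every object of this $C$ has a point, so points of $t$ cannot rescue the argument.

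The paper's proof has exactly the defect you isolated. It joins $(a,f)$ to $(1,\pi_2)$ through $\pi_2^{-1}\circ f=\langle !,f\rangle\colon a\times t\to 1\times t$. However, the zigzags in the remark preceding the lemma must consist of arrows $v\times 1_t$, and $\langle !,f\rangle={!}\times 1_t$ only when $f=\pi_2$.

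What survives is the cartesian closed case, which is the setting of Theorem~\ref{T:twosubTisotwo}. There $C(-\times t,t)\cong C(-,t^t)$ is representable, its category of elements has the terminal object $(t^t,1_{t^t})$, and the colimit is $1$.

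The counterexample also satisfies the hypotheses of Theorem~\ref{T:ContractPresheaf}. Presheaves on finite pointed sets form a McLarty topos: both $\Gamma$ and $\Pi$ are evaluation at $z$, every non-initial presheaf has a global element, and the only subterminals are $0$ and $1$. Moreover $C(-,t)$ is atomic by Yetter's criterion, yet $C(-,t)^{C(-,t)}$ is disconnected. So that theorem, which rests on this lemma, fails as well.
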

\begin{proof}
Let $a\in C$. So, by Yoneda,
\begin{align*}
C(-,t)^{C(-,t)}(a) &\cong[C^{op},\set](C(-,a)\times C(-,t),C(-,t))\\
&\cong[C^{op},\set](C(-,a\times t),C(-,t))\\
&\cong C(a\times t,t).
\end{align*}
That is, $C(-,t)^{C(-,t)}=C(-\times t,t)$. Hence, by \ref{R:(j,F)connectedcomps},
\[
\Clim C(-\times t,t)=\C(-\times t\downarrow t).
\]
Let $f:a\times t\rightarrow t$ and $g:b\times t\rightarrow t$ be arrows in $C$. The following triangle diagrams commute:
\[\xymatrix{
a\times t\ar[dr]_f\ar[r]^{\pi_2^{-1}\circ f} & 1\times t\ar[d]^(.4){\pi_2} & b\times t\ar[dl]^g\ar[l]_{\pi_2^{-1}\circ g} \\
& t &
}\]
Therefore $\Clim C(-\times t,t)=1$.
\end{proof}

\begin{lema}\label{L:retractofcontcont}
Let $\E$ be a cartesian closed category with a connectedness structure $p:1\Rightarrow\Pi_0$. Any retract of a $p$-contractible object of $\E$ is $p$-contractible.
\end{lema}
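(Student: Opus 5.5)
We have to show that a retract of a $p$-contractible object is again $p$-contractible. Following the convention of \cite{MR4922622} recalled in the introduction, we take an object $T$ to be $p$-contractible when $T^X$ is $p$-connected for every object $X$, that is, when $\Pi_0(T^X)\cong 1$. Let $Q$ be a retract of a $p$-contractible $T$, witnessed by $i:Q\rightarrowtail T$ and $q:T\twoheadrightarrow Q$ with $q\circ i=1_Q$. The plan is to push this retraction through the two functors $(-)^X$ and $\Pi_0$, which preserve retracts, and then use that a retract of the terminal object is terminal.

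Fix an object $X$. Covariant exponentiation $(-)^X$ is a functor, so $q^X\circ i^X=(q\circ i)^X=1_{Q^X}$. Hence $Q^X$ is a retract of $T^X$. Since $\Pi_0$ is a functor, applying it gives $\Pi_0(q^X)\circ\Pi_0(i^X)=1_{\Pi_0(Q^X)}$, so $\Pi_0(Q^X)$ is a retract of $\Pi_0(T^X)\cong 1$.

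Now let $B$ be any retract of $1$, with section $s:B\to 1$ and retraction $r:1\to B$ satisfying $r\circ s=1_B$. Then $s\circ r=1_1$, because $1$ is terminal and so has only one endomorphism. Thus $s$ and $r$ are mutually inverse and $B\cong 1$. Applying this to $B=\Pi_0(Q^X)$ gives $\Pi_0(Q^X)\cong 1$ for every $X$, which is exactly $p$-contractibility of $Q$.

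The only step needing care is matching the definition of $p$-contractibility with \cite{MR4922622}. If it is instead phrased as "$\Pi_0(\sigma_X)$ is invertible for every $X$", with $\sigma_X:X\to X^T$ the constant map, the argument changes slightly. Naturality of $\sigma$ in the exponent gives $X^i\circ\sigma^T_X=\sigma^Q_X$ and $X^q\circ\sigma^Q_X=\sigma^T_X$. These equations exhibit $\Pi_0(\sigma^Q_X)$ as a retract of the isomorphism $\Pi_0(\sigma^T_X)$ in the arrow category. A retract of an isomorphism is an isomorphism, so the conclusion follows in the same way. Either formulation is a purely formal retract argument, and I expect no real obstacle beyond checking these naturality squares.
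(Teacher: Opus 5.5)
Your retract argument is valid for both formulations you consider, and its core idea is the same as the paper's: a retraction is carried along by whatever structure defines contractibility. The difference is the definition. The paper uses Definition 6.2 of \cite{MR4922622}: $T$ is $p$-contractible if it has a point $t:1\to T$ with $1_T\sim t\,!_T$ for $p$-homotopy. With that definition the proof is the one-liner $1_Q=q\circ i\sim q\circ t\,!_T\circ i=(q\circ t)\,!_Q$. It uses only that $p$-homotopy is compatible with composition on either side, and it exhibits $q\circ t$ as the contracting point of $Q$.

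Your two formulations ($\Pi_0(T^X)\cong 1$ for all $X$, or $\Pi_0(\sigma_X)$ invertible for all $X$) are characterizations the paper reaches only through Theorem 1.4 of \cite{MR4922622}, for instance in McLarty toposes. The introduction's gloss ``$T^X$ connected for all $X$'' is made in that setting. In a bare cartesian closed category with a connectedness structure you cannot take the equivalence for granted; your first formulation does not even require $T$ to be pointed, whereas Definition 6.2 does. As written, your proof covers the situations where the lemma is actually applied, since in the proof of Theorem~\ref{T:ContractPresheaf} the paper invokes Theorem 1.4 both before and after the lemma. For the statement in its full generality, you should either argue from Definition 6.2 as above or cite that equivalence.

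As for what each route buys: the homotopy argument needs nothing about how $\Pi_0$ interacts with exponentials, and it produces the contraction explicitly. Your argument is purely functorial: it uses functoriality of $\Pi_0\circ(-)^X$, plus the facts that a retract of $1$ is terminal and a retract of an isomorphism in the arrow category is an isomorphism. It needs no points, and it applies verbatim to any formulation of contractibility phrased through $\Pi_0$ of exponentials.
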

\begin{proof}
Let $T$ be a $p$-contractible object of $\E$. So $T$ has a point $t:1\rightarrow T$ such that $1_T\sim t!$ (see Definition 6.2 in \citet{MR4922622}). Now, let $S$ be a retract of $T$; that is, let $i:S\rightarrow T$, $r:T\rightarrow S$ be arrows in $\E$ making the following diagram commute:
\[\xymatrix{
S\ar@<.6ex>[r]^i\ar@<-1ex>[rr]_1 & T\ar@<.6ex>[r]^r & S.
}\]
Since $p$-homotopicness behaves well w.r.t. composition (see Definition 3.7 in \citet{MR4922622}), it is readily seen to be $p$-contractible: 
\[
1_S= r\circ i\sim r\circ t!\circ i=rt!. \qedhere
\]
\end{proof}

\begin{proof}[Proof of \ref{T:ContractPresheaf}]
If $X\in [C^{op},\set]$ is atomic, then $(-)^X$ preserves colimits. Hence, by Theorem 1.4.4 in \cite{aM1999}, $X$ is a retract of a representable functor $C(-,t)$ for some $t\in C$. On the other hand, by \ref{L:representabletoitselfisconnected} and Theorem 1.4(4) in \citet{MR4922622}, $C(-,t)$ is contractible. Now, by \ref{L:retractofcontcont} and Theorem 1.4 in \citet{MR4922622}, $X$ is contractible.
\end{proof}

\begin{teorema}\label{T:ContractGrothen}
If $\E$ is a Grothendieck topos whose canonical geometric morphism $g:\E\rightarrow\set$ is precohesive and $\E(1,-)$ preserves colimits, then every atomic object of $\E$ is contractible.
\end{teorema}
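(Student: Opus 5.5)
We reduce to the criterion of Theorem \ref{T:Ttiny.TcontractibleiffnAiso}, or more economically to its strengthening: $T$ is contractible iff $2_0:2\to 2_T$ is an isomorphism. Since $\E$ is Grothendieck and precohesive over $\set$, it is a McLarty topos, so those results apply. By Corollary \ref{C:tinyimpliesconnected}, $T$ is connected and has a point $0:1\to T$. By Lemma \ref{L:superTnsubTarefaithful}, $2_0$ is a split mono with retraction $2_!$. It therefore suffices to show that $2_T$ has no more than the two "constant" parts coming from $2$; more precisely, that $2_!:2_T\to 2$ is monic, or equivalently that $2_T$ is connected with $\Pi(2_T)$ matching.

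The key idea is to use the hypothesis that $\E(1,-)$ preserves colimits, which makes $1$ small-projective. Atomicity of $T$ makes $(-)^T$ preserve colimits, so $(-)_T$ preserves... rather than this, we argue directly on $2=1+1$. Since $(-)^T$ is a left adjoint, it preserves coproducts, and for any $X$,
\[
\E(X,2_T)\cong\E(X^T,1+1).
\]
Decompose $X^T$ using extensivity. Maps $X^T\to 1+1$ correspond to complemented subobjects of $X^T$, that is, to maps $\Pi(X^T)\to 2$. Maps $X\to 2$ correspond to maps $\Pi X\to 2$. Thus $2_0$ being an isomorphism amounts (by Yoneda over all $X$) to the restriction along $\sigma^T_X$ (equivalently along $\ev^0$) inducing a bijection $\dec(\E)(\Pi(X^T),2)\cong\dec(\E)(\Pi X,2)$. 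This is essentially the statement that $\Pi(\sigma_X^T)$ is an isomorphism, i.e.\ contractibility itself. So a direct route must instead exploit the Grothendieck structure.

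The plan is to exploit the Grothendieck structure: present $\E$ as sheaves on a site and write each $X$ as a colimit of representables (sheafified generators) $\mathbf{y}c$. Since $(-)^T$ preserves colimits and $\Pi$ (a left adjoint) preserves colimits, contractibility, i.e.\ $\Pi(X^T)\cong\Pi X$ naturally, reduces to the generators, provided the comparison map $\Pi(\sigma^T)$ is natural, which it is. For a generator $G$, we need $\Pi(G^T)=\Pi G$. Here $\E(1,-)=\Gamma$ preserving colimits is used: $\Gamma$ then agrees with $\Pi$ on enough objects. Concretely, using Proposition \ref{P:1issmallprojectiveinNS}-style reasoning, we show $\Gamma(G^T)\to\Pi(G^T)$ is surjective, which holds by the Nullstellensatz since every component has a point. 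We then show that any two points of $G^T$ lying over the same point of $G$ via $\ev^0$ are connected. A point of $G^T$ is a map $T\to G$, and connectedness of $T$ together with Theorem D ($\Gamma(Y_T)\cong\Gamma Y$) should give a path. This step is the main obstacle, and we would try first to mimic Theorem \ref{T:ContractPresheaf}, with Lemma \ref{L:retractofcontcont} and Madanshekaf's retract-of-representable argument replaced by: $T$ is a retract of a colimit of generators, and small-projectivity of $1$ transports the connectedness computation of Lemma \ref{L:representabletoitselfisconnected} to $\E$.
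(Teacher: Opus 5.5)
Your proposal does not yet reach a proof. The step you flag as ``the main obstacle'' carries all of the content, and the ingredients you propose for it cannot carry it.

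\begin{itemize}
\item The detour through $2_0$ is circular, as you noticed: by the second part of Theorem E it is equivalent to contractibility. Incidentally, $2_T$ is never connected, since $\Pi(2_T)\cong\Pi(2)=2$ by Theorem D.
\item The reduction to generators is sound. Both $X\mapsto\Pi(X^T)$ and $\Pi$ preserve colimits, and $\Pi(\ev^0_X)$ is natural. But it leaves you needing, for every generator $G$ and every atomic $T$, that any two maps $T\to G$ agreeing at $0$ lie in the same component of $G^T$.
\item For this you offer only connectedness of $T$ and Theorem D. Both hold for atomic objects in every McLarty topos, yet the paper leaves contractibility of atomic objects there as an open Question. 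So without some input specific to this theorem, they cannot produce the required paths.
\item Theorem D is moreover about $\Gamma(Y_T)$ and $\Pi(Y_T)$, that is, about the right adjoint, not about components of $G^T$.
\item Lemma \ref{L:representabletoitselfisconnected} concerns $t^t$ for a representable $t$, not $G^T$ for an arbitrary atomic $T$.
\item Your closing remedy, that $T$ is a retract of a colimit of generators, is vacuous, since $T$ is itself such a colimit.
\end{itemize}

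What is missing is to apply the two colimit-preservation facts jointly to $T$ itself. Since $(-)^T$ is a left adjoint and $\E(1,-)$ preserves colimits, $\E(T,-)\cong\E(1,(-)^T)$ preserves colimits, so $T$ is small-projective. Writing $T\cong\Clim_{(a,x)\in\int T}C(-,a)$, you get $\E(T,T)\cong\Clim_{(a,x)}\E(T,C(-,a))$, which is a quotient of $\coprod_{(a,x)}\E(T,C(-,a))$. Hence $1_T=\mu_{(a,x)}\circ\zeta$ for some colimit leg $\mu_{(a,x)}$ and some $\zeta:T\to C(-,a)$. That is, $T$ is a retract of a single representable, not merely of a colimit of them.

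The paper then passes to the Barr--Par\'e site of molecules. There constant presheaves are sheaves, so $\Pi$ is computed as a colimit. It obtains contractibility of representables from Lemma \ref{L:representabletoitselfisconnected} and concludes with Lemma \ref{L:retractofcontcont}. This mirrors the proof of Theorem \ref{T:ContractPresheaf}, with Madanshekaf's retract-of-representable result replaced by the small-projectivity argument above. That is where the hypothesis on $\E(1,-)$ is used, not in comparing $\Gamma$ with $\Pi$ on generators.
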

\begin{proof}
Without loss of generality, suppose $\E$ is a sheaf topos $\sh(C,J)$ with $J$ subcanonical: $(C,J)$ may be standard (see Theorem C2.2.8 in \cite{MR1953060}). And let $g:\sh(C,J)\rightarrow\set$ be the canonical geometric morphism.

Now, consider the canonical coverage $J_{can}$ on $\sh(C,J)$. Hence, by Proposition C2.2.7 in \cite{MR1953060}, every object of $\sh(\sh(C,J),J_{can})$ is isomorphic to a representable functor $\sh(C,J)(-,X)$ for some $X\in\sh(C,J)$, and
\[
\sh(C,J)\simeq\sh(\sh(C,J),J_{can}).
\]
More precisely, the Yoneda embedding $X\mapsto\sh(C,J)(-,X)$ is an equivalence, whose quasi-inverse is the functor that sends a sheaf $F:\sh(C,J)^{op}\rightarrow\set$ to its restriction along the Yoneda embedding $C\rightarrow\sh(C,J)$; that is, $F$ is restricted to the representables of $C$, which are in $\sh(C,J)$ since $J$ is subcanonical.

Let $h:\sh(\sh(C,J),J_{can})\rightarrow\set$ be the canonical geometric morphism. The following diagram commutes up to isomorphism:
\[\xymatrix{
\sh(C,J)\ar[dr]_{g_\ast}\ar[rr]^(.45)Y & & \sh(\sh(C,J),J_{can})\ar[dl]^{h_\ast} \\
& \set. &
}\]
Suppose $g$ is precohesive. By Theorem 3.3 in \cite{MM2024bis}, $\sh(C,J)$ is molecular over $\set$. By Theorem 14 in \cite{MR567064}, every object of $\sh(C,J)$ is a sum of molecules. At the beginning of section 5 in \cite{MR567064} it is noted that, in the $\set$ case, a molecule is an object with only two complemented subobjects: 0 and itself. Since $\sh(C,J)$ is precohesive over $\set$, which is Boolean, by Proposition 1.1 in \cite{MR4928709},
\[
g_!(\text{molecule})=1.
\]
But, by Lemma~\ref{L:representableareindecomposable}, every representable is a molecule.

Now, by Theorem 16 in \cite{MR567064}, $\sh(C,J)$ is equivalent to a category of sheaves on some site for which the constant presheaves are sheaves. More precisely, in the proof of that theorem, the site of definition is constructed explicitly as follows. A set of generators is considered. Every generator is a sum of molecules. Then, in that proof,  they take $\mathcal{M}$ to be the full subcategory determined by those molecules, and consider $\sh(\mathcal{M},J_{can}|_{\mathcal{M}})$, with $J_{can}$ the canonical topology on the topos restricted to $\mathcal{M}$. In this case, every representable functor is a molecule and, since $J$ is subcanonical, the set of generators of $\sh(C,J)$ are the representables. That is, in this case, $\mathcal{M}$ is the full subcategory of the representable functors. By the Comparison Lemma (C2.2.3 in \cite{MR1953060}),
\[
\sh(\sh(C,J),J_{can})\simeq\sh(\mathcal{M},J_{can}|_{\mathcal{M}}),
\]
and if $k:\sh(\mathcal{M},J_{can}|_{\mathcal{M}})\rightarrow\set$ is the canonical geometric morphism, the following diagram commutes:
\[\xymatrix{
\sh(C,J)\ar[dr]_{g_\ast}\ar[rr]^(.45)Y & & \sh(\mathcal{M},J_{can}|_\mathcal{M})\ar[dl]^{k_\ast} \\
& \set. &
}\]

Therefore, $k^\ast$ is the functor which sends a set $I$ to the constant functor $\hat{I}$. Hence $k_!$ is the colimit functor (cf. the argument for (iii)$\Rightarrow$(i) in the proof of Proposition 1.3 in \cite{MR2805745}).

Notice that, by C2.2.7 in \cite{MR1953060}, every sheaf in $\sh(\mathcal{M},J_{can}|_\mathcal{M})$ is of the form $\sh(C,J)(-,X)$ for some $X\in\sh(C,J)$, restricted to $\mathcal{M}$. Hence by Yoneda, 
\[\sh(C,J)(C(-,c),X)\cong Xc,\] 
i.e. one may think of the sheaves in $\mathcal{M}$ as the sheaves in $\sh(C,J)$.

Now, let $F$ be an atomic sheaf in $\sh(C,J)$. Hence $(-)^F$ preserves colimits. On the other hand,
\[
F=\Clim_{(a,x)\in\int F}C(-,a),
\]
where $\int F$ is the category of elements of $F$ (that is, $x\in Fa$). 

Since $\sh(C,J)$ is a McLarty topos, and thus satisfies the Nullstellensatz required for \ref{P:1issmallprojectiveinNS},
\begin{align*}
\sh(C,J)(F,F) &\cong\sh(C,J)(1,F^F)\\
&\cong\sh(C,J)(1,(\Clim_{(a,x)\in\int F} C(-,a))^F)\\
&\cong\sh(C,J)(1,\Clim_{(a,x)\in\int F} C(-,a)^F)\\
&\cong\Clim_{(a,x)\in\int F}\sh(C,J)(1,C(-,a)^F) \\
&\cong\Clim_{(a,x)\in\int F}\sh(C,J)(F,C(-,a)).
\end{align*}
Whence
\[\sh(C,J)(F,\Clim_{(a,x)\in\int F}C(-,a))\cong\Clim_{(a,x)\in\int F}\sh(C,J)(F,C(-,a)).
\]

Now, let $\mu_{(a,x)}:C(-,a)\rightarrow\Clim_{(a,x)\in\int F}C(-,a)$ be the component in $(a,x)$ of the colimit cocone $\mu:C(-,a)\Rightarrow\Clim_{(a,x)\in\int F}C(-,a)$. Let $\mu'_{(a,x)}$ denote the following composite:
\[\xymatrix{
\sh(C,J)(F,C(-,a))\ar[d]^{\sh(C,J)(F,\mu_{(a,x)})} \\
\sh(C,J)(F,F)\ar[d]^\cong \\
\Clim_{(a,x)\in\int F}\sh(C,J)(F,C(-,a))
}\]
This is a colimit in $\set$. Thus this composite is the component in $(a,x)$ of a limit cocone in $\set$ whose colimit object is a quotient of the coproduct 
\[\coprod_{(a,x)\in\int F}\sh(C,J)(F,C(-,a)).\]
Hence for $1_F$, there is some pair $(a,x)$ such that there is a natural transformation $\xi\in\sh(C,J)(F,C(-,a))$ such that $\mu'_{(a,x)}(\xi)=1_F$; equivalently, for some $(a,x)$ there is $\zeta\in\sh(C,J)(F,C(-,a))$ such that $\mu_{(a,x)}\circ\zeta=1_F$. Therefore, $F$ is a retract of $C(-,a)$. Whence, by Lemmas~\ref{L:representabletoitselfisconnected}, \ref{L:retractofcontcont} and since $k_!$ is the colimit functor, $F$ is contractible.
\end{proof}

\begin{obse}
In a Grothendieck topos $\E$ whose canonical geometric morphism $g:\E\rightarrow\set$ is precohesive, a sheaf $F\in\E$ is connected iff $\int F$ is connected:
\begin{align*}
\Clim_{c\in C} Fc &=\Clim_{c\in C}\Clim_{(a,x)\in\int F}C(c,a)\\
&=\Clim_{(a,x)\in\int F}\Clim_{c\in C}C(c,a)\\
&=\Clim_{(a,x)\in\int F}1\\
&=\coprod_{\C(\int F)} 1,
\end{align*}
where $\C(\int F)$ is the set of connected components of $\int F$.
\end{obse}

\section{Examples}\label{S:Examples}

There are three examples considered in this section.  First, an example of a quality type with no nontrivial atomic objects. Next, an example of a necessarily sufficiently cohesive topos with an atomic object with countably infinite points. This example is also used to produce a counterexample to quotients of atomic objects being atomic. And last, an example of toposes (not necessarily of cohesion) with atomic objects $T$ where $2_T$ is isomorphic to $2$. 

\begin{teorema}\label{T:Nonontrivialidempotentatoms}
Let $E$ be the monoid with unit and a single idempotent element. The topos $\set^E$ has no atomic objects except 1.
\end{teorema}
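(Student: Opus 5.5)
The plan is to reduce to retracts of the representable and then rule out the representable by hand. Write $E=\{1,e\}$ with $e^2=e$, so that $\set^E$ is the presheaf topos on the one-object category $E$. Its unique representable is $E$ acting on itself by multiplication. Its underlying set is $\{1,e\}$, and $e$ acts by $e\cdot 1=e\cdot e=e$.

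First I reduce atomic objects to retracts of this representable. If $T$ is atomic, then $(-)^T$ is a left adjoint and hence preserves all colimits. For presheaf toposes this is the small-projectivity used in Section 1.4 of \cite{aM1999}. The standard Cauchy-completion argument then applies: write $T$ as the canonical colimit of representables over $\int T$ and run the same computation as in the proof of \ref{T:ContractGrothen}. This shows that $T$ is a retract of a representable, i.e.\ of $E$. Madanshekaf's theorem as quoted assumes finite products in the base category, which the one-object category $E$ need not have. So I will repeat that colimit argument directly rather than cite it; it only uses that $\sh(C,J)(1,-)$ and $(-)^T$ preserve colimits, and both hold in a presheaf topos.

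Next, retracts of a representable $y(c)$ correspond to idempotents of $\mathrm{End}(y(c))\cong E$. There are exactly two idempotents, $1$ and $e$. The idempotent $1$ splits as $E$ itself. The idempotent $e$ splits through its image $\{e\}$, a one-point $E$-set, which is terminal. Hence an atomic object is either $1$ or $E$. Since $1$ is atomic, it remains to show that $E$ is not.

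To exclude $E$, I compute $(-)^E$ explicitly. By Yoneda, the underlying set of $X^E$ is $\set^E(E\times E,X)$. The $E$-set $E\times E$ has elements $(1,1),(1,e),(e,1),(e,e)$, and $e$ sends each of them to $(e,e)$. So $E\times E$ consists of three free generators glued at a common fixed point. Therefore
\[
X^E\cong\{(x,y,z)\in X^3 : ex=ey=ez\}.
\]
Now take $X=E+E$ and let $q:X\to Q$ be the epimorphism that identifies the two fixed points, so $Q=\{1,1',e\}$ with $e$ acting as the constant map to $e$. Every triple of elements of $Q$ satisfies the condition, so $|Q^E|=27$. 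A triple in $X$ satisfies the condition only if all three entries lie in the same summand, so $|X^E|=2\cdot 8=16$. Thus $q^E$ cannot be surjective. But $(-)^E$ would preserve epimorphisms if it were a left adjoint, so $E$ is not atomic.

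The main obstacle is the first step, making the retract-of-representable reduction rigorous without the finite-products hypothesis. I would carry it out by the explicit colimit computation already used in \ref{T:ContractGrothen}. The remaining counting is routine.
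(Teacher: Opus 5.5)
Your proof is correct, but it follows a genuinely different route from the paper's.

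\textbf{The paper's route.} The paper stays inside its cohesive machinery. Since $\set^E$ is a McLarty quality type, an atomic $(T,t)\neq 1$ is connected by Corollary~\ref{C:tinyimpliesconnected}, so $t$ has a single fixed point. It then combines the explicit exponential of Lemma~\ref{L:exponentialobjectinsetE} with $\Gamma(Y_T)\cong\Gamma(Y)$ from Theorem~\ref{T:YandYsubTsamepoints}. Equating $|\set^E(X^T,Y)|$ with $|\set^E(X,Y_T)|$ for finite $X,Y$ with one fixed point, it finds that $|Y_T|$ would have to depend on $|X|$, which excludes finite $T$. Infinite $T$ are excluded separately, by splitting off the retract $(\Z_2,0)$ and invoking Theorem~\ref{T:retractoftinytiny}.

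\textbf{Your route.} You bypass cohesion entirely. Small-projectivity makes every atomic object a retract of the representable $E$. The two idempotents of $E$ then leave only $1$ and $E$ as candidates. Finally, the quotient $E+E\to Q$ gluing the two fixed points is sent by $(-)^E$ to a map from a $16$-element set to a $27$-element set, so $E$ is not atomic.

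\textbf{Comparison.} Your version is shorter and treats finite and infinite $T$ uniformly. It also avoids the cardinality bookkeeping and pinpoints the single object that must be ruled out. The paper's version instead serves to illustrate its general results: connectedness of atoms, invariance of points under $(-)_T$, and closure under retracts.

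\textbf{One false claim to repair.} In your reduction step you say that $\E(1,-)$ preserves colimits in every presheaf topos. That is false in general. In $\set\times\set$ the terminal object is atomic but is not a retract of a representable, and $\E(1,-)$ does not even preserve binary coproducts there. In $\set^E$ the claim does hold, for either of two reasons:
\begin{enumerate}
\item $1$ is exactly the splitting of the idempotent $e$ on the representable, so $\set^E(1,-)$ is a retract of the colimit-preserving functor $\set^E(E,-)$.
\item $\set^E$ satisfies the Nullstellensatz, so Proposition~\ref{P:1issmallprojectiveinNS} applies.
\end{enumerate}
Once you add this justification, the reduction to retracts of $E$ goes through, and with it the whole argument.
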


\begin{teorema}\label{T:tinywithtwopoints}
There is a McLarty topos $\E$ with an atomic object $A$ with a countably infinite number of points.
\end{teorema}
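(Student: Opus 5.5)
The plan is to realize $\E$ as a presheaf topos $[C^{op},\set]$ on a small category $C$ with finite products, and to take for $A$ a representable functor. By Yetter's observation \cite{MR890028}, recalled before \hyperlink{T:TeorB}{Theorem B}, every representable in such a presheaf topos is atomic. By Yoneda, the points of $C(-,t)$ are $C(1,t)$. So it suffices to choose $C$ so that three things hold: $C$ has finite products, some $t$ has $C(1,t)$ countably infinite, and $[C^{op},\set]$ is McLarty.

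I will take $C$ to be the full subcategory of $\set$ on the sets $\N^n$, $n\ge 0$. It is small. It has finite products, since $\N^n\times\N^m=\N^{n+m}$ and $\N^0=1$. Every object has a global element. Finally, $C(1,\N)\cong\N$, so $A:=C(-,\N)$ is atomic and has countably many points.

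It remains to check that $[C^{op},\set]$ is McLarty, taking the base to be $\set$, which is boolean.

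First, the adjoint string. Set $F^*$ to be the constant presheaf functor, $F_!=\Clim$, $F_*=$ evaluation at $1$ (equivalently, global sections, since $1$ is terminal in $C$), and $F^!$ its right adjoint, given by $F^!(S)(c)=S^{C(1,c)}$.
\begin{itemize}
\item $F^*$ is fully faithful because $C$ is connected: it has a terminal object.
\item $F_!$ preserves finite products because it is a colimit over $C^{op}$. This category has finite coproducts, hence is sifted.
\item The counit $F^*F_*X\to X$ has components $X(1)\to X(c)$, given by restriction along the unique map $c\to 1$. Each component is monic, since composing with a point $1\to c$ gives a retraction.
\end{itemize}

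Second, 2-valuedness. A subterminal is a sieve on $1$. If it contains some $f:c\to 1$, it contains $f\circ x=1_1$ for any point $x:1\to c$. Hence it is all or nothing.

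Third, splitting of supports, via the Nullstellensatz. If $X\neq 0$, pick $u\in X(c)$ and restrict along a point $1\to c$. This gives a global element of $X$. I expect the standard argument that in a 2-valued topos the Nullstellensatz yields split supports to apply; alternatively, I would cite \cite{MR4928709}.

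The main obstacle I expect is verifying the precohesion hypotheses carefully, in particular that $F_!$ preserves finite products. I will handle this via siftedness of categories with finite coproducts. If needed, the direct argument is the one in Lemma~\ref{L:representabletoitselfisconnected}: compare components of $\int(X\times Y)$ with pairs of components, using the projections $a\times b\to a$ and $a\times b\to b$ in $C$.
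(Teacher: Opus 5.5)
Your argument is correct, and it takes a different and much more economical route than the paper's.

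\textbf{The paper's route.} The paper works in $\set^C$ for an ad hoc category $C$. This $C$ is obtained from $\finord$ by deleting the object $1$ and freely adjoining an arrow $p:2\to 0$ together with copairings $[g,h]$, subject to relations. Precohesion (and sufficient cohesion) is obtained by passing to the Karoubi envelope and invoking Propositions 4.1 and 4.5 of \cite{MR3263283}. Atomicity of $C(2,-)$ comes from Yetter's criterion, which only needs a functor $(-)+2$ rather than all binary coproducts. The points are then counted as the arrows $2\to 0$.

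\textbf{Your route.} You take the full subcategory of $\set$ on the $\N^n$. It has finite products and every object has a point, so each McLarty axiom can be checked by hand:
\begin{itemize}
\item $\Delta$ is fully faithful because $C$ has a terminal object.
\item $\Clim$ preserves finite products because $C^{op}$ has finite coproducts and is therefore sifted.
\item The counit components $X(1)\to X(c)$ are split by any point of $c$.
\item Subterminals are sieves on $1$, hence trivial.
\item The Nullstellensatz follows by restricting along a point.
\end{itemize}
Your hedge about split supports is unnecessary. In a 2-valued topos the support of any $X\neq 0$ is $1$, so a global element of $X$ is exactly a splitting. The count of points, $C(1,\N)\cong\N$, is then transparent.

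\textbf{Comparison.} Your route gives a self-contained example, with no presentation by generators and relations and no external precohesion criteria. The paper's route pays off through reuse: the same category is needed for Theorem~\ref{T:Nonatomicquotient}. There, the adjoined arrows $n\to 0$ not in $\finord$ are exactly what make the quotient $Q$ of $C(2,-)$ fail to be atomic.
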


\begin{teorema}\label{T:twosubTisotwo}
Let $C$ be a small cartesian closed category. Then for every $t\in C$ the representable functor $C(-,t)$ is a atomic object of $[C^{op},\set]$, and $2_{C(-,t)}\cong 2$, where $2=1+1$ in $[C^{op},\set]$. 
\end{teorema}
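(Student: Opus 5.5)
Two things have to be shown: that $C(-,t)$ is atomic, and that $2_{C(-,t)}\cong 2$.

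\emph{Atomicity.} Write $y t = C(-,t)$ and work in $\widehat{C}=[C^{op},\set]$. First compute $X^{yt}$ for an arbitrary presheaf $X$. By Yoneda and the fact that $y$ preserves products,
\[X^{yt}(a)\cong\widehat{C}(ya\times yt,X)\cong\widehat{C}(y(a\times t),X)\cong X(a\times t).\]
So $(-)^{yt}$ is precomposition with the functor $(-)\times t:C\to C$. Precomposition with any functor $G$ between small categories has a right adjoint, namely right Kan extension along $G$. Hence $(-)^{yt}\dashv \mathrm{Ran}_{(-)\times t}$, and $yt$ is atomic.

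When $C$ is cartesian closed the adjoint can be made explicit. There we have $(-)\times t\dashv(-)^t$, and precomposition with a left adjoint is right adjoint to precomposition with its right adjoint. This gives
\[X_{yt}(b)=X(b^t).\]
To check it, note that a natural transformation $X\Rightarrow Y\circ(-)^t$ corresponds to one $X\circ(-\times t)\Rightarrow Y$. This uses the unit $a\to(a\times t)^t$ and the counit $b^t\times t\to b$, and the triangle identities are inherited. This is the only place cartesian closedness is used, and it is what makes the computation of $2_{yt}$ tractable.

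\emph{Computing $2_{yt}$.} By the formula above,
\[2_{yt}(b)=2(b^t),\]
where $2=1+1$ is computed pointwise. So $2(c)=\{0,1\}$ for every $c$, and $2$ is the constant presheaf on a two-element set. Then $2_{yt}=2\circ(-)^t$ is again constant with value $\{0,1\}$, and every restriction map is the identity. Hence $2_{yt}\cong 2$.

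One can also check that this isomorphism is the canonical map $2_0$ when $t$ has a point, though the statement does not require it. The main obstacle is justifying the formula $X_{yt}(b)=X(b^t)$ cleanly, in particular naturality in both variables. I would do this by the standard lemma that, for adjoint functors $L\dashv R$ between small categories, precomposition satisfies $L^*\dashv R^*$, the unit and counit being whiskerings of those of $L\dashv R$.
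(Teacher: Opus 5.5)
Your proof is correct and is essentially the paper's argument. Both rest on $C(b\times t,a)\cong C(b,a^t)$ to get $2_{C(-,t)}(b)\cong 2(b^t)$. The paper works pointwise, via $X_{C(-,t)}(a)\cong[C^{op},\set](C(-,a)^{C(-,t)},X)$ and Yoneda, and cites Yetter for atomicity. You work globally, via the induced adjunction on presheaves, which also makes it visible that the restriction maps of $2_{C(-,t)}$ are identities.

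One slip to fix. For presheaves, an adjunction $L\dashv R$ in $C$ induces $L^*\dashv R^*$, as in your explicit check and your last paragraph. The sentence ``precomposition with a left adjoint is right adjoint to precomposition with its right adjoint'' is the covariant version. Read literally with $L=(-)\times t$, it gives the adjunction in the wrong direction.
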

\begin{proof}[Proof of \ref{T:twosubTisotwo}]
By Proposition 1.1 in \cite{MR890028}, since $C^{op}$ has coproducts, $C(-,t)$ is atomic.

We have that
\begin{align*}
C(-,a)^{C(-,t)}(b) &=[C^{op},\set](C(-,b)\times C(-,t),C(-,a))\\
&\cong[C^{op},\set](C(-,b\times t),C(-,a))\\
&\cong C(b\times t,a)\\
&\cong C(b,a^t).
\end{align*}
Hence, by the proof of Lemma 1.4.3 in \cite{aM1999},
\begin{align*}
2_{C(-,t)}(a) &=[C^{op},\set](C(-,a)^{C(-,t)},2)\\
&\cong[C^{op},\set](C(-,a^t),2)\\
&\cong 2(a^t)\\
&=2.
\end{align*}
\end{proof}

\begin{lema}\label{L:exponentialobjectinsetE}
An exponential object $(\beta,B)^{(\alpha,A)}$ in $\set^E$ is the pair $(\beta^\alpha,B^\alpha)$, where $B^\alpha:=\set^E((0\times\alpha,\Z_2\times A),(\beta,B))$ and $\beta^\alpha:=(f\mapsto f\circ 0\times 1_A)$, and the evaluation morphism is $\ev:(\beta^\alpha,B^\alpha)\times(\alpha,A)\rightarrow(\beta,B)$ is defined as
\[\ev(f,a):=f(1,a).\]
\end{lema}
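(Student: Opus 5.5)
The plan is to verify the universal property of the exponential directly, after making the encoding explicit. An object of $\set^E$ is a pair $(\beta,B)$ with $\beta:B\to B$ idempotent, giving the action of the non-unit element $e$. I read $\Z_2$ as the underlying set $\{0,1\}$ of $E$, with $1$ the unit and $0=e$, and $0:\Z_2\to\Z_2$ as multiplication by $e$. So $(0,\Z_2)$ is the representable $E$ acting on itself, and $(0\times\alpha,\Z_2\times A)$ is its product with $(\alpha,A)$. The proposed formula is then the usual presheaf exponential $B^A(\ast)=\mathrm{Hom}(y(\ast)\times A,B)$, with $e$ acting by precomposition with $y(e)\times A$. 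I will nevertheless check it by hand, in four steps.

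First I check well-definedness.
\begin{itemize}
\item The map $0\times 1_A$ is itself an $E$-morphism: both composites with $0\times\alpha$ send $(n,a)$ to $(0,\alpha a)$.
\item Hence $\beta^\alpha(f)=f\circ(0\times 1_A)$ lies in $B^\alpha$.
\item $\beta^\alpha$ is idempotent, since $0\times 1_A$ is.
\item $\ev$ is equivariant. Indeed $\ev(\beta^\alpha f,\alpha a)=f(0,\alpha a)$, while $\beta(f(1,a))=f((0\times\alpha)(1,a))=f(0,\alpha a)$, using that $f$ is equivariant.
\end{itemize}

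Second, I construct transposes. Given $g:(\gamma,X)\times(\alpha,A)\to(\beta,B)$, write $n\cdot x$ for $x$ when $n=1$ and for $\gamma x$ when $n=0$, and set $\hat g(x)=\big((n,a)\mapsto g(n\cdot x,a)\big)$.
\begin{itemize}
\item Each $\hat g(x)$ is equivariant: $g(0\cdot x,\alpha a)=\beta g(n\cdot x,a)$, because $\gamma(n\cdot x)=\gamma x$ and $g$ is equivariant.
\item $\hat g$ is equivariant: $\hat g(\gamma x)(n,a)=g(\gamma x,a)=\hat g(x)(0,a)=(\beta^\alpha\hat g(x))(n,a)$, using idempotence of $\gamma$.
\item Clearly $\ev\circ(\hat g\times 1)=g$, since $\ev(\hat g(x),a)=g(x,a)$.
\end{itemize}

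Third, and this is the step I expect to need the most care, I prove uniqueness. Suppose $h:X\to B^\alpha$ is equivariant with $\ev\circ(h\times 1)=g$. Then $h(x)(1,a)=g(x,a)$ is forced. The value $h(x)(0,a)$ is recovered through equivariance of $h$: $h(x)(0,a)=(\beta^\alpha h(x))(1,a)=h(\gamma x)(1,a)=g(\gamma x,a)$. Hence $h=\hat g$. The obstacle here is purely bookkeeping: keeping track of which idempotent acts where, and making sure the value at $(0,a)$ is not left undetermined.

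Fourth, naturality of the resulting bijection $\set^E(X\times A,B)\cong\set^E(X,B^A)$ in $X$ is immediate from the explicit formula for $\hat g$. This completes the identification of the exponential.
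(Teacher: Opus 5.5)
Your proposal is correct and follows essentially the same route as the paper: identify the underlying set via Yoneda as the $E$-maps $(0\times\alpha,\Z_2\times A)\to(\beta,B)$, check that $\ev(f,a)=f(1,a)$ is equivariant, and transpose via $\hat g(x)(n,a)=g(n\cdot x,a)$, which is exactly the paper's $g(\varsigma^{1-n}c,a)$. Your explicit uniqueness argument, which recovers $h(x)(0,a)=g(\gamma x,a)$ from the equivariance of $h$, spells out a step the paper only asserts.
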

\begin{proof}
Let $P,Q$ be functors $E\rightarrow\set$. If $Q^P$ exists, then, as it is a functor $E\rightarrow\set$, by Yoneda,
\begin{align}
Q^P(\bullet) &\cong\Nat(E(\bullet,-),Q^P)\notag\\
&=\set^E(E(\bullet,-),Q^P)\notag\\
&\cong\set^E(E(\bullet,-)\times P,Q)\notag\\
&=\Nat(E(\bullet,-)\times P,Q)\notag.
\end{align}
So if $Q=(\beta,B)$ and $P=(\alpha,A)$, then $Q^P(\bullet)$ must be the set of morphisms $0\times\alpha\rightarrow\beta$ in $\set^E$, i. e. the functions $f$ in $\set$ making the following diagram commute:
\[\xymatrix{
\Z_2\times A\ar[r]^(.6)f\ar[d]_{0\times\alpha} & B\ar[d]^\beta \\
\Z_2\times A\ar[r]_(.6)f & B,
}\]
where 0 is the constant 0.

Now, for $e\in E$, one must have $Q^P(e)$ equal to
\[\xymatrix{
\Nat(E(\bullet,-)\times P,Q)\ar[rrr]^{\Nat(E(e,-)\times P,Q)} & & & \Nat(E(\bullet,-)\times P,Q).
}\]
So $Q^P(e)$ must be $f\mapsto f\circ 0\times 1_A$.

To see what the evaluation morphism is, define $\ev:B^\alpha\times A\rightarrow B$ as
\[\ev(f,a):=f(1,a).\]
Then, $\ev$ is a morphism $(\beta^\alpha,B^\alpha)\rightarrow(\beta,B)$. Indeed,
\[\xymatrix{
B^\alpha\times A\ar[r]^(.65)\ev\ar[d]_{\beta^\alpha\times\alpha} & B\ar[d]^\beta &
(f,a)\ar@{|->}[r]\ar@{|->}[d] & f(1,a)\ar@{|->}[d] \\
B^\alpha\times A\ar[r]_(.65)\ev & B &
(f\circ 0\times 1_A,\alpha a)\ar@{|->}[r] & f(0,\alpha a)
}\]
commutes since
\[\xymatrix{
\Z_2\times A\ar[r]^(.6)f\ar[d]_{0\times\alpha} & B\ar[d]^\beta \\
\Z_2\times A\ar[r]_(.6)f & B
}\]
commutes, so $\beta f(1,a)=f(0,\alpha a)$.

Now, let $g:(\varsigma,C)\times(\alpha,A)\rightarrow(\beta,B)$ be a morphism in $\set^E$, and define $\hat{g}:C\rightarrow B^\alpha$ as
\[\hat{g}c(n,a):=g(\varsigma^{1-n}c,a).\]
Then, $\hat{g}$ is a morphism $(\varsigma,C)\rightarrow(\beta^\alpha,B^\alpha)$ such that it is the unique function making the following diagram commute:
\[\xymatrix{
C\times A\ar[dr]^g\ar[d]_{\hat{g}\times 1} & \\
B^\alpha\times A\ar[r]_(.6)\ev & B.
}\]
\end{proof}

\begin{proof}[Proof of \ref{T:Nonontrivialidempotentatoms}]
Let $(X,x),(Y,y),(T,t)\in\set^E$. Suppose that $X,Y,T\neq\emptyset$ are finite and that $\dso(X,x)=1=\dso(Y,y)$. Suppose $(T,t)$ is an atomic object in $\set^E$ different from 1. Hence, by \ref{C:tinyimpliesconnected}, $\Pi(T,t)=1$, and since $\set^E$ is a quality type, $\dso(T,t)=1$; that is, $t$ has a single fixed point. So, by \ref{L:exponentialobjectinsetE},
\[
|X^t|=|X|^{2|T|-1}.
\]

Now, there is a bijective correspondence between the fixed points $h$ of $x^t$ and the arrows $k:(T,t)\rightarrow(X,x)$. So
\[
|\Fix(x^t)|=|X|^{|T|-1},
\]
where $\Fix$ assigns the fixed points of an idempotent function. Therefore
\begin{align*}
|\set^E((X,x)^{(T,t)},(Y,y))| &=|Y|^{|X^t|-|\Fix(x^t)|}\\
&=|Y|^{|X|^{|T|-1}\left(|X|^{|T|}-1\right)}.
\end{align*}

Say $(Y,y)_{(T,t)}=(Y_t,y_t)$. So, by \ref{T:YandYsubTsamepoints}, $\dso(Y,y)\cong\dso(Y_t,y_t)$; that is, $y_t$ has a single fixed point. Hence
\[
|\set^E((X,x),(Y_t,y_t))|=|Y_t|^{|X|-1}.
\]
All in all:
\begin{align*}
|X^t| &=|X|^{2|T|-1},\\
|\set^E((X,x)^{(T,t)},(Y,y))| &=|Y|^{|X|^{|T|-1}\left(|X|^{|T|}-1\right)},\\
|\set^E((X,x),(Y_t,y_t))| &=|Y_t|^{|X|-1}.
\end{align*}
Whence
\[
|Y_t|=|Y|^{|X|^{|T|-1}\left(|X|^{|T|-1}+|X|^{|T|-2}+\cdots+1\right)}.
\]
That is, the cardinality of $Y_t$ would (also) depend on that of $X$, which is a contradiction: it should depend on that of $Y$ and $T$ only. Therefore $\set^E$ has no atomic objects whose underlying set is finite.

Now suppose that $(T,t)$ is an atomic object of $\set^E$ with infinite underlying set. Consider the characteristic function $\chi$ of $T\setminus\{p\}$, where $p$ is the fixed point of $t$. Then $\chi:(T,t)\rightarrow(\Z_2,0)$. Define $\iota:\Z_2\hookrightarrow T$ as follows: $\iota(0):=p$ and $\iota(1):=q$, where $q$ is any other point than $p$ in $T$. Then $\iota\circ\chi=1$ and $\iota:(\Z_2,0)\rightarrow(T,t)$ in $\set^E$. Therefore $(\Z_2,0)$ is a retract of $(T,t)$. Hence, by \ref{T:retractoftinytiny}, $(\Z_2,0)$ is atomic, which is a contradiction, since $(\Z_2,0)$ has a finite underlying set.
\end{proof}

\begin{lema}\label{L:representableareindecomposable}
Let $C$ be a small category. For every $c\in C$, the representable functor $C(c,-):C\rightarrow\set$ is indecomposable. That is, its complemented subobjects are just the empty functor and itself.
\end{lema}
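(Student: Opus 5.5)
We argue directly from the definitions in the functor category $\set^C$, with no need for the topos machinery developed above. Recall that in $\set^C$ finite products, coproducts, monomorphisms and pullbacks are all computed pointwise. So a complemented subobject of $P:=C(c,-)$ is a pair of subfunctors $U,V\subseteq P$ with $U(d)\cap V(d)=\emptyset$ and $U(d)\cup V(d)=P(d)$ for every $d\in C$. Equivalently, it is a decomposition $P\cong U+V$, or an arrow $\chi:P\to 1+1$ whose fibres are $U$ and $V$.

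The key step is Yoneda. An arrow $\chi:C(c,-)\to 2$, where $2=1+1$ is the constant functor with value $\{0,1\}$, corresponds to an element $\chi_c(1_c)\in 2(c)=\{0,1\}$. Naturality then determines $\chi$ completely:
\[
\chi_d(u)=\chi_d(C(c,u)(1_c))=2(u)(\chi_c(1_c))=\chi_c(1_c),
\]
because $2(u)$ is the identity. Hence $\chi$ is constant, with value either $0$ or $1$. The complemented subobject classified by $\chi$ is therefore either the empty functor or all of $C(c,-)$.

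To avoid relying on the identification of complemented subobjects with maps into $1+1$, we can argue elementwise instead. Suppose $U\subseteq C(c,-)$ is complemented by $V$. Then $1_c$ lies in exactly one of $U(c)$ and $V(c)$; say $1_c\in U(c)$. For any $u:c\to d$ we have $u=C(c,u)(1_c)\in U(d)$, since $U$ is a subfunctor. So $U=C(c,-)$, and disjointness forces $V(d)=\emptyset$ for all $d$, i.e. $V$ is the empty functor. The case $1_c\in V(c)$ is symmetric.

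The only point needing care is the reduction to the pointwise description of complemented subobjects (and the observation that $C(c,-)$ is nonempty, which is witnessed by $1_c$). This reduction is routine because colimits and intersections in $\set^C$ are computed objectwise. Nothing else is difficult. The same argument applied to $C^{op}$ gives the statement for representables $C(-,c)$ as well, which is the form invoked in the proof of \ref{T:ContractGrothen}.
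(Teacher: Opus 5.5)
Your proof is correct. The elementwise version is close to the paper's: both hinge on $1_c$ generating $C(c,-)$, so any subfunctor containing $1_c$ is everything. The paper fixes a proper nonempty $Q$ and uses the explicit pseudocomplement $(\neg Q)d=\{g\in C(c,d)\mid f\circ g\notin Qe \text{ for all } f:d\to e\}$ to show that $1_c$ lies in neither $Qc$ nor $(\neg Q)c$. You instead take an arbitrary complement $V$ and use only that unions and intersections of subfunctors are pointwise, so the Heyting negation never enters. Your Yoneda argument is a genuinely different and slicker route: once complemented subobjects are identified with maps into $1+1$, the lemma reduces to $(1+1)(c)$ being a two-element set. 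This makes clear that the only real input is that coproducts, hence $1+1$, are computed pointwise.

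One caution about your last sentence. The $C^{op}$ version is right in $\set^{C^{op}}$. In the proof of \ref{T:ContractGrothen}, however, the representables are regarded as objects of $\sh(C,J)$. There, joins of subobjects are sheafified unions and $1+1$ is a sheafified constant presheaf. So neither of your arguments (nor the lemma as stated) transfers there without further input. For instance, in sheaves on a disconnected space $X$, the representable of $X$ is the terminal sheaf, and it does decompose.
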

\begin{proof}
Let $Q$ be a subfunctor of $C(c,-)$ such that $Q\neq\emptyset$ and $Q\neq C(c,-)$. Then there is an $f\in Qd$ for some $d\in C$ since $Q\neq\emptyset$, and $1_c\notin Qc$ since $Q\neq C(c,-)$.

Now, remember
\[
(\neg Q)d=\{g\in C(c,d)\mid\text{ for all }f:d\rightarrow e,\, f\circ g\notin Qe\};
\]
therefore $1_c\notin (\neg Q)c$ since $f\circ 1_c=f\in Qd$. Whence $Q$ is not complemented. Hence the only complemented subfunctors of $C(c,-)$ are $\emptyset$ and $C(c,-)$.
\end{proof}

\begin{obse}\label{R:K(C)inheritspropertiesfromC}
Recall that given a category $C$, its Karoubi envelope $K(C)$ is contructed as follows: the objects of $K(C)$ are all the idempotent arrows in $C$ and an arrow $f:e\rightarrow e'$ in $K(C)$ is a commutative square in $C$
\[\xymatrix{
a\ar[r]^f\ar[dr]^f\ar[d]_e & b\ar[d]^{e'} \\
a\ar[r]_f & b.
}\]
Hence the identities in $K(C)$ are the arrows
\[\xymatrix{
a\ar[r]^e\ar[dr]^e\ar[d]_e & a\ar[d]^e \\
a\ar[r]_e & a.
}\]

If $C$ has an initial object 0, then $K(C)$ has $1_0$ as initial object. Indeed, let $e:a\rightarrow a$ be any object in $K(C)$. Then the following diagram commutes:
\[\xymatrix{
0\ar[r]^{!}\ar[dr]^{!}\ar[d]_{1_0} & a\ar[d]^e \\
0\ar[r]_{!} & a,
}\]
and $!$ is the only arrow making the diagram commute. Moreover, if every object of $C$ has a copoint, then every object in $K(C)$ has a copoint. Indeed, let $(a,e)\in K(C)$ and let $p:a\rightarrow 0$ be a copoint of $a$; hence the following diagram commutes:
\[\xymatrix{
a\ar[r]^e\ar[d]_e & a\ar[r]^p & 0\ar[d]^1 \\
a\ar[r]_e & a\ar[r]_p & 0.
}\]
That is, $(a,e)$ has the copoint $pf:(a,e)\rightarrow(0,1_0)$.

If $C$ has the coproduct $a+b$, then $K(C)$ has the coproduct $1_{a+b}$ with injections
\[\xymatrix{
1_a\ar[r]^(.45){i_a} & 1_{a+b} & 1_b\ar[l]_(.43){i_b}.
}\]
Indeed, let $f:(a,1_a)\rightarrow(c,e)$ and $g:(b,1_b)\rightarrow(c,e)$ be arrows in $K(C)$. Hence the following diagram commutes:
\[\xymatrix{
& a\ar[rr]^{i_a}\ar[dl]_{1_a}\ar@/_1pc/[ddrr]|(.42)\hole|(.52)\hole^(.25)f & & a+b\ar[dl]_{1_{a+b}}\ar[dd]|(.52)\hole^(.35){[f,g]} & & b\ar[ll]_{i_b}\ar[dl]_{1_b}\ar@/^1.5pc/[ddll]|(.76)\hole^g \\
a\ar[rr]_{i_a}\ar@/_1pc/[ddrr]_f & & a+b\ar[dd]^{[f,g]} & & b\ar[ll]^(.6){i_b}\ar@/^1.5pc/[ddll]^g & \\
& & & c\ar[dl]_e & & \\
& & c, & & &
}\]
and $[f,g]:a+b\rightarrow c$ is the only arrow in $C$ making the above diagram commute. Therefore $[f,g]:(a+b,1_{a+b})\rightarrow(c,e)$ is the only arrow making the following diagram commute:
\[\xymatrix{
(a,1_a)\ar[r]^(.4){i_a}\ar[dr]_f & (a+b,1_{a+b})\ar[d]^{[f,g]} & (b,1_b)\ar[l]_(.37){i_b}\ar[dl]^g \\
& (c,e). &
}\]

If $e:a\rightarrow a$ is a split idempotent arrow in $C$ with splitting
\[\xymatrix{
a\ar[r]^r & b\ar[r]^s & a\ar@{}|{=}[r] & a\ar[r]^e & a \\
b\ar[r]^s & a\ar[r]^r & b\ar@{}|{=}[r] & b\ar[r]^{1_b} & b,
}\]
then the idempotent arrow $e:(a,1_a)\rightarrow(a,1_a)$ in $K(C)$ has the splitting $r:(a,1_a)\rightarrow(b,1_b)$, $s:(b,1_b)\rightarrow(a,1_a)$.
\end{obse}

\begin{proof}[Proof of \ref{T:tinywithtwopoints}]
Consider the underlying graph of $\finord$, the small category of all finite ordinals. Remove its terminal object and all the arrows whose domain or codomain is 1. That is, consider the vertex-deleted subgraph $\finord - 1$ of $\finord$ (we are denoting the underlying graph of $\finord$ with the same name).

Adjoin to $\finord - 1$ the following arrows (paths) with the following relations. One arrow $p:2\rightarrow 0$ with $p!=1_0$. Now, let $m\geq 2$ and let $i_m$ denote the inclusion function $2\hookrightarrow 2+m$ and $j_m$ the function embedding $m$ into $2+m$ as follows: 0 goes to 2, 1 to 3,..., $m$ to $m+1$.

So for $g:2\rightarrow k$ and $h:m\rightarrow k$ arrows already adjoined, adjoin the following arrow in correspondence with the following diagram if $[g,h]$ is not already adjoined (for example, if $g,h$ are in $\finord - 1$, $[g,h]$ is already adjoined since it is in $\finord -1$):
\[\xymatrix{
2\ar[r]^(.4){i_m}\ar@/_.7pc/[dr]_g & 2+m\ar[d]^{[g,h]} & m\ar[l]_(.35){j_m}\ar@/^.7pc/[dl]^h \\
& k &
}\]
with the obvious relations making the triangles commute, the relation
\[
[g,h]!=!,
\]
and for every arrow $r:k\rightarrow n$ already adjoined the relation $r\circ[g,h]=[rg,rh]$ corresponding to the following diagram:
\[\xymatrix{
2\ar[r]^(.4){i_m}\ar@/_.7pc/[dr]^g\ar@/_.7pc/[ddr]_{r\circ g} & 2+m\ar[d]^(.45){[g,h]} & m\ar[l]_(.35){j_m}\ar@/^.7pc/[dl]_h\ar@/^.7pc/[ddl]^{r\circ h} \\
& k\ar[d]^r & \\
& n. & 
}\]

For example, if $f_1,f_2:2\rightarrow 2$ are arrows in $\finord$, one gets the arrows $[pf_1,pf_2]:4\rightarrow 0$ and $[pf_2,pf_1]:4\rightarrow 0$ and the relations $[pf_1,pf_2]!=1_0=[pf_2,pf_1]!$, $p\circ[f_1,f_2]=[pf_1,pf_2]$.

Let $G$ be the graph obtained by adjoining to $\finord -1$ the above arrows, $S$ the set of the above relations, $T$ the set of the relations already in the category $\finord-1$, and $R:=S\cup T$. Let $D$ be the free category generated by $G$. Hence the category $C:=D/R$ with generators $G$ and relations $R$ (see Sections II.7 and II.8 in \citet{MR0354798}) has an initial object and all of its objects have a copoint. Moreover, all of its idempotents split except the constant functions in $\finord$. However, by \ref{R:K(C)inheritspropertiesfromC}, $K(C)$ has an initial object, $(2,1_2)$ is additionable, and all of its objects has a copoint. Now, $\set^C\simeq\set^{K(C)}$. Therefore, by Proposition 4.1 in \citet{MR3263283}, the canonical $f:\set^C\rightarrow\set$ is precohesive. By Proposition 4.5 in \citet{MR3263283}, $f:\set^C\rightarrow\set$ is sufficiently cohesive.

Notice that $C$ is small since the cardinality of the set of arrows in $C$ is that of $\N$ (see \ref{R:cardinalityofC} below).

There is a functor $(-)+2:C\rightarrow C$ giving coproducts with 2 for all objects in $C$. So, by Propsition 1.1 in \citet{MR890028}, $C(2,-):C\rightarrow\set$ is an atomic object in $\set^C$. Moreover, the copoints $2\rightarrow n\rightarrow 0$ in $C$ yield a countably infinite number of points $C(0,-)\rightarrow C(2,-)$ of $C(2,-)$ since $C(0,-)$ is terminal in $\set^C$.

Finally, since $f:\set^C\rightarrow\set$ is precohesive, $f$ satisfies NS (see Motivation in \citet{MR4922622}). Therefore, by Proposition 1.1 and Theorem C in \citet{MR4928709}, and \ref{L:representableareindecomposable}, $C(2,-)$ is connected.
\end{proof}

\begin{obse}\label{R:cardinalityofC}
Let $C$ be the category constructed in \ref{T:tinywithtwopoints}. Here are some examples of arrows constructed in the inductive procedure in its proof.

The only object $n\in C$ with no indecomposable arrow $n\rightarrow 0$ is 3 since 1 is not in $C$. The only object $n\in C$ with just one indemposable arrow $n\rightarrow 0$ is 2, namely, $p$. The first objet $n\in C$ with lots of indecomposable arrows $n\rightarrow 0$ is 4. The number of arrows $2\rightarrow 2\rightarrow 0$ is $2^2$. So if one considers the following diagram with $g$ and $h$ one of those arrows:
\[\xymatrix{
2\ar[r]^(.4){i_2}\ar@/_.7pc/[dr]_g & 2+2\ar[d]^{[g,h]} & 2\ar[l]_(.35){j_2}\ar@/^.7pc/[dl]^h \\
& 0, &
}\]
we get $2^2\cdot 2^2$ new $4\rightarrow 0$ arrows. With these new arrows, one gets $4^2\cdot 2^4$ new $2\rightarrow 4\rightarrow 0$ arrows if the arrows $2\rightarrow 4$ are in $\finord$. So if one considers the previous diagram with $g$ and $h$ one of these new $2\rightarrow 4\rightarrow 0$ arrows, then one gets $4^2\cdot 2^4\cdot 2^4\cdot 4^2$ new $4\rightarrow 0$ arrows, and so on.

We can do this with $5,6,7$, etc. It is clear that one gets a countably infinite number of arrows in each case.

There are arrows $n\rightarrow m$  with $m\neq 0$ not in $\finord$ not passing through 0. That occurs for $n\geq 4$. For example, let $f:2\rightarrow m$ be an arrow in $\finord$ with $m\neq 0$. Then $[!p,f]:4\rightarrow m$ does not pass through 0 and is not in $\finord$. Neither $[f,[!p,f]]:6\rightarrow m$ nor $[!p,[!p,f]]:6\rightarrow m$.
\end{obse}
\begin{teorema}\label{T:Nonatomicquotient}
Let $C$ be the category in \ref{T:tinywithtwopoints}. Let $Q:C\rightarrow\set$ be defined as follows: $Qn:=2$ for every $n\in C$ and
\[Qh:=\begin{cases}
\bar{0} &\text{if $h$ is an arrow $n\rightarrow 0\rightarrow m$ with $n\rightarrow 0$ not in $\finord$,}\\
1_2 &\text{otherwise.}
\end{cases}\]
(Here $\bar{0}$ is the constant function 0). Then, the functor $Q$ is a quotient of $C(2,-)$ that is not atomic in $\set^C$.
\end{teorema}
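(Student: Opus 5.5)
The plan has three steps: show that $Q$ is a functor $C\to\set$, exhibit an epimorphism $C(2,-)\twoheadrightarrow Q$, and show that $Q$ is not atomic by proving it is not a retract of any representable.

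\emph{Functoriality.} Write $N$ for the class of arrows of $C$ that factor as $n\to 0\to m$ with $n\to 0$ not in $\finord$. I will check that $N$ is a two-sided ideal: pre- or post-composing such a factorization with any arrow again gives such a factorization. Here one uses that every arrow into $0$ from a nonzero object is one of the adjoined arrows, and the relations $[g,h]\circ !=!$ and $p\circ !=1_0$. Identities are not in $N$, because $0$ is initial and $1_0$ lies in $\finord$. Hence $Q$ sends identities to $1_2$ and composites to composites, using $\bar 0\circ\bar 0=\bar 0\circ 1_2=1_2\circ\bar 0=\bar 0$.

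\emph{Quotient.} By Yoneda, a transformation $C(2,-)\to Q$ is $h\mapsto Qh(x)$ for some $x\in Q2$. I take $x=1$. At a stage $n\neq 0$, some $\finord$ arrow $2\to n$ gives $1$ and, for $n\neq 3$, some arrow in $N$ gives $0$. The delicate stages are $n=0$, where every arrow $2\to 0$ lies in $N$ and so only $0$ is reached, and $n=3$. This surjectivity check is where I expect the main obstacle. If pointwise surjectivity fails there, I would first look for another sense of quotient, or another generating family (for instance $C(2,-)+C(0,-)$ with the second summand hitting $1\in Q0$), before revisiting the definition of $Q$.

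\emph{Non-atomicity.} Suppose $Q$ were atomic, so $(-)^Q$ preserves colimits. By Proposition~\ref{P:1issmallprojectiveinNS}, $\set^C(1,-)$ preserves colimits, since $\set^C$ is McLarty by the proof of \ref{T:tinywithtwopoints}. Then $\set^C(Q,-)\cong\set^C(1,(-)^Q)$ preserves colimits. Writing $Q$ as the colimit of representables over its category of elements and arguing exactly as in the proof of \ref{T:ContractGrothen}, $Q$ is a retract of some $C(n,-)$, with $s:Q\rightarrowtail C(n,-)$ and $r\circ s=1$. Put $v:=s_0(1):n\to 0$ and $u:=s_2(1):n\to 2$. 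Let $!:0\to 2$ be the $\finord$ arrow, so $Q(!)=1_2$. Naturality along $!$ gives $u={!}\circ v$. Naturality along $p$, where $Qp=\bar 0$, gives $s_0(0)=p\circ u=p\circ{!}\circ v=v=s_0(1)$. This contradicts the injectivity of the split mono $s_0$, so $Q$ is not atomic.
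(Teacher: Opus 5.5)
\paragraph{A gap in the functoriality step.} Your functoriality step is false, and with it the statement as written: $Q$ is not a functor, so it is not an object of $\set^C$ at all. $N$ is not closed under precomposition with arrows out of $0$. Indeed $p=1_0\circ p$ lies in $N$, yet $p\circ{!}=1_0\notin N$; you record both facts yourself. Hence $Q(p)\circ Q({!})=\bar 0\neq 1_2=Q(1_0)$. No other choice of $Q({!})$ can help, because $Q(p)Q({!})=1_{Q0}$ with $Q(p)$ constant would force $|Q0|\le 1$.

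Sending composites to composites also needs the converse implication $h,k\notin N\Rightarrow hk\notin N$. You do not address it, and it fails too. We have $[{!}p,1_2]\circ i_2={!}\circ p\in N$, while neither $i_2$ nor $[{!}p,1_2]$ lies in $N$. Otherwise $1_2=[1_2,1_2]\circ i_2$ or $1_2=[{!}p,1_2]\circ j_2$ would factor through the initial object $0$, giving $2\cong 0$ (compare Remark~\ref{R:cardinalityofC}). So $Q([{!}p,1_2])\circ Q(i_2)=1_2\neq\bar 0=Q({!}\circ p)$. The paper never checks functoriality of $Q$ either.

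\paragraph{Consequences for the other two steps.} The obstacle you flagged in the quotient step is genuine. Every arrow $2\to 0$ lies in $N$, so any family $\alpha$ natural along all arrows satisfies $\alpha_0(h)=Q(h)(\alpha_2(1_2))=0$. The element $1\in Q0$ is therefore never reached. Stage $3$ is harmless, by contrast, since ${!}\circ p:2\to 0\to 3$ gives $0$. The paper's Lemma~\ref{L:epicfromrep2toQ} uses exactly your $\alpha$ (with $x=1$) and asserts it is epic without examining stage $0$; that claim fails for the same reason.

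Your non-atomicity argument is an artifact of the defect rather than a proof. Composing the naturality conditions along ${!}$ and $p$ gives $s_0\circ\bar 0=s_0$. No injective $s_0$ into any functor whatsoever can satisfy this, so you never actually use that the codomain is representable or that $Q$ is atomic. The paper's route is different: it deduces $Q^Q\cong Q$ from $C(k,-)^Q\cong C(k,-)$ and contradicts $|Q^Q(n)|=2^{\N}$. But that route equally presupposes $Q\in\set^C$.

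So the statement cannot be proved as given; the definition of $Q$ must be changed first. Your retract argument hinges precisely on $Q(p)Q({!})\neq Q(1_0)$, so it would not survive such a change.
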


\begin{lema}\label{L:epicfromrep2toQ}
Let $C$ be the category in \ref{T:tinywithtwopoints}. Let $Q:C\rightarrow\set$ be as in \ref{T:Nonatomicquotient}. Then, there is an epic arrow $\alpha:C(2,-)\Rightarrow Q$ in $\set^C$.
\end{lema}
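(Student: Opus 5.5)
We will construct $\alpha$ by the Yoneda lemma: a natural transformation $C(2,-)\Rightarrow Q$ corresponds to an element of $Q2=2$. Choosing the element $1\in Q2$ (the value that is not the constant $\bar 0$) gives $\alpha_n(h)=Qh(1)$ for $h:2\to n$. Concretely, $\alpha_n(h)=0$ if $h$ factors as $2\to 0\to n$ with the first arrow $2\to 0$ not in $\finord$, and $\alpha_n(h)=1$ otherwise. Before this, we will check that $Q$ is in fact a functor on $C$. The point is that the class of arrows factoring through $0$ via an arrow into $0$ outside $\finord$ is closed under postcomposition and under precomposition. Precomposition works because any arrow into $0$ outside $\finord-1$ stays outside $\finord$ after precomposition, given the relations $r\circ[g,h]=[rg,rh]$ and the fact that $\finord-1$ has no arrows into $0$ other than from $0$. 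Hence the composite of $\bar 0$ with anything is $\bar 0$, and of $1_2$ with $1_2$ is $1_2$, consistently.

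Naturality of $\alpha$ is automatic from Yoneda. Epimorphicity in $\set^C$ is checked componentwise, so it suffices to show that each $\alpha_n:C(2,n)\to 2$ is surjective. For value $0$, we use the arrow $p:2\to 0$, which is not in $\finord$, followed by the copoint-like arrow $!:0\to n$ (here $0$ is initial in $C$). Then $h=!\circ p:2\to n$ factors through $0$ by a non-$\finord$ arrow, so $\alpha_n(h)=0$. For value $1$, we need an arrow $2\to n$ that does \emph{not} factor through $0$ via a non-$\finord$ arrow. For $n\ge 2$ (recall $1\notin C$) we take an injection $2\to n$ in $\finord$. For $n=0$ there are no arrows $2\to 0$ in $\finord$, so this case needs care.

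The main obstacle is therefore this well-definedness issue: arrows in $C=D/R$ are equivalence classes, and we must show that an injection $2\hookrightarrow n$ in $\finord$ is not identified by the relations $R$ with some composite through $p$. The plan is to define an invariant on the generators, such as a ``passes through a non-$\finord$ arrow into $0$'' flag, or equivalently to verify directly that $Q$ respects every relation in $R$ (the triangle relations for $[g,h]$, $[g,h]!=!$, $r[g,h]=[rg,rh]$, $p!=1_0$, and the $\finord$ relations). Since functoriality of $Q$ is exactly this check, once $Q$ is shown to be a functor, $Q(\mathrm{inj})=1_2\ne\bar 0$ shows the injection is not such a composite. For $n=0$, $Q0=2$ while $\alpha_0(h)=0$ for every $h:2\to 0$ not in $\finord$, so surjectivity can fail. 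We would resolve this by checking whether the paper's convention makes $Q$ on $p$-type arrows $2\to0$ into $0$ ``through $1_0$'' equal to $1_2$. If not, the fallback is to obtain $\alpha$ as the induced coequalizer quotient, $C(2,-)\twoheadrightarrow \operatorname{im}$, and identify that image with $Q$ objectwise.
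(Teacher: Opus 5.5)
Your $\alpha$ (the Yoneda transform of $1\in Q2$) is exactly the paper's: $\alpha_n(g)=0$ iff $g$ factors through $0$. The paper only asserts that this $\alpha$ is natural and epic. The problem you flag at $n=0$ is real, and it is fatal rather than a matter of convention.

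There are no arrows $2\to 0$ in $\finord$. Every $h\in C(2,0)$ has the form $2\xrightarrow{h}0\xrightarrow{1_0}0$, because the only endomorphism of the initial object $0$ is $1_0$. So $Qh=\bar 0$ for all of them. Now take any family $\alpha$ satisfying the naturality squares; functoriality of $Q$ is not even needed. Then $\alpha_0(h)=\alpha_0(h\circ 1_2)=Qh(\alpha_2(1_2))=0$. So $\alpha_0\colon C(2,0)\to Q0=2$ is never surjective, and no epimorphism $C(2,-)\Rightarrow Q$ exists.

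Neither of your fallbacks helps. The definition of $Q$ treats all of $C(2,0)$ uniformly, so any re-reading of it still makes $\alpha_0$ constant. And the image of any $C(2,-)\Rightarrow Q$ has a single element at $0$, so it cannot be identified with $Q$.

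The functoriality check you planned also fails, so Yoneda is not available in the first place. You claim that precomposition keeps an arrow into $0$ outside $\finord$. This overlooks the adjoined relations $p\circ !=1_0$ and $[g,h]\circ !=!$, which give $Q(p)\circ Q(!_2)=\bar 0\neq 1_2=Q(1_0)$.

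Likewise, your step ``$1_2$ composed with $1_2$ is $1_2$'' requires the arrows not passing through $0$ to be closed under composition, and they are not. By Remark~\ref{R:cardinalityofC}, $[!p,f]\colon 4\to m$ does not pass through $0$ and $i_2\colon 2\to 4$ is in $\finord$. Yet $[!p,f]\circ i_2=!p$ does pass through $0$, and this same pair also breaks naturality of the paper's $\alpha$.

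So with $Q$ as defined the lemma cannot be proved, by your route or the paper's. A repair must change $Q$ itself, making it an actual functor with suitable values at $0$, not just the choice of $\alpha$.
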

\begin{proof}
Let $\alpha:C(2,-)\Rightarrow Q$ be defined as follows: given $n\in C$
\[\alpha_n(g):=\begin{cases}
0 &\text{if $g$ is an arrow $2\rightarrow 0\rightarrow n$,}\\
1 &\text{otherwise.}
\end{cases}\]
Let $h:n\rightarrow m$ be an arrow in $C$. It is clear that the following diagram commutes:
\[\xymatrix{
C(2,n)\ar[r]^(.57){\alpha_n}\ar[d]_{C(2,h)} & Qn\ar[d]^{Qh} \\
C(2,m)\ar[r]_(.57){\alpha_m} & Qm,
}\]
and $\alpha$ is epic.
\end{proof}

\begin{lema}\label{L:reptotheQequalsrep}
Consider the functor $Q$ in \ref{L:epicfromrep2toQ}. Then $C(2,-)^Q\cong C(2,-)$, and that natural isomorphism $\Phi:(2,-)^Q\rightarrow C(2,-)$ has as components
\[\xymatrix{
\Nat(C(n,-)\times Q,C(2,-))\ar[r]^(.72){\Phi_n} & C(2,n)
}\]
\[\qquad\qquad\qquad\qquad\qquad\quad\xymatrix{
\gamma\ar@{}[r]|(.37)\mapsto & \gamma_n(1_n,1).
}\]
\end{lema}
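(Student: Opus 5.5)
The plan is to construct an explicit inverse to $\Phi$ and then check naturality. Recall that, for covariant functors $C\to\set$, the exponential $C(2,-)^Q$ is computed pointwise by Yoneda, as in \ref{L:exponentialobjectinsetE}:
\[
C(2,-)^Q(n)\cong\Nat(C(n,-)\times Q,C(2,-)).
\]
Its functorial action along $f:n'\to n$ is precomposition with $C(f,-)\times Q$.

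\textbf{Naturality and the candidate inverse.} Naturality of $\Phi$ in $n$ is a direct check. For $\gamma$ at $n'$ and $f:n'\to n$, the transported transformation $\gamma'$ satisfies $\gamma'_n(1_n,1)=\gamma_n(f,1)$. Naturality of $\gamma$ along $f$, applied to $(1_{n'},1)$, gives
\[
\gamma_n(f,Qf(1))=f\circ\gamma_{n'}(1_{n'},1).
\]
Whenever $Qf=1_2$ this equals $C(2,f)(\Phi_{n'}\gamma)$. So the only case that needs care is when $Qf$ is the constant map $\bar 0$, and that case is handled by the independence claim below.

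As candidate inverse, send $g\in C(2,n)$ to the transformation $\gamma^g$ with
\[
\gamma^g_m(h,q):=h\circ g\qquad(h:n\to m,\ q\in Qm),
\]
which ignores $q$. It is natural because $C(2,-)$ acts by postcomposition. Clearly $\Phi_n(\gamma^g)=g$, so $\Phi_n$ is surjective.

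\textbf{The key step: injectivity of $\Phi_n$.} One must show that every $\gamma$ equals $\gamma^{g}$ for $g=\gamma_n(1_n,1)$. I would split according to $Qh$ for $h:n\to m$.

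\begin{enumerate}
\item If $Qh=1_2$, naturality applied to $(1_n,q)$ gives $\gamma_m(h,q)=h\circ\gamma_n(1_n,q)$.
\item If $Qh=\bar 0$, then $h$ factors as $n\xrightarrow{u}0\to m$ with $u\notin\finord$, and naturality gives $\gamma_m(h,q)=h\circ\gamma_n(1_n,0)$.
\end{enumerate}

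In either case everything reduces to the single equality
\[
\gamma_n(1_n,0)=\gamma_n(1_n,1)\quad\text{for every } n.
\]
This is where the idempotent-free, freely generated structure of $C$ from \ref{T:tinywithtwopoints} must be used.

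My first attempt would be to pick a non-$\finord$ copoint $u:n\to 0$. Such copoints exist for each $n$ via the adjoined arrows $p$ and $[g,h]$. Naturality along $u$ yields
\[
u\circ\gamma_n(1_n,1)=\gamma_0(u,0)=u\circ\gamma_n(1_n,0).
\]
I would then argue that the family of such $u$ is jointly monic on $C(2,n)$. The relations $r\circ[g,h]=[rg,rh]$ together with the copairing construction should let one recover an arrow $2\to n$ from its composites with the various $[\,\cdot\,,\cdot\,]:n\to0$, since different choices of the second component separate arrows.

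\textbf{Main obstacle and fallback.} The hardest step is this joint-monicity (cancellation) argument in the presented category $C$. It requires understanding normal forms of arrows in $D/R$, in the spirit of \ref{R:cardinalityofC}. If it fails, I would instead exploit the extra input $1\in Qm$ at objects $m$ reachable by $\finord$ arrows. Specifically, I would compare $\gamma_{2+n}$ on $(j,q)$, for $j:n\to 2+n$ the coproduct inclusion, with its value after postcomposing with a copairing that does not pass through $0$, as in the last paragraph of \ref{R:cardinalityofC}. This should force the $0$- and $1$-components to agree.
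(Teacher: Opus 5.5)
Your outline follows the paper's strategy. You compute $C(2,-)^Q(n)$ by Yoneda and get surjectivity of $\Phi_n$ from $g\mapsto\gamma^g$; that part, and the reduction of naturality of $\Phi$ to the case $Qf=\bar 0$, are fine. Everything else you reduce to the independence $\gamma_n(1_n,0)=\gamma_n(1_n,1)$, obtained by cancelling a non-$\finord$ copoint. The gap is that this cancellation, the only step with real content, is never established. You conjecture that the non-$\finord$ copoints are jointly monic on $C(2,n)$ and sketch a fallback, but give no argument about the presentation $D/R$. The paper supplies a specific cancellation fact here. For $n=2$ and $n\geq 4$, a single indecomposable copoint $h\colon n\to 0$ is left-cancellable on $C(2,n)$, since no relation in $R$ identifies $hr$ with $hs$ for $r\neq s$. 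For $n=3$, which has no indecomposable copoints, it uses a composite $3\to j'\to 0$ whose first factor is an injective $\finord$ map. Also, your claim that suitable copoints exist ``for each $n$'' fails at $n=0$: $C(0,0)=\{1_0\}$ and $Q(1_0)=1_2$, so your method does not start there.

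Second, your case (2) misapplies naturality. If $Qh=\bar 0$, then $(C(n,h)\times Qh)(1_n,q)=(h,0)$ for both values of $q$. So naturality only yields $\gamma_m(h,0)=h\circ\gamma_n(1_n,q)$, and the value $\gamma_m(h,1)$ is never reached. The paper treats it with a second cancellation through a bad arrow $h'\colon m\to 0\to k$, namely $h'\circ\gamma_m(h,1)=\gamma_k(h'h,0)=h'h\circ\gamma_n(1_n,0)$. You could repair this by applying the independence claim to the transported transformation $\gamma\circ(C(h,-)\times Q)$ at stage $m$, as your naturality paragraph tacitly does. But then independence is needed for all transformations at all stages, which again rests on the missing cancellation.

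Finally, be aware that the setting itself is shaky. Since $p\circ !_2=1_0$ in $C$, any functor with $Q0=Q2=2$ must send $p$ to a surjection, hence a bijection. So the value $Q(p)=\bar 0$ prescribed by the definition of $Q$ is incompatible with functoriality, and both your key naturality step and the paper's use it. This has to be fixed before any version of the cancellation argument can be made rigorous.
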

\begin{proof}
Let $\gamma\in\Nat(C(n,-)\times Q,C(2,-))$ and $h$ be an arrow in $C$ of the form $n\rightarrow 0\rightarrow m$ with $n\rightarrow 0$ not in $\finord$. Hence, by the naturality of $\gamma$,
\begin{align*}
\gamma_m(h,0) &=h\circ\gamma_n(1_n,1)\\
&=h\circ\gamma_n(1_n,0),
\end{align*}
with $\gamma_n(1_n,1),\gamma_n(1_n,0)\in C(2,n)$. If $\gamma_n(1_n,1)\neq\gamma_n(1_n,0)$ and $h$ is arbitrary, the equation
\[
h\circ\gamma_n(1_n,1)=h\circ\gamma_n(1_n,0)
\]
should hold for any indecomposable arrow $h:n\rightarrow 0$ (and the arrow $0\rightarrow m$ would be $1_0$). So, for $n=2$ or $n\geq 4$, one has indecomposable arrows $n\rightarrow 0$. Since there are no relations equalizing $hr=hs$ for $h$ indecomposable and $r,s$ different, $\gamma_n(1_n,1)=\gamma_n(1_n,0)$ for $n=2$ or $n\geq 4$.

If $n=3$, since there are no indecomposable arrows $3\rightarrow 0$, just arrows of the form $3\rightarrow j\rightarrow 0$ with $3\rightarrow j$ in $\finord$, then it could be possible to choose an arrow $3\rightarrow j$ in $\finord$ equalizing
\[\xymatrix{
2\ar@<.5ex>[r]^r\ar@<-.5ex>[r]_s & 3
}\]
with $r,s$ in $\finord$; otherwise $3\rightarrow j$ could not equalize them. But one can choose an injective arrow $3\rightarrow j'$ instead. Therefore $\gamma_3(1_3,1)=\gamma_3(1_3,0)$.

Now, let $h':m\rightarrow k$ be another arrow in $C$ of the form $m\rightarrow 0\rightarrow k$ with $m\rightarrow 0$ not in $\finord$. Hence, by the naturality of $\gamma$,
\[
h'h\circ\gamma_n(1_n,0)=\gamma_k(h'h,0)=h'\circ\gamma_m(h,1).
\]
Therefore, by a similar argument to the previous one, it follows that
\[
h\circ\gamma_n(1_n,0)=\gamma_m(h,1).
\]

Now, suppose $h$ is not of the form $n\rightarrow 0\rightarrow m$ with $n\rightarrow 0$ not in $\finord$. Then, by the naturality of $\gamma$,
\begin{align*}
\gamma_m(h,1)=h\circ\gamma_n(1_n,1)\\
\gamma_m(h,0)=h\circ\gamma_n(1_n,0).
\end{align*}
Therefore every natural transformation $C(n,-)\times Q\Rightarrow C(2,-)$ is determined by $\gamma_n(1_n,1)=\gamma_n(1_n,0)\in C(2,n)$. That is, $C(2,-)^Q(n)\cong C(2,n)$ and that bijection is the function $(\gamma\mapsto\gamma_n(1_n,1))$.

This bijection is natural in $n$. Indeed, let $h:n\rightarrow m$ be an arrow in $C$; the following diagram commutes:
\[\xymatrix{
\Nat(C(n,-)\times Q,C(2,-))\ar[r]^(.72){\Phi_n}\ar[d]_{\Nat(C(h,-)\times Q,C(2,-))} & C(2,n)\ar[d]^{C(2,h)} \\
\Nat(C(m,-)\times Q,C(2,-))\ar[r]_(.72){\Phi_m} & C(2,m),
}\]
since
\[
\Nat(C(h,-)\times Q,C(2,-))(\gamma)=\gamma\cdot C(h,-)\times 1_Q
\]
and
\[
(\gamma\cdot C(h,-)\times 1_Q)_m(1_m,1)=\gamma_m(h,1).
\]
\end{proof}

\begin{obse}\label{R:reptotheQequalsrep}
Consider the functor $Q$ in \ref{L:epicfromrep2toQ}. If one substitutes $k$ for 2 with $k\geq 3$ in the previous lemma, all the arguments also hold for the new case $k$, even for $k=3$, which is the pathological one. So $C(k,-)^Q\cong C(k,-)$ for $k\geq 3$ in $C$.
\end{obse}

\begin{lema}\label{L:QtotheQcardinality}
Consider the functor $Q$ in \ref{L:epicfromrep2toQ}. The cardinality of $Q^Q(n)$ is $2^\N$ for every $n\in C$.
\end{lema}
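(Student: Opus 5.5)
By Yoneda and the definition of the exponential in $\set^C$, $Q^Q(n)\cong\Nat(C(n,-)\times Q,Q)$. I will therefore count natural transformations $\gamma:C(n,-)\times Q\Rightarrow Q$ and show there are exactly $2^{\N}$ of them. The count splits into an upper bound and a lower bound.

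For the upper bound, note that $C$ has countably many arrows by \ref{R:cardinalityofC}. Hence $C(n,-)\times Q$ has a countable total set of elements $\coprod_m C(n,m)\times 2$. A natural transformation into $Q$ is a choice, for each element, of a value in $2$. So $|Q^Q(n)|\le 2^{\aleph_0}=2^{\N}$.

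For the lower bound I will exhibit $2^{\N}$ distinct natural transformations. The naturality constraint is $\gamma_m(hg,Qh(\epsilon))=Qh(\gamma_k(g,\epsilon))$ for $g:n\to k$ and $h:k\to m$. The key observation is that $Q$ acts as the identity on every arrow except those factoring as $k\to0\to m$ with $k\to0$ outside $\finord$, where it acts as $\bar0$. In particular, once an element $(g,\epsilon)$ is pushed along such an arrow, the value is forced to be $0$ and the second coordinate becomes $0$. So a natural transformation is essentially a free choice of values on elements $(g,\epsilon)$ that are not reachable through the collapsing arrows, subject only to compatibility along arrows on which $Q$ is the identity. I plan to take $\gamma_m(g,\epsilon)=\bar0$ on all $g$ that factor through $0$ via a non-$\finord$ arrow. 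On the remaining $g$, I will set $\gamma$ to be a function of the orbit of $g$ under postcomposition by non-collapsing arrows, restricted so that the values are compatible. To get infinitely many independent choices, I will use the countably many indecomposable arrows $k\to0$ produced inductively in \ref{R:cardinalityofC}, for instance those $4\to0$ of the form $[g,h]$. For each such arrow $u:n\to0$, I will let $\gamma_0(u,1)\in2$ be chosen independently. This requires checking that $\gamma_0(u,1)$ is not forced: the only arrows into $(u,1)$ come from $(u',\epsilon)$ with $u=vu'$ and $Q v=1_2$, and I set those compatibly. Since $Q(0\to m)$ for $\finord$ arrows is the identity, the values propagate consistently to $(\,(0\to m)u,1)$. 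Independent choices over a countably infinite family of generic indecomposables then give $2^{\N}$ transformations. For $n=3$, where there are no indecomposables $3\to0$, I will instead use the arrows $3\to j\to0$ with $j\ge4$ and $j\to0$ indecomposable, again countably many.

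The main obstacle is the lower bound for every $n$. I need to verify that the chosen family of arrows is genuinely free: no relation in $R$ identifies two of them, and no composite through a collapsing arrow forces $\gamma$ on them. This depends on the precise presentation of $C$ by generators and relations. I would argue it by noting that the relations in $S$ only rewrite $r\circ[g,h]$ as $[rg,rh]$ and $[g,h]!$ as $!$. Consequently, distinct formal copairings remain distinct, as already used in \ref{L:reptotheQequalsrep}.
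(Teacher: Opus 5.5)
\textbf{Where the proposal fails.} Your strategy is the paper's. You identify $Q^Q(n)$ with $\Nat(C(n,-)\times Q,Q)$, note that collapsing arrows force the value $0$ while the other arrows transport $\gamma_n(1_n,\epsilon)$, and treat the values $\gamma_0(u,1)$ at the non-$\finord$ arrows $u:n\to0$ as countably many free bits. Your countability upper bound is fine; the paper's remark $\N\times2^\N\cong2^\N$ plays the same role. The genuine gap is the freeness you flag as the main obstacle, and it does not just need checking: it fails.

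You only look at arrows \emph{into} $(u,1)$. In the category of elements, however, constraints propagate along zigzags. Take $u:n\to0$ indecomposable; for $u=v\circ f'$ with $f'$ in $\finord$, as in your $n=3$ case, use $v$ and $j_k\circ f'$ instead. Consider the copairing $c=[1_2,!_2u]:2+n\to2$, which is adjoined by construction.
\begin{itemize}
\item It does not pass through $0$, since otherwise $1_2=c\circ i_n$ would, giving $2\cong0$. Hence $Qc=1_2$.
\item The triangle relation gives $c\circ j_n=!_2u$.
\end{itemize}
Naturality along the $\finord$ arrows $!_2$ and $j_n$, and along $c$, then yields
\[\gamma_0(u,1)=\gamma_2(!_2u,1)=\gamma_2(c\circ j_n,1)=\gamma_{2+n}(j_n,1)=\gamma_n(1_n,1).\]
So every bit you intend to choose is forced. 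The propagation to $(!_2u,1)$ that you call consistent is exactly where it collides with the propagation from $(1_n,1)$.

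The relations you set aside are the culprits. You list only $r\circ[g,h]=[rg,rh]$ and $[g,h]!=!$, but the triangle relations $[g,h]\circ j_m=h$ and the relation $p\,!=1_0$ are what matter. They expose the deeper problem: $Q$ is not a functor.
\begin{itemize}
\item $Q(c)\circ Q(j_n)=1_2$, whereas $Q(!_2u)=\bar0$.
\item More simply, $p\circ!_2=1_0$, while $Q(p)\circ Q(!_2)=\bar0$.
\item In general, $0$ is initial in $C$, so every $u:n\to0$ satisfies $u\circ!_n=1_0$. Any functor with $Q0=Qn=2$ must therefore send $u$ to a split epi, i.e.\ a bijection, never to $\bar0$.
\end{itemize}
Consequently ``natural transformation $C(n,-)\times Q\Rightarrow Q$'' has no meaning for the $Q$ of the statement.

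\textbf{The case $n=0$.} This case is not covered by your argument, and it cannot be. $C(0,0)=\{1_0\}$ contains no non-$\finord$ arrow. Moreover $C(0,-)$ is terminal, so $Q^Q(0)\cong\Nat(Q,Q)$. Naturality along the arrows $!_m:0\to m$ (on which $Q$ is the identity) forces all components to equal $\gamma_0$. Naturality along $p$ forces $\gamma_0(0)=0$. This leaves at most the two transformations $1_Q$ and $\bar0$, even if naturality is read arrow by arrow. So the claimed cardinality $2^\N$ is false at $n=0$.

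The paper's own proof has both defects. It asserts countably many arrows $n\to0$ for every $n$, and it never verifies that the values $\gamma_0(h,1)$ are unconstrained. Any correct argument must begin by redefining $Q$ so that it is genuinely a functor and a quotient of $C(2,-)$.
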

\begin{proof}
Let $\gamma\in Q^Q(n)$ for $n\in C$; that is,
\[
\gamma\in\Nat(C(n,-)\times Q,Q).
\]
Let $h$ be an arrow in $C$. Then, since $\gamma$ is natural, the following diagram commutes:
\begin{equation}\label{D:naturalityofarrowinQtotheQ}
\vcenter{\xymatrix{
C(n,n)\times Qn\ar[r]^(.65){\gamma_n}\ar[d]_{C(n,h)\times Qh} & Qn\ar[d]^{Qh} \\
C(n,m)\times Qm\ar[r]_(.65){\gamma_m} & Qm.
}}\end{equation}
Hence, for $h$ of the form $n\rightarrow 0\rightarrow m$ with $n\rightarrow 0$ not in $\finord$,
\[
\gamma_m(h,0)=0.
\]
From the commutativity of \eqref{D:naturalityofarrowinQtotheQ}, for $h$ not of that form
\[
\gamma_m(h,1)=\gamma_n(1_n,1)\qquad\text{and}\qquad\gamma_m(h,0)=\gamma_n(1_n,0).
\]
There remains the case $\gamma_m(h,1)$ for $h$ of the form $n\rightarrow 0\rightarrow m$ with $n\rightarrow 0$ not in $\finord$. Let $h'$ be an arrow $m\rightarrow k$ in $C$. So the following diagram commutes:
\begin{equation}\label{D:h1hnarrow0arrowm}
\vcenter{\xymatrix{
C(n,m)\times Qm\ar[r]^(.65){\gamma_m}\ar[d]_{C(h,h')\times Qh'} & Qm\ar[d]^{Qh'} \\
C(n,k)\times Qk\ar[r]_(.65){\gamma_k} & Qk.
}}\end{equation}
Hence if $h'$ is not of the form $m\rightarrow 0\rightarrow k$ with $m\rightarrow 0$ not in $\finord$, by the commutativity of \eqref{D:h1hnarrow0arrowm},
\begin{equation}\label{E:previousarrowdeterminesnextone}
\gamma_k(h'h,1)=\gamma_m(h,1),
\end{equation}
and if $h'$ is of that form, one has a previous case: $\gamma_k(h'h,0)=0$. Therefore, by \eqref{E:previousarrowdeterminesnextone}, the arrows $h:n\rightarrow 0$ determine the value of $\gamma_m(n\rightarrow 0\rightarrow m,1)$ for the arrows $n\rightarrow 0\rightarrow m$. For example, for $p:2\rightarrow 0$ and arrows $!_m:0\rightarrow m$ for each $m\in C$, one has
\[
\gamma_m(!_mp,1)=\gamma_0(p,1).
\]
Now, there is a countably infinite number of arrows $n\rightarrow 0$. Hence there are $2^\N$ possibilities for $\gamma_0(h,1)$ for $h:n\rightarrow 0$. Therefore
\[
|Q^Q(n)|=2^\N
\]
for every $n\in C$.

Notice that an absolute freedom for the choice of the components of $\gamma$ would imply that
\[
|Q^Q(n)|=\N\times 2^\N,
\]
but $\N\times 2^\N\cong 2^\N$.
\end{proof}
\begin{proof}[Proof of \ref{T:Nonatomicquotient}]
By Theorem III.7.1 in \citet{MR0354798}, $Q$ is the colimit of represetable functors $C(n,-)$ for objects $n\in C$. If $Q$ is atomic, then $(-)^Q$ preserves colimits. Hence, by \ref{L:reptotheQequalsrep} and \ref{R:reptotheQequalsrep}, it would be true that
\[
Q^Q\cong Q.
\]
However, by \ref{L:QtotheQcardinality}, $Q^Q(n)$ has cardinality $2^\N$, but $Qn$ has cardinality 2, which is a contradiction. Therefore $Q$ is not atomic.
\end{proof}
\bibliographystyle{plainnat}
\bibliography{../ref}
\end{document}